\newtheorem*{theorem*}{Theorem}
\newtheorem{theorem}{Theorem}[section]
\newtheorem{lemma}[theorem]{Lemma}
\newtheorem{corollary}[theorem]{Corollary}
\newtheorem{remark}[theorem]{Remark}
\newtheorem{definition}[theorem]{Definition}
\def\Diff{\mathrm{Diff}}
\def\L{\mathcal{L}}
\def\U{\mathcal{U}}
\def\V{\mathcal{V}}
\def\W{\mathcal{W}}
\author{
  Andrew Clarke\\
  \texttt{andrew.michael.clarke@upc.edu}
  \and
  Jacques Fejoz\\
  \texttt{jacques.fejoz@dauphine.fr}
  \and
  Marcel Gu\`ardia\\
  \texttt{marcel.guardia@upc.edu}
}
\title{Topological Shadowing Methods in Arnold Diffusion: Weak Torsion and Multiple Time Scales}
\begin{document}
\maketitle
\begin{abstract}
  Consider a symplectic map which possesses a normally hyperbolic invariant manifold of any even dimension with transverse homoclinic channels. We develop a topological shadowing argument to prove the existence of Arnold diffusion along the invariant manifold, shadowing some iterations of the inner dynamics carried by the invariant manifold and the outer dynamics induced by the stable and unstable foliations. In doing so, we generalise an idea of Gidea and de la Llave in \cite{gidea2006topological}, based on the method of correctly aligned windows and a so-called transversality-torsion argument. Our proof permits that the dynamics on the invariant manifold satisfy only a non-uniform twist condition, and, most importantly for applications, that the splitting of separatrices be small in certain directions and thus the associated drift in actions very slow; diffusion occurs in the directions of the manifold having non-small splitting. Furthermore we provide estimates for the diffusion time.
\end{abstract}

\section{Introduction}

Arnold diffusion, as first exposed in~\cite{Arnold:1964} has become a major subject of study for nearly integrable Hamiltonian systems. This mechanism epitomises how an integrable (``stable'') system can become unstable, with actions varying slowly (as permitted by Nekhoroshev's theorem) but substantially, as opposed to what would happen for an integrable perturbation~\cite{Bolotin:1999, Delshams:2000}. A key idea has been to focus on a normally hyperbolic invariant cylinder. The cylinder is indeed a more robust object than individual hyperbolic invariant tori that may lie inside the cylinder and which Arnold initially used. Another key addition to Arnold's mechanism is the random iteration of the dynamics carried by the cylinder and the ``outer'' dynamics obtained at the limit by following unstable and stable leaves of the cylinder (see figure~\ref{figure_homoclinicchannel}). Both these ideas appear in Moeckel's work~\cite{Moeckel:2002}.
These ideas were formalised by  Delshams, de la Llave and Seara with the invention of the scattering map which encodes the outer dynamics. This allowed them to solve the large gap problem, when two (primary) invariant tori are too far away for their unstable and stable manifolds to meet \cite{Delshams:2006a}. In these last decades, this approach has been successfully implemented by a number of authors, using either a geometric description~\cite{Delshams:2006a,Delshams:16,Gelfreich:2008,GelfreichTuraev2017,Treschev:2004, Treschev:2012} or a variational one~\cite{Cheng:2004,Cheng:2009}.

Another step consists in dealing with the a priori stable case, where the normally hyperbolic cylinder appears with the perturbation itself~\cite{Bernard08,Kaloshin:2016,Cheng:2017,Cheng:2009,Kaloshin:2020}. In this article we deal with the a priori unstable case only. 

One of the main difficulties lies in proving the existence of orbits shadowing random iterations of the inner and outer maps. Shadowing results can be achieved using different tools: variational methods (see, for instance, \cite{Berti2,Berti1,Bessi96}), modern versions of the Lambda lemma  \cite{FontichM00, gidea2020general,Sabbagh15}, or with topological techniques such as the  the correctly aligned windows method.

The advantge of the topological methods is that they require relatively little information regarding the dynamics on the normally hyperbolic invariant cylinder. In particular, no knowlege of invariant quasiperiodic tori is expected. As far as the authors know, correctly aligned windows were imagined by Conley and Easton \cite{Conley:71}, and the first application of the correctly aligned windows method to Arnold diffusion problems is the paper \cite{gidea2006topological}. In this paper, Gidea and de la Llave use the method of correctly aligned windows to prove the existence of diffusion orbits in a priori unstable Hamiltonian systems and to construct orbits with an unbounded growth of energy for the Mather problem (that is a geodesic flow with a time dependent potential). This second model is usually said to be \emph{a priori chaotic}. In both settings the normally hyperbolic invariant cylinder is two-dimensional. Moreover, they assume that the induced dynamics on the cylinder satisfies a twist property, and the twist is uniform with respect to the perturbative parameter.

The purpose of this paper is to generalise \cite{gidea2006topological} from several points of view:
\begin{itemize}
\item The normally hyperbolic cylinder may be of any (even) dimension.
\end{itemize}
More importantly:
\begin{itemize}
\item The twist property satisfied by the inner dynamics may be weak, i.e. the twist may vanish when the perturbative parameter goes to 0.
\item We may split the actions between two groups, faster actions and slower actions, and ignore the latter. Indeed, proving ``partial transversality'' of the  invariant manifolds of the normally hyperbolic cylinder along the fast ones is enough to achieve drift in these directions whereas the slow directions can be treated  as a black box, with no control of their instability rate. 
\end{itemize}

These improvements are crucial since such behaviour is exhibited naturally in
physical models, particularly in models with multiple time scales as happens often, for instance, 
in Celestial Mechanics. 

Many of the known shadowing mechanisms rely on rather strong assumptions, both on the inner dynamics in the cylinder and on the transversality of the associated stable and unstable invariant foliations. Often such hypotheses are difficult to verify when  multiple time scales are present. The results presented in this paper are quite flexible and can be applied to rather general multiple time scale settings.

In particular, consider an analytic nearly integrable Hamiltonian system with multiple time scales such that some of the angles perform fast non-resonant rotation and therefore its conjugate actions are very slow whereas some other angles are not fast. It is well known that, in the fast directions, the transversality between the invariant manifolds is exponentially small and therefore very difficult to analyze. At the same time, thanks to averaging theory, one can make the dynamics in the conjugate actions much slower. The results in this paper allow us to obtain diffusing orbits along the actions conjugated to the slow angles even if one does not have ``full transversality'' of the invariant manifolds, i.e. no transverality in the fast directions (see Theorem \ref{theorem_main2} below). 

All the improvements achieved in the present papers are needed to construct diffusing orbits in the 4 Body Problem along secular resonances. In the companion paper \cite{Clarke:22}, the authors prove the
existence of orbits of the 4 body problem in both the planetary regime (one massive body and three bodies with small mass) and hierarchical regime (bodies increasingly separated) such that some of the bodies have osculating eccentricites and some
mutual inclinations drifting with no constraint. This is the first analytical result
of unstable motions in an $N$ body problem in the planetary regime. 

Secular resonances are those given by the secular angles, that is the angles which are constant for the two body problem: the argument of the perihelion and the longitude of the ascending node of each of the bodies. Such angles are much slower than the mean anomalies of the bodies which perform fast rotation. Therefore, we are exactly in the multiple time-scale setting described above. In \cite{Clarke:22}, we are able to construct diffusing orbits in the actions conjugated to the secular angles, i.e. the osculating eccentricities and mutual inclinations of the bodies, without having to control the dynamics on semimajor axis directions.

\subsection*{Acknowledgments}
A. Clarke and M. Guardia are supported by the European Research Council (ERC) under the European Union's Horizon 2020 research and innovation programme (grant agreement No. 757802). M. Guardia is also supported by the Catalan Institution for Research and Advanced Studies via an ICREA Academia Prize 2019. This work is also supported by the Spanish State Research Agency, through the Severo Ochoa and María de Maeztu Program for Centers and Units of Excellence in R\&D (CEX2020-001084-M).

This work is also partially supported by the project of the French Agence Nationale pour la Recherche CoSyDy (ANR-CE40-0014).

\section{Set-up, Assumptions, and Results}

\subsection{Definitions}\label{sec_prelimdefns}

Let $M$ be a $C^r$ manifold of dimension $d$ where $r \geq 1$. Let $F \in \Diff^1(M)$, and let $\Lambda$ be a submanifold of $M$. 
\begin{definition}
We call $\Lambda$ a \emph{normally hyperbolic invariant manifold} for $F$ if $\Lambda$ is $F$-invariant, and there are
\begin{equation}\label{eq_hyperbolicityparameters1}
0 < \lambda_- < \lambda_+ < \lambda_0 < 1 < \mu_0 < \mu_- < \mu_+
\end{equation}
and an invariant splitting of the tangent bundle
\begin{equation}
T_{\Lambda} M = TM \oplus E^s \oplus E^u
\end{equation}
such that:
\begin{equation}\label{eq_normalhyperbolicitydef}
\begin{split}
\lambda_-^n \| v \| \leq \| D F^n (x) v \| \leq \lambda_+^n \| v \|  & \textrm{ for all } x \in \Lambda, v \in E^s_x, n \in \mathbb{N} \\
\mu_-^n \| v \| \leq \| D F^n (x) v \| \leq \mu_+^n \| v \| &  \textrm{ for all } x \in \Lambda, v \in E^u_x, n \in \mathbb{N} \\
\lambda_0^{|n|} \| v \| \leq \| D F^n (x) v \| \leq \mu_0^{|n|} \| v \| & \textrm{ for all } x \in \Lambda, v \in T_x \Lambda, n \in \mathbb{Z}.
\end{split}
\end{equation}
\end{definition}

The results presented in this paper also apply to the case where $\Lambda$ is a manifold with boundary, and indeed the case where $\Lambda$ is only \emph{locally} invariant: this means that there is a neighbourhood $V$ of $\Lambda$ such that orbits of points in $\Lambda$ stay in $\Lambda$ until they leave $V$. In the latter case orbits may escape through the boundary. 

This definition guarantees the existence of stable and unstable invariant manifolds $W^{s,u} (\Lambda)\subset M$ defined as follows. The local stable manifold $W^{s}_{\mathrm{loc}}(\Lambda)$ is the set of points in a small neighbourhood of $\Lambda$ whose forward iterates never leave the neighbourhood, and tend exponentially to $\Lambda$. The local unstable manifold $W^{u}_{\mathrm{loc}}(\Lambda)$ is the set of points in the neighbourhood whose backward iterates stay in the neighbourhood and tend exponentially to $\Lambda$.  We then define
\begin{equation}
W^s(\Lambda) = \bigcup_{i=0}^{\infty} F^{-i} \left( W^{s}_{\mathrm{loc}}(\Lambda) \right), \quad W^u(\Lambda) = \bigcup_{i=0}^{\infty} F^{i} \left( W^{u}_{\mathrm{loc}}(\Lambda) \right).
\end{equation}
On the stable and unstable manifolds we have the strong stable and strong unstable foliations, the leaves of which we denote by $W^{s,u}(x)$ for $x \in \Lambda$. For each $x \in \Lambda$, the leaf $W^s(x)$ of the strong stable foliation is tangent at $x$ to $E^s_x$, and the leaf $W^u(x)$ of the strong unstable foliation is tangent at $x$ to $E^u_x$. Moreover the foliations are invariant in the sense that $F^i \left( W^s (x) \right) = W^s \left( F^i (x) \right)$ and $F^i \left( W^u (x) \right) = W^u \left( F^i (x) \right)$ for each $x \in \Lambda$ and $i \in \mathbb{Z}$. We thus define the \emph{holonomy maps} $\pi^{s,u} : W^{s,u} (\Lambda) \to \Lambda$ to be projections along leaves of the strong stable and strong unstable foliations. That is to say, if $x \in W^s (\Lambda)$ then there is a unique $x_+ \in \Lambda$ such that $x \in W^s(x_+)$, and so $\pi^s(x)=x_+$. Similarly, if $x \in W^u (\Lambda)$ then there is a unique $x_- \in \Lambda$ such that $x \in W^u(x_-)$, in which case $\pi^u(x)=x_-$.

Now, suppose that $x \in \left(W^s(\Lambda) \pitchfork W^u (\Lambda)\right) \setminus \Lambda$ is a transverse homoclinic point such that $x \in W^s(x_+) \cap W^u(x_-)$. We say that the homoclinic intersection at $x$ is \emph{strongly transverse} if
\begin{equation}
\begin{split} \label{eq_strongtransversality}
T_x W^s (x_+) \oplus T_x \left( W^s(\Lambda) \cap W^u(\Lambda) \right) = T_x W^s (\Lambda), \\
T_x W^u (x_-) \oplus T_x \left( W^s(\Lambda) \cap W^u(\Lambda) \right) = T_x W^u (\Lambda).
\end{split}
\end{equation}
In this case we can take a sufficiently small neighbourhood $\Gamma$ of $x$ in $W^s(\Lambda) \cap W^u(\Lambda)$ so that \eqref{eq_strongtransversality} holds at each point of $\Gamma$, and the restrictions to $\Gamma$ of the holonomy maps are bijections onto their images. We call $\Gamma$ a \emph{homoclinic channel} (see Figure \ref{figure_homoclinicchannel}). We can then define the scattering map as follows \cite{delshams2008geometric}.

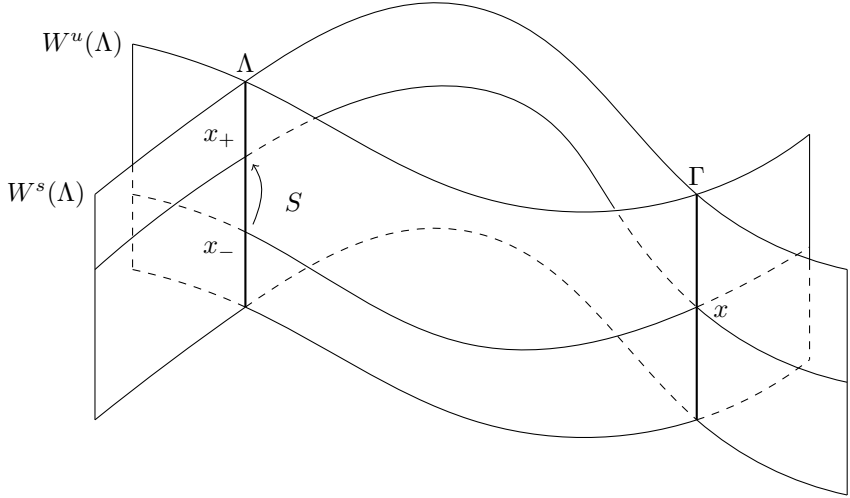
\begin{figure}
  \begin{tikzpicture}[use Hobby shortcut]

    \draw (0,0) .. (2,1.5) .. (5,2.5) .. (8,0) .. (10,-1);
    \draw (0,0) node[anchor=east] {$W^s(\Lambda)$};
    \draw (0,-3) .. (2,-1.5) .. ([blank=soft]5,-0.5) .. ([blank=soft]8,-3) .. (10,-4);
    \draw[dashed,use previous Hobby path={invert soft blanks,disjoint}]; 
    \draw (0.5,2) .. (2,1.5) .. (5,0) .. (8,0) .. (9.5,0.8);
    \draw (0.5,2) node[anchor=east] {$W^u(\Lambda)$};
    \draw (0.5,-1) .. ([blank=soft]2,-1.5) .. (5,-3) .. (8,-3)
    .. ([blank=soft]9.5,-2.2); 
    \draw[dashed,use previous Hobby path={invert soft blanks,disjoint}]; 
    \draw (0,0) -- (0,-3);
    \draw (10,-1) -- (10,-4);
    \draw (0.5,2) .. (0.5,0.4) .. ([blank=soft]0.5,-1);
    \draw[dashed,use previous Hobby path={invert soft blanks,disjoint}]; 
    \draw (9.5,0.8) .. (9.5,-0.9) .. ([blank=soft]9.5,-2.2);
    \draw[dashed,use previous Hobby path={invert soft blanks,disjoint}]; 
    \draw[thick] (2,-1.5) -- (2,1.5) node[anchor=south] {$\Lambda$};
    \draw[thick] (8,-3) -- (8,0) node[anchor=south] {$\Gamma$};
    \draw (0,-1) .. (2,0.5) .. ([blank=soft]2.9,1) .. (6,1)
    .. (6.85,-0.1) .. ([blank=soft]8,-1.5) .. (10,-2.5);
    \draw[dashed,use previous Hobby path={invert soft blanks,disjoint}]; 
    \draw (0.5,0) .. ([blank=soft]2,-0.5) .. (5,-2) .. (8,-1.5) .. ([blank=soft]9.5,-0.7);
    \draw[dashed,use previous Hobby path={invert soft blanks,disjoint}]; 
    \draw (2,-0.5) node[anchor=north east] {$x_-$};
    \draw (2,0.5) node[anchor=south east] {$x_+$};
    \draw[->] (2.1,-0.4) .. (2.4,-0.1) node [anchor=west] {$S$} .. (2.1,0.4);
    \draw (8.1,-1.55) node[anchor=west] {$x$};
  \end{tikzpicture}
  \caption{The scattering map $S$ takes a point $x_- \in \Lambda$, follows the unique leaf of the strong unstable foliation passing through $x_-$ to the point $x$ in the homoclinic channel $\Gamma$, and from there follows the unique leaf of the strong stable foliation passing through $x$ to the point $x_+$ on $\Lambda$.} \label{figure_homoclinicchannel}
\end{figure}

\begin{definition}
Let $y_-\in \pi^u \left( \Gamma \right)$, let $y = \left(\left. \pi^u \right|_{\Gamma} \right)^{-1} (y_-)$, and let $y_+ = \pi^s(y)$. The \emph{scattering map} $S : \pi^u (\Gamma) \to \pi^s (\Gamma)$ is defined by
\begin{equation}
S = \pi^s \circ \left( \pi^u \right)^{-1} : y_- \longmapsto y_+.
\end{equation}
\end{definition}

Suppose now that the smoothness $r$ of $M$ and $F$ is at least $2$, suppose the normally hyperbolic invariant manifold $\Lambda$ is a $C^r$ submanifold of $M$, and suppose instead of condition \eqref{eq_hyperbolicityparameters1} we have the stronger condition
\begin{equation}\label{eq_hyperbolicityparameters2}
0 < \lambda_- < \lambda_+ < \lambda_0^r < 1 < \mu_0^r < \mu_- < \mu_+
\end{equation}
on the hyperbolicity parameters. This \emph{large spectral gap condition} implies $C^{r-1}$ regularity of the strong stable and strong unstable foliations \cite{hirsch1970invariant}, which in turn implies that the scattering map $S$ is $C^{r-1}$ \cite{delshams2008geometric}.

\begin{remark}
In general, the scattering map is not globally defined. The homoclinic intersection of invariant manifolds can give rise to very complicated domains of definition of the scattering map, and the general case is that there are many branches $S_{\alpha}$ of the map defined on sets $U_j \subset \Lambda$. The sets $U_j$ may or may not overlap, and the scattering map may have singularities on $\partial U_j$. While the results of this paper apply to the general case, we simply write $S: U \to \Lambda$ to denote the scattering map to avoid awkward notation. 
\end{remark}

Suppose the map $F$ depends smoothly on a small parameter $\epsilon$. We point out that all objects defined thus far may vary with $\epsilon$, but, to simplify notation, we do not explicitly write $\epsilon$ as a subscript or argument. Indeed, as $\epsilon$ varies, the perturbed ($\epsilon>0$) normally hyperbolic invariant manifold can be written as a graph over the unperturbed ($\epsilon=0$) manifold as a result of Fenichel theory, and so we can continue to use the coordinates from the original unperturbed manifold \cite{fenichel1971persistence,fenichel1974asymptotic,fenichel1977asymptotic}. 

Suppose the scattering map $S$ is defined relative to a homoclinic channel $\Gamma$ for all sufficiently small $\epsilon >0$. We allow for the possibility that the angle between $W^{s,u}(\Lambda)$ along the homoclinic channel $\Gamma$ goes to 0 as $\epsilon \to 0$. Denote by $\alpha (v_1, v_2)$ the angle between two vectors $v_1, v_2$ in the direction that yields the smallest result (i.e. $\alpha (v_1,v_2) \in [0, \pi]$). For $x \in \Gamma$, let 
\begin{equation}
\alpha_{\Gamma} (x) = \inf \alpha (v_+, v_-)
\end{equation}
where the infimum is over all $v_+ \in T_x W^s (\Lambda)^{\perp}$ and $v_- \in T_x W^u (\Lambda)^{\perp}$ such that $\| v_{\pm} \| = 1$. 
\begin{definition}
For $\sigma \geq 0$, we say that \emph{the angle of the splitting along $\Gamma$ is of order $\epsilon^{\sigma}$} if there is a positive constant $C$ (independent of $\epsilon$) such that
\begin{equation}
\alpha_{\Gamma} (x) \geq C \epsilon^{\sigma}
\end{equation}
for all $x \in \Gamma$.
\end{definition}

Suppose now that the normally hyperbolic invariant manifold $\Lambda$ is diffeomorphic to $\mathbb{T}^n \times [0,1]^n$, and denote by $(q,p) \in \mathbb{T}^n \times [0,1]^n$ smooth coordinates on $\Lambda$. Suppose the maps $F$, and thus $f \coloneqq F|_{\Lambda}$, depend on the small parameter $\epsilon$.
\begin{definition}\label{def_nearlyintegrabletwist}
We say that $f: \Lambda \to \Lambda$ is a \emph{near-integrable twist map} if there is some $k \in \mathbb{N}$ such that
\begin{equation}\label{eq_inttwistmap}
f:
\begin{cases}
\bar{q} = q + g(p) + O(\epsilon^k) \\
\bar{p} = p + O(\epsilon^k)
\end{cases}
\end{equation}
where
\begin{equation}
\det D g (p) \neq 0
\end{equation}
for all $p \in [0,1]^n$, and where the higher order terms are uniformly bounded in the $C^1$ topology. If the higher order terms are 0 then $f$ is an \emph{integrable twist map}.
\end{definition}

\begin{remark}
The assumption that $\det Dg(p) \neq 0$ is sometimes referred to as a \emph{local} twist property; see for example Section 4, Chapter 23 of \cite{gole2001symplectic}. Note that we do not require convexity. 
\end{remark}

It follows from the definition that if $f : \Lambda \to \Lambda$ is a near-integrable twist map, then there exist twist parameters $T_+ > \widetilde{T}_- >0$ such that 
\begin{equation}\label{eq_twistcondition}
\widetilde{T}_- \| v \| \leq \left\| D g(p) v \right\| \leq T_+ \| v \|
\end{equation}
for all $p \in [0,1]^n$ and all $v \in \mathbb{R}^n$. We can always choose $T_+$ to be independent of $\epsilon$. Our formulation of the problem allows the parameter $\widetilde{T}_-$ to depend on $\epsilon$: there is $\tau \in \mathbb{N}_0$ and a strictly positive constant $T_-$ (independent of $\epsilon$) such that $\widetilde{T}_- = \epsilon^{\tau} T_-$. 
\begin{definition}
Suppose $f : \Lambda \to \Lambda$ is a near-integrable twist map. Denote by $T_+ > \widetilde{T}_- = \epsilon^{\tau} T_- >0$ the twist parameters. We say that $f$ satisfies:
\begin{itemize}
\item
A \emph{uniform twist condition} if $\tau=0$; 
\item
A \emph{non-uniform twist condition (of order $\epsilon^{\tau}$)} if $\tau>0$, and the order $\epsilon^k$ of the error terms in the definition of the near-integrable twist map $f$ is such that $k > \tau$.
\end{itemize}
\end{definition}

In the coordinates $(q,p)$, we may define a foliation of $\Lambda$, the leaves of which are given by
\begin{equation}\label{eq_foliationleaves}
\L (p^*) = \left\{ (q,p) \in \Lambda : p=p^* \right\}.
\end{equation}
If $f : \Lambda \to \Lambda$ is a near-integrable twist map in the sense of Definition \ref{def_nearlyintegrabletwist}, then each leaf of the foliation is almost invariant under $f$, up to terms of order $\epsilon^k$, where $k$ is as in Definition \ref{def_nearlyintegrabletwist}.

Suppose we have a scattering map $S$ defined on an open set $U$ in $\Lambda$, and suppose the large spectral gap condition \eqref{eq_hyperbolicityparameters2} holds, so $S$ is $C^1$.

\begin{definition}
We say that the scattering map $S$ is \emph{transverse to leaves along leaves (with respect to the leaves \eqref{eq_foliationleaves} of the foliation of $\Lambda$)} if for all $p_0^* \in [0,1]^n$ there is $c > 0$ (which may depend on $\epsilon$) and $p_1^* \in [0,1]^n$ such that for any $p^* \in [0,1]^n$ satisfying 
\begin{equation} \label{eq_pnbhdfortransversality}
\| p^* - p_1^* \| < c
\end{equation}
we have
\begin{equation}
S \left( \L (p_0^*) \cap U \right) \cap \L (p^*) \neq \emptyset
\end{equation}
and $S \left( \L (p_0^*) \cap U \right)$ is transverse to $\L (p^*)$ at some point where they intersect. In this case we say that \emph{the angle of transversality is of order $\epsilon^{\upsilon}$} if there are positive constants $C,c_*$ (independent of $\epsilon$) such that
\begin{equation}
\inf \alpha (v_0,v) \geq C \epsilon^{\upsilon}, \quad c=\epsilon^{\upsilon} c_*
\end{equation}
where the infimum is taken over all $v_0 \in T_x S \left( \L (p_0^*) \cap U \right)$ and $v \in T_x \L (p^*)$ such that $\| v_0 \| = \| v \| = 1$, for some $x \in S \left( \L (p_0^*) \cap U \right) \cap \L (p^*)$, as well as for each $p_0^* \in [0,1]^n$ and each $p^* \in [0,1]^n$ satisfying \eqref{eq_pnbhdfortransversality}.
\end{definition}

\subsection{Statement of Theorem \ref{theorem_main1}}

Let $M$ be a $C^r$ manifold of dimension $2(m+n)$ where $r \geq 4$ and $m,n \in \mathbb{N}$. Suppose $F \in \Diff^4 (M)$ has a normally hyperbolic invariant manifold $\Lambda \subset M$ of dimension $2n$ satisfying the large spectral gap condition \eqref{eq_hyperbolicityparameters2}. Suppose $\dim W^s (\Lambda) = \dim W^u(\Lambda)=m+2n$. Suppose $F$ depends on a small parameter $\epsilon$. We make the following further assumptions. 
\begin{enumerate}[{[}{A}1{]}]
\item
The stable and unstable manifolds $W^{s,u} (\Lambda)$ have a strongly transverse homoclinic intersection along a homoclinic channel $\Gamma$, and so we have an open set $U \subseteq \Lambda$ and a scattering map $S : U \to \Lambda$. The angle of the splitting along $\Gamma$ is of order $\epsilon^{\sigma}$. 
\item
The inner map $f = F|_{\Lambda}$ is a near-integrable twist map with error terms of order $\epsilon^k$ satisfying a non-uniform (or uniform) twist condition of order $\epsilon^{\tau}$. 
\item
The scattering map $S$ is transverse to leaves along leaves (with respect to the leaves \eqref{eq_foliationleaves} of the foliation of $\Lambda$), and the angle of transversality is of order $\epsilon^{\upsilon}$. 
\end{enumerate}

\begin{theorem} \label{theorem_main1} 
Fix $\eta >0$, let $\epsilon > 0$ be sufficiently small, and suppose
\begin{equation} \label{eq_kavgcondition}
k \geq 2 \left( \rho + \tau \right) + 1
\end{equation}
where
\begin{equation} \label{eq_rhodef}
\rho = \max \{2 \sigma,  2\upsilon, \tau \}.
\end{equation}
Choose $\{ p_j^* \}_{j=1}^{\infty} \subset [0,1]^n$ such that 
\begin{equation}
S \left( \L_j \cap U \right) \cap \L_{j+1} \neq \emptyset,
\end{equation}
and $S \left( \L_j \cap U \right)$ is transverse to $\L_{j+1}$, where $\L_j = \L (p_j^*)$. Suppose the distance between $\L_j$ and $\L_{j+1}$ is of order $\epsilon^{\upsilon}$ for each $j$. Then there are $\{z_i\}_{i=1}^{\infty} \subset M$ and $n_i \in \mathbb{N}$ such that 
\begin{equation}
z_{i+1} = F^{n_i} (z_i)
\end{equation}
and 
\begin{equation}
d ( z_i, \L_i) < \eta.
\end{equation}
Moreover, the time to move a distance of order 1 in the $p$-direction is bounded from below by a term of order
\begin{equation} \label{eq_timeestimate1}
\epsilon^{- \rho - \tau - \upsilon}.
\end{equation}
\end{theorem}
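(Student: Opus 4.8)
The plan is to build the shadowing orbit by concatenating finitely-many pieces, alternating between iterates of the inner map $f = F|_\Lambda$ and excursions along the homoclinic channel $\Gamma$ that realise the scattering map $S$, and to organise the whole argument through the method of correctly aligned windows. First I would introduce, near each leaf $\L_j$, a window (a homeomorphic image of a product of cubes) $W_j$ in $M$ whose ``exit'' and ``entry'' directions are adapted to the hyperbolic splitting $TM \oplus E^s \oplus E^u$ along $\Lambda$ together with the twist direction inside $\Lambda$: the expanding directions of the window should include $E^u$ and the ``horizontal'' ($q$-type) direction on $\Lambda$ along which the twist $Dg$ acts, while the contracting directions include $E^s$ and the ``vertical'' ($p$-type) action direction. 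The key topological fact I would invoke is that correct alignment of windows is preserved under composition, and that a nested sequence of correctly aligned windows contains a point whose orbit visits all of them; so it suffices to verify two local alignment statements and then chain them.

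The first local statement is the \emph{inner} one: some number $N_j$ of iterates of $f$ correctly align a suitable window near $\L_j$ with another window near $\L_j$ that is ``stretched'' in the $q$-direction. This is exactly where the twist enters. Because $f$ is a near-integrable twist map, a thin strip transverse to the leaves $\L(p^*)$ gets sheared by an amount proportional to $N_j \cdot \widetilde T_- = N_j \epsilon^\tau T_-$ in the $q$-direction, so after $N_j \sim \epsilon^{-\tau}$ iterates (times a large constant) the image wraps around $\mathbb{T}^n$ enough to cover, in the $q$-direction, a full window; meanwhile the $O(\epsilon^k)$ error terms must not destroy this, which forces $k$ to beat $\tau$ and ultimately leads to the condition \eqref{eq_kavgcondition} with $\rho$ as in \eqref{eq_rhodef}. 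The hyperbolic normal directions are handled automatically: $E^u$ expands, $E^s$ contracts, so correct alignment in the normal bundle is for free up to controlling the (small, order $\epsilon^\sigma$) tilt of $W^{s,u}(\Lambda)$, which is why $\sigma$ appears in $\rho$. The second local statement is the \emph{outer} one: a window near $\L_j$ that has been stretched in $q$ is correctly aligned, under a bounded number of iterates of $F$ following leaves through $\Gamma$, with a window near $S(\L_j\cap U) \supset$ a piece of $\L_{j+1}$. Here transversality of $S(\L_j\cap U)$ to $\L_{j+1}$, with angle of order $\epsilon^\upsilon$, guarantees that the stretched image genuinely crosses $\L_{j+1}$ in the action direction by an amount of order $\epsilon^\upsilon$ (the prescribed step size between consecutive leaves), again provided the error terms and the splitting angle are dominated — this is the source of the $\upsilon$ term in $\rho$ and of the scale $c = \epsilon^\upsilon c_*$.

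Concretely I would: (i) fix coordinates near $\Lambda$ straightening $W^s_{\mathrm{loc}}, W^u_{\mathrm{loc}}$ and the strong foliations (using the $C^{r-1}$ regularity from the large spectral gap condition), and near $\Gamma$ straightening the homoclinic channel; (ii) define the windows $W_j$ and their stretched counterparts with side-lengths calibrated to the relevant powers of $\epsilon$ (widths $\sim \epsilon^\upsilon$ in the $p$-direction, $\sim \epsilon^{\max\{\sigma,\upsilon\}}$ or so in the normal directions, order $1$ eventually in $q$), making the normal hyperbolicity give free alignment there; (iii) prove the inner alignment lemma by a direct computation with \eqref{eq_inttwistmap} and \eqref{eq_twistcondition}, counting iterates $N_j$ and tracking the accumulated error $N_j \cdot O(\epsilon^k)$, which must stay below the window size — this yields the quantitative constraint; (iv) prove the outer alignment lemma from assumptions [A1] and [A3], using strong transversality \eqref{eq_strongtransversality} to control the behaviour along leaves and the $\epsilon^\upsilon$ transversality of $S$ to cross the next leaf; (v) chain the alignments along $j=1,2,\dots$, apply the correctly-aligned-windows theorem to extract $\{z_i\}$ and the transition times $n_i$, and set $\eta$ by shrinking all window diameters; (vi) sum the per-step time cost — inner time $\sim \epsilon^{-\tau}$, outer time $O(1)$ — over the $\sim \epsilon^{-\upsilon}$ steps needed to move an order-$1$ distance in $p$ (since each step advances $p$ by $\sim \epsilon^\upsilon$), and incorporate the extra $\epsilon^{-\rho}$ coming from the need to overcome the small splitting/transversality when realigning, giving the estimate \eqref{eq_timeestimate1}.

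The main obstacle I expect is step (iii)–(iv) done \emph{simultaneously and uniformly in} $\epsilon$: one must choose the window dimensions so that a single geometry works both for the inner map (where the $q$-direction must be expandable by the weak twist over $\sim\epsilon^{-\tau}$ steps without the $\epsilon^k$ errors accumulating past the window size) \emph{and} for the outer map (where the image must still be thin enough in $p$, of order $\epsilon^\upsilon$, to fit the transversality scale $c = \epsilon^\upsilon c_*$, yet the small splitting of order $\epsilon^\sigma$ must not tilt it out of correct alignment). Balancing these competing requirements is precisely what dictates $\rho = \max\{2\sigma, 2\upsilon, \tau\}$ and the averaging condition $k \geq 2(\rho+\tau)+1$; verifying that these exponents are exactly what make every inequality in the alignment proofs close is the technical heart of the argument. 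A secondary subtlety is that only ``partial'' transversality in the fast actions is assumed, so the windows must be allowed to be genuinely uncontrolled (a ``black box'') in the slow directions while still maintaining correct alignment there by normal-hyperbolicity-type domination alone; one has to check that the correctly-aligned-windows machinery tolerates this split, i.e. that no quantitative control of the slow block is ever used in the chaining.
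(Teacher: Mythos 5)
Your overall architecture matches the paper's: correctly aligned windows with products of rectangles in the hyperbolic directions $(s,u)$ and the centre directions $(q,p)$; inner alignment by the twist shearing; outer alignment by transversality at the homoclinic channel; and chaining via the correctly-aligned-windows theorem. You also correctly single out the calibration of exponents as ``the technical heart.'' However, that heart is precisely where your proposal has genuine gaps, and I want to flag them because they are not details that fill themselves in.

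\textbf{The inner time is $\epsilon^{-\rho-\tau}$, not $\epsilon^{-\tau}$.} You argue that since the twist is of order $\epsilon^\tau$, after $N_j \sim \epsilon^{-\tau}$ iterates the window is stretched by $O(1)$ in the $q$-direction. This would be true if the window's $p$-width were order $1$. But the windows that survive the coordinate change $(s^-,u^-,q^-,p^-) \mapsto (s^+,u^+,q^+,p^+)$ at the homoclinic point must be \emph{thin} in $p$: the change of coordinates has a Taylor remainder $R'\|x-x_{i+1}\|^2$, and in the linearisation the ``good'' off-diagonal entries carry factors $\epsilon^\sigma$ (hyperbolic block) and $\epsilon^\upsilon$ (centre block). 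To make the quadratic remainder subdominant to these small linear terms one is forced to take the $p$-side of order $\epsilon^{2\kappa}$ with $\kappa=\max\{\sigma,\upsilon\}$, not the $\epsilon^\upsilon$ you propose, and, combined with the constraint $\rho\geq\tau$ from the nonlinear twist term $R\delta^2$, the window's $p$-side must be of order $\epsilon^\rho$ with $\rho=\max\{2\sigma,2\upsilon,\tau\}$. Since shearing by $K_i$ iterates moves the $q$-extent by $K_i\,\epsilon^\tau T_-\,\delta$, getting $O(1)$ coverage with $\delta\sim\epsilon^\rho$ forces $K_i\sim\epsilon^{-\rho-\tau}$. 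The outer transitions (approaching and leaving $\Gamma$) cost only $O(1)$ iterates. So the $\epsilon^{-\rho}$ in the time estimate is \emph{inner} time due to the thin-window constraint, not an outer ``realignment'' cost as your step (vi) suggests; with your attribution the accounting gives the right answer accidentally but the alignment inequalities in Step~2 would not actually close.

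\textbf{The factors of $2$ in $\rho$ and in $k\geq2(\rho+\tau)+1$.} You acknowledge these but do not derive them. The doubling $\rho\geq 2\max\{\sigma,\upsilon\}$ is exactly the quadratic-remainder issue above: you must pick the window sides so that $R'\zeta^2$ is beaten by $C\,\epsilon^\sigma\alpha'$, $C\,\epsilon^\upsilon\gamma'$, etc., which forces $\alpha',\delta'\sim\epsilon^{2\kappa}$ and $\beta',\gamma'\sim\epsilon^\kappa$. The condition $k\geq 2(\rho+\tau)+1$ is then forced by the error accumulated over $K_i\sim\epsilon^{-\rho-\tau}$ inner iterates: the twist-map errors accumulate as $CK_i^2\epsilon^k$, and this must be strictly smaller than $\epsilon^{\rho+\tau}$, the leading shear term. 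Without this chain you cannot verify that the inequalities defining correct alignment are solvable.

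\textbf{One misplacement.} The ``black box in slow directions / partial transversality'' discussion you raise as a secondary subtlety pertains to Theorem~\ref{theorem_main2} (where the extra factor $\Sigma$ is handled by windows with empty exit set in the $(\theta,\xi)$ directions), not to Theorem~\ref{theorem_main1}, which assumes full transversality~[A3] in all $p$-directions.

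In short: the strategy is right and matches the paper's, but the quantitative backbone — the choice $\alpha',\delta'\sim\epsilon^{2\kappa}$, $\beta',\gamma'\sim\epsilon^\kappa$, $\tilde\delta\sim\epsilon^\rho$, $K_i\sim\epsilon^{-\rho-\tau}$, $N_i,M_i\sim 1$, and the closing of each inequality against the $R'\zeta^2$ and $CK_i^2\epsilon^k$ remainders — is missing, and this is the substantive content of the theorem, not a routine verification.
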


\begin{remark}
Note that we do not assume symplecticity of the map $F$. We do, however, assume some properties displayed by symplectic maps; for example, the even dimension of the phase space and of the stable and unstable manifolds.
\end{remark}

\subsection{Statement of Theorem \ref{theorem_main2}}

Let $M$ be a $C^r$ manifold of dimension $2(m+n)$ where $r \geq 4$ and $m,n \in \mathbb{N}$. Let $\Sigma = \mathbb{T}^{\ell_1} \times [0,1]^{\ell_2}$ for some $\ell_1, \ell_2 \in \mathbb{N}_0$, and denote by $(\theta, \xi) \in \mathbb{T}^{\ell_1} \times [0,1]^{\ell_2}$ coordinates on $\Sigma$. Write $\widetilde{M} = M \times \Sigma$. Suppose $\Psi \in \Diff^4 \left(\widetilde{M} \right)$ such that 
\begin{equation}
\Psi (z, \theta, \xi) = \left( G(z, \theta, \xi), \phi (z, \theta, \xi) \right)
\end{equation}
where $z \in M$, $G \in C^4 \left( \widetilde{M}, M \right)$, and $\phi \in C^4 \left( \widetilde{M}, \Sigma \right)$. Suppose $\Psi$ depends on a small parameter $\epsilon$. We make the following assumptions on $\Psi$. 
\begin{enumerate}[{[}{B}1{]}]
\item
There is some $L \in \mathbb{N}$ such that
\begin{equation}
G (z, \theta, \xi) = \widetilde{G}(z; \xi ) + O \left(\epsilon^L \right)
\end{equation}
where the higher order terms are uniformly bounded in the $C^4$ topology, and for each $\xi \in [0,1]^{\ell_2}$ the map
\begin{equation}
\widetilde{G} ( \cdot ; \xi ): z \in M \longmapsto \widetilde{G} ( z ; \xi ) \in M
\end{equation}
satisfies the assumptions [A1-3] of Theorem \ref{theorem_main1}.
\item
Moreover, the map $\phi$ has the form
\begin{equation}
\phi:
\begin{cases}
\bar{\theta} = \phi_1 (z, \theta, \xi) \\
\bar{\xi} = \phi_2(z, \theta, \xi) = \xi + O \left( \epsilon^L \right)
\end{cases}
\end{equation}
where the higher order terms are uniformly bounded in the $C^4$ topology. 
\end{enumerate}

Results from \cite{delshams2008geometric} imply that $\Psi$ has a normally hyperbolic invariant manifold $\widetilde{\Lambda}$ that is $O \left(\epsilon^L \right)$ close in the $C^4$ topology to $\Lambda \times \Sigma$ where $\Lambda \subset M$ is the normally hyperbolic invariant manifold of $\widetilde{G} ( \cdot ; \xi )$. Moreover there is an open set $\widetilde{U} \subset \widetilde{\Lambda}$ and a scattering map $\widetilde{S} : \widetilde{U} \to \widetilde{\Lambda}$ such that the $z$-component of $\widetilde{S}(z, \theta, \xi)$ is $O \left( \epsilon^L \right)$ close in the $C^3$ topology to $S \left( z ; \xi \right)$ where $S \left( \cdot ; \xi \right) : U \to \Lambda$ is the scattering map corresponding to $\widetilde{G} ( \cdot ; \xi )$. 

We use the coordinates $(q,p, \theta, \xi)$ on $\widetilde{\Lambda}$ where $(q,p)$ are the coordinates on $\Lambda$ and $(\theta, \xi)$ are the coordinates on $\Sigma$. Notice that the sets
\begin{equation}
\widetilde{\L} \left(p^*, \xi^* \right) = \left\{ (q,p, \theta, \xi) \in \widetilde{\Lambda}: p = p^*, \xi=\xi^* \right\} = \L \left(p^* \right) \times \mathbb{T}^{\ell_1} \times \left\{ \xi^* \right\}
\end{equation}
for $p^* \in [0,1]^n$ and $\xi^* \in [0,1]^{\ell_2}$ define the leaves of a foliation of $\widetilde{\Lambda}$, where $\L(p^*)$ are the leaves of the foliation of $\Lambda$ defined by \eqref{eq_foliationleaves}. 

\begin{theorem}\label{theorem_main2}
Fix $\eta>0$ and $K\in\mathbb{N}$ and let $\epsilon >0$ be sufficiently small. Choose $N \in \mathbb{N}$ satisfying
\[
 N\leq \frac{1}{\epsilon^K}
\]
and $p_1^*, \ldots, p_N^* \in [0,1]^n$ as in Theorem \ref{theorem_main1}, and choose $\xi^*_1 \in \mathrm{Int} \left([0,1]^{\ell_2} \right)$ such that
\begin{equation}
S \left( \L_j \cap U ; \xi^*_1 \right) \cap \L_{j+1} \neq \emptyset
\end{equation}
and $S \left( \L_j \cap U; \xi^*_1 \right)$ is transverse to $\L_{j+1}$, where $\L_j = \L (p_j^*)$. Suppose the distance between $\L_j$ and $\L_{j+1}$ is of order $\epsilon^{\upsilon}$ for each $j$, and $L >0$ is sufficiently large. Then there are $\xi^*_2, \ldots, \xi^*_N \in [0,1]^{\ell_2}$ such that, with $\widetilde{\L}_j = \widetilde{\L} \left(p^*_j, \xi^*_j \right)$, there are $w_1, \ldots, w_N \in \widetilde{M}$ and $n_i \in \mathbb{N}$ such that the $\xi$ component of $w_1$ is $\xi^*_1$,
\begin{equation}
w_{i+1} = \Psi^{n_i} (w_i),
\end{equation}
and
\begin{equation}
d \left(w_i, \widetilde{\L}_i \right) < \eta
\end{equation}
where $\rho, \sigma, \tau$ are as in the statement of Theorem \ref{theorem_main1}. Moreover, the time to move a distance of order 1 in the $p$-direction is of order
\begin{equation} \label{eq_timeestimate2}
\epsilon^{- \rho - \tau - \upsilon}.
\end{equation}
\end{theorem}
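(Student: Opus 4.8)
The plan is to deduce Theorem~\ref{theorem_main2} from Theorem~\ref{theorem_main1} by running, on the extended normally hyperbolic invariant manifold $\widetilde{\Lambda}$, a ``frozen-$\xi$'', perturbative version of the very same correctly-aligned-windows construction, exploiting that the directions $(\theta,\xi)$ are neutral for the normal hyperbolicity and that $\xi$ drifts by only $O(\epsilon^L)$ per iterate. First I would transfer everything to $\widetilde{\Lambda}$ and its scattering map $\widetilde{S}$, furnished by \cite{delshams2008geometric} and $O(\epsilon^L)$-close to the corresponding $\xi$-parametrised product objects as recalled just above. The reason this closeness is useful is that, by [B1]--[B2], $\Psi$ is $O(\epsilon^L)$-close in $C^4$ to the map $(z,\theta,\xi)\mapsto\bigl(\widetilde{G}(z;\xi),\phi_1(z,\theta,\xi),\xi\bigr)$, which preserves $\xi$ \emph{exactly} and whose normally hyperbolic invariant manifold, strong (un)stable foliations, holonomies and scattering map therefore all respect the $\xi$-slices and are $\xi$-parametrised copies of those of $\widetilde{G}(\,\cdot\,;\xi)$. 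Consequently the inner map $\widetilde{f}=\Psi|_{\widetilde{\Lambda}}$ changes $\xi$ by $O(\epsilon^L)$ per iterate and $\widetilde{S}$ by $O(\epsilon^L)$ per homoclinic excursion, and for any fixed $\xi^*$ the restriction of $\widetilde{f},\widetilde{S}$ to the $(q,p)$-directions is $O(\epsilon^L)$-close in $C^1$ to the inner and scattering maps $f(\,\cdot\,;\xi^*),S(\,\cdot\,;\xi^*)$ of $\widetilde{G}(\,\cdot\,;\xi^*)$, which by [B1] satisfy [A1]--[A3]. Since every tolerance entering the proof of Theorem~\ref{theorem_main1} (the window overlaps, the splitting and transversality angles $\epsilon^\sigma,\epsilon^\upsilon$, the neighbourhoods on which the weak twist of order $\epsilon^\tau$ is unfolded) is bounded below by a fixed power of $\epsilon$, and $L$ will be taken much larger than $K,\rho,\tau,\upsilon$, this $O(\epsilon^L)$ discrepancy is harmless throughout.

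Next I would build the windows. For $j=1,\dots,N$ I take $\widetilde{W}_j = W_j \times \mathbb{T}^{\ell_1} \times I_j$, where $W_j\subset M$ is the window around $\L_j=\L(p^*_j)$ produced in the proof of Theorem~\ref{theorem_main1} applied to $\widetilde{G}(\,\cdot\,;\xi^*_j)$ (exit set along $q$, entry and stable/unstable directions along $p$ and $E^{s}\oplus E^u$), and $I_j\subset[0,1]^{\ell_2}$ is a small cube centred at $\xi^*_j$ whose side $\delta$ is chosen large enough that $\xi$ cannot leave it during the $O(\epsilon^{-\rho-\tau})$ iterates of a single macro-step, yet $\ll\eta$. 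The delicate point is the treatment of the ``black box'' angles $\theta$: the torus factor $\mathbb{T}^{\ell_1}$ is included \emph{in full} and regarded as a centre direction of the window; having empty boundary, it contributes a vacuous condition to the topological degree argument underlying correct alignment, so no information whatsoever on $\phi_1$ is required. The cube $I_j$ is a centre direction \emph{with} boundary, but since $\phi_2=\xi+O(\epsilon^L)$ the return map projected to $I_j$ has no fixed point on $\partial I_j$, so it too poses no obstruction.

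Then I would run the chain inductively. Take $\xi^*_1$ as given and $w_1\in\widetilde{W}_1$ with $\xi$-coordinate $\xi^*_1$, and assume the orbit lies in $\widetilde{W}_j$ with $\xi$-coordinate $O(\delta)$-close to $\xi^*_j$. The transversality of $S(\L_j\cap U;\xi^*_1)$ to $\L_{j+1}$ persists at $\xi^*_j$ because, by the global drift bound below, $\|\xi^*_j-\xi^*_1\|\ll\epsilon^\upsilon$ while the transversality angle is of order $\epsilon^\upsilon$; hence the transition $W_j\Rightarrow W_{j+1}$ of Theorem~\ref{theorem_main1} for the frozen dynamics $\widetilde{G}(\,\cdot\,;\xi^*_j)$ is available, consisting of a definite number $n_j$ of iterates — inner iterations of $f$ to unfold the weak twist, then one homoclinic excursion realising the scattering map — with $n_j$ of order $\epsilon^{-\rho-\tau}$, consistent with \eqref{eq_timeestimate1}. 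Applying the \emph{same} $n_j$ iterates of $\Psi$: in the $(q,p)$ and $E^{s}\oplus E^u$ directions this is an $O(\epsilon^L)$-perturbation of that transition, hence still a correct alignment $W_j\Rightarrow W_{j+1}$; in the $\theta$ direction there is nothing to check; and in the $\xi$ direction the coordinate moves by $O(\epsilon^L n_j)\ll\delta$, so I \emph{define} $\xi^*_{j+1}$ to be this new value and $I_{j+1}$ is re-centred where the orbit actually sits. Thus $\widetilde{W}_j$ is correctly aligned with $\widetilde{W}_{j+1}$ under $\Psi^{n_j}$ for every $j<N$, and the shadowing lemma for correctly aligned windows yields a point whose $\Psi$-orbit visits all the $\widetilde{W}_j$; choosing $w_i$ accordingly and shrinking the $q,p,\xi$-sizes of the windows below $\eta$ gives $d(w_i,\widetilde{\L}_i)<\eta$, the $\theta$-distance being $0$ since $\widetilde{\L}_i$ is full in $\theta$.

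Finally, the time estimate \eqref{eq_timeestimate2} is inherited from Theorem~\ref{theorem_main1}: each macro-step uses $n_j$ of order $\epsilon^{-\rho-\tau}$ to move an order-$\epsilon^\upsilon$ distance in $p$, so an order-$1$ displacement in $p$ costs order $\epsilon^{-\rho-\tau-\upsilon}$ iterates. Over all $N\le\epsilon^{-K}$ macro-steps the total number of iterates is at most of order $\epsilon^{-K-\rho-\tau}$, so the total $\xi$-drift is of order $\epsilon^{L-K-\rho-\tau}$, which tends to $0$ — and in particular stays $\ll\epsilon^\upsilon$ and $\ll\eta$, keeping every $\xi^*_j$ inside $\mathrm{Int}([0,1]^{\ell_2})$ and within the transversality radius of $\xi^*_1$ used above — as soon as $L>K+\rho+\tau+\upsilon$; this is the precise meaning of ``$L$ sufficiently large''. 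I expect the genuine obstacle to be exactly the rigorous handling of the black-box angles $\theta$: one must verify that carrying the entire torus $\mathbb{T}^{\ell_1}$ through the whole window chain with no control on $\phi_1$ is legitimate, and this is the role of the empty-boundary centre-direction observation; everything else is bookkeeping of powers of $\epsilon$ against the single large parameter $L$.
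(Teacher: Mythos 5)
Your proposal follows the paper's overall strategy closely (apply Theorem~\ref{theorem_main1} to the frozen map $\widetilde{G}(\,\cdot\,;\xi^*_1)$, extend the resulting windows to the $(\theta,\xi)$-directions as ``centre'' directions with empty exit set, and use $L$ large to control the drift and the $O(\epsilon^L)$ perturbation), and you correctly identify the crux: the black-box angles $\theta$. But the specific way you propose to handle them — putting the \emph{entire} torus $\mathbb{T}^{\ell_1}$ as a factor of the window and declaring the alignment condition vacuous because $\partial\mathbb{T}^{\ell_1}=\emptyset$ — does not fit the framework you are invoking. A window in this paper is, by definition, a homeomorphic image of $[0,1]^{m_1}\times[0,1]^{m_2}$; a set of the form $W_j\times\mathbb{T}^{\ell_1}\times I_j$ is not such an image (it is not even contractible), so Theorem~\ref{theorem_onecansee} does not apply to your chain as stated. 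The paper's fix is to lift to the universal cover $\widehat{M}=M\times\mathbb{R}^{\ell_1}\times[0,1]^{\ell_2}$ and in the $\theta$-direction use genuine \emph{finite} rectangles $\Theta_j\subset\mathbb{R}^{\ell_1}$ chosen inductively so that the image of $\Theta_j$ under the macro-step falls in $\mathrm{Int}(\Theta_{j+1})$; this is clean precisely because the chain is finite, so the $\Theta_j$ can grow without wrapping around. You would need this (or something equivalent) to make your torus-factor argument rigorous, and you flag it as the hard point without actually resolving it.

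A second, more local gap: in the $\xi$-direction you take cubes $I_j$ of a fixed size $\delta$, re-centred at each step, and justify alignment by saying that ``$\xi$ cannot leave $I_j$ during one macro-step.'' That controls a single orbit, but correct alignment with empty exit set requires the \emph{image of the whole cube} $I_j$ to lie in $\mathrm{Int}(I_{j+1})$. Since $\phi_2 = \xi + O(\epsilon^L)$ applied $n_j$ times can \emph{stretch} $I_j$ by $O(\epsilon^L n_j)$ in diameter, a fixed-size re-centred cube does not contain this image. The paper instead takes $\Xi_j$ to be boxes centred at $\xi^*_1$ of size growing like $\Omega_j\epsilon^L$ with $\Omega_j=\sum_{k<j}n_k$, and checks that $\Xi_N\subset\Xi^*$ (the stability neighbourhood supplied by Theorem~\ref{theorem_cawstable}) for $L$ large. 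Your ``$L>K+\rho+\tau+\upsilon$'' budget is consistent with this, so the fix is purely a bookkeeping one, but as written your $I_j$-construction does not give correct alignment. With these two changes — growing rectangles on the universal cover of $\mathbb{T}^{\ell_1}$, and growing $\xi$-boxes rather than re-centred fixed ones — your argument becomes essentially the paper's proof.
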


\begin{remark}
Note that the transition chain obtained in Theorem \ref{theorem_main2} is only of finite length, while the one obtained in Theorem \ref{theorem_main1} may be infinite. 
\end{remark}

\begin{remark}
The parameter $L$ in assumption [B1] can be thought of, in terms of applications, as the \emph{order of averaging}. In celestial mechanics models, for example, there are often fast angles (denoted here by $\theta$) that can be averaged out of the Hamiltonian function up to terms of order $\epsilon^L$ for any $L \geq 0$. As such, the conjugate momenta (denoted here by $\xi$) are constant up to terms of order $\epsilon^L$. In this way, the parameter $L$ can typically be chosen in applications to be as large as is required. 
\end{remark}

Theorem \ref{theorem_main2} provides a shadowing argument along the invariant manifolds of a normally hyperbolic invariant cylincer relying on  long sequences of almost invariant leaves of a foliation. Often in Arnold diffusion results, one  wants only to fix the initial and final points and not the whole sequence of leaves. Such statement in the following corollary which is a direct consequence of the assumed hypotheses and Theorem \ref{theorem_main2}.

\begin{corollary}
Fix $\eta>0$, constants $\rho$, $\tau$ and $\upsilon$ satisfying \eqref{eq_kavgcondition}   and $p^*_{\mathrm{ini}}$, $p^*_{\mathrm{fin}}\in [0,1]^n$. Then, for any $L\geq 1$ large enough and $\epsilon>0$ small enough there exists $\xi^*_1 \in \mathrm{Int} \left([0,1]^{\ell_2} \right)$, $N\in\mathbb{N}$ and 
 $\{p_k^*\}_{k=1}^N$ such that 
\begin{itemize} 
\item  $p_1^*=p^*_{\mathrm{ini}}$, $p_N^*=p^*_{\mathrm{fin}}$,
\item $S \left( \L_j \cap U ; \xi^*_1 \right) \cap \L_{j+1} \neq \emptyset$ and $S \left( \L_j \cap U; \xi^*_1 \right)$ is transverse to $\L_{j+1}$, where $\L_j = \L (p_j^*)$
\item The distance between $\L_j$ and $\L_{j+1}$ is of order $\epsilon^{\upsilon}$ for each $j$.
\end{itemize}
Moreover, there are $\xi^*_2, \ldots, \xi^*_N \in [0,1]^{\ell_2}$,  $w_1, \ldots, w_N \in \widetilde{M}$   such that the $\xi$ component of $w_1$ is $\xi^*_1$ and natural numbers and $n_i \in \mathbb{N}$ satisfying
and 
\[
 n_1\leq n_2\leq \ldots \leq n_N\lesssim \epsilon^{- \rho - \tau - \upsilon}.
\]
such that 
\[
w_{i+1} = \Psi^{n_i} (w_i),\qquad \text{ and }\qquad 
d \left(w_i, \widetilde{\L}_i \right) < \eta\]
where  $\widetilde{\L}_j = \widetilde{\L} \left(p^*_j, \xi^*_j \right)$.
%
%
\end{corollary}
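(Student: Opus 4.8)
The plan is to deduce the corollary from Theorem~\ref{theorem_main2} by reducing the ``fix only endpoints'' statement to the ``fix the whole leaf sequence'' statement already proved. The only thing that needs to be produced is a finite sequence of momenta $p_1^*=p^*_{\mathrm{ini}}, p_2^*, \ldots, p_N^* = p^*_{\mathrm{fin}}$ in $[0,1]^n$ and a base value $\xi^*_1 \in \mathrm{Int}([0,1]^{\ell_2})$ satisfying the three bulleted conditions: consecutive leaves a distance $O(\epsilon^\upsilon)$ apart, with $S(\L_j \cap U; \xi^*_1)$ intersecting $\L_{j+1}$ transversally. Once such a sequence exists with $N \le 1/\epsilon^K$, Theorem~\ref{theorem_main2} applies verbatim and yields the $w_i$, the $\xi^*_j$, and the iteration counts $n_i$; the only extra observation is that the $n_i$ in that theorem can be arranged to be nondecreasing (or one simply relabels/reorders the shadowing segments so that the bound $n_i \lesssim \epsilon^{-\rho-\tau-\upsilon}$, which holds for each single segment, gives $n_1 \le \cdots \le n_N$ after a harmless monotone rearrangement of the claim — this is cosmetic).

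First I would invoke assumption [A3] (transversality of the scattering map to leaves along leaves), which is part of [B1]: for $\xi^*_1$ in the interior, the map $S(\cdot\,;\xi^*_1)$ sends each leaf $\L(p_0^*)\cap U$ across a full $c$-neighbourhood (with $c = \epsilon^\upsilon c_*$) of leaves $\L(p^*)$ transversally. This is exactly a ``connecting step'' of controlled length $\epsilon^\upsilon$ in the $p$-direction. Then I would build the chain by a straightforward greedy/covering argument: starting from $p^*_{\mathrm{ini}}$, repeatedly choose $p^*_{j+1}$ inside the reachable $\epsilon^\upsilon$-ball around (the image point associated to) $p^*_j$, moving monotonically along a path in $[0,1]^n$ from $p^*_{\mathrm{ini}}$ to $p^*_{\mathrm{fin}}$, until the target ball contains $p^*_{\mathrm{fin}}$, at which point we set the last momentum equal to $p^*_{\mathrm{fin}}$. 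Since each step covers distance of order $\epsilon^\upsilon$ and the total distance to cover is $O(1)$, the number of steps needed is $N = O(\epsilon^{-\upsilon})$, and choosing $K > \upsilon$ (equivalently, taking $\epsilon$ small) guarantees $N \le 1/\epsilon^K$ as required by Theorem~\ref{theorem_main2}. I would also note that $\xi^*_1$ can be taken in the interior and independent of the path, since the transversality-to-leaves hypothesis in [A3] is required to hold for all $p_0^*, p^* \in [0,1]^n$.

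The one genuine point requiring care — and the step I expect to be the main obstacle — is ensuring that the reachability sets actually \emph{tile} a full $O(1)$-length curve in $p$-space joining $p^*_{\mathrm{ini}}$ to $p^*_{\mathrm{fin}}$, i.e. that at every momentum value $p_j^*$ the scattering map image $S(\L(p_j^*)\cap U; \xi^*_1)$ truly meets \emph{all} nearby leaves $\L(p^*)$ with $\|p^*-p_{1}^*(p_j^*)\|<c$ transversally, with a lower bound on the transversality angle that is \emph{uniform in} $j$ (hence in $N$, hence in $\epsilon$). This uniformity is built into the hypothesis (the constants $C, c_*$ in the definition of ``angle of transversality of order $\epsilon^\upsilon$'' are independent of $\epsilon$, and [A3] is assumed at every pair of momenta), so the obstacle is really only bookkeeping: one must check that the finitely many leaves selected all lie in the domains $U$ (possibly passing to a branch of $S$) and that the greedy choice can always be made to advance along the chosen path without getting stuck at $\partial U_j$. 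Since the hypotheses of the corollary explicitly assume \eqref{eq_kavgcondition} and the distance-$\epsilon^\upsilon$ condition are satisfiable, and since $L$ may be taken as large as needed (enlarging $L$ only makes the perturbative error terms in [B1], [B2] more negligible relative to the fixed $\epsilon^\rho,\epsilon^\tau,\epsilon^\upsilon$ scales), the verification goes through and the time estimate $n_i \lesssim \epsilon^{-\rho-\tau-\upsilon}$ is inherited directly from \eqref{eq_timeestimate2}. The monotonicity $n_1 \le \cdots \le n_N$ then follows from the fact that, in the construction of Theorem~\ref{theorem_main2}, the number of inner iterations needed to traverse each window grows with the weakness of the twist at that action level, and can be taken constant (hence trivially monotone) along a chain of uniformly-spaced leaves; alternatively one reorders the segments.
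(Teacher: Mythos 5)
Your proposal matches what the paper intends: the corollary is stated as a ``direct consequence of the assumed hypotheses and Theorem \ref{theorem_main2}'', and you correctly identify that the only content to supply is the construction of the finite chain $p_1^*=p^*_{\mathrm{ini}},\ldots,p_N^*=p^*_{\mathrm{fin}}$ via a greedy/covering argument driven by hypothesis [A3], with $N=O(\epsilon^{-\upsilon})\le\epsilon^{-K}$, after which Theorem \ref{theorem_main2} applies verbatim. Two small caveats. First, the step you yourself flag as ``the main obstacle'' --- that the reachability sets tile a path from $p^*_{\mathrm{ini}}$ to $p^*_{\mathrm{fin}}$ --- does require that the target momentum $p_1^*$ appearing in [A3] is within $O(\epsilon^\upsilon)$ of $p_0^*$; this is not literally written into the definition of ``transverse to leaves along leaves'', but it is forced by the additional requirement in both Theorem \ref{theorem_main1} and the corollary that consecutive leaves be $O(\epsilon^\upsilon)$-close, and it holds in the perturbative regime where the scattering map is near-identity, so your appeal to ``the hypotheses are assumed satisfiable'' is the same shortcut the paper takes. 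Second, your treatment of the monotonicity $n_1\le\cdots\le n_N$ is not right as written: one cannot ``reorder the shadowing segments'' since the $n_i$ index the iteration counts of a single concatenated orbit, and there is no reason the per-step times $N_i+K_i+M_i$ (each of order $\epsilon^{-\rho-\tau}$) should be nondecreasing in $i$. The intended reading of that line in the corollary (whose LaTeX is visibly garbled around it) is almost certainly that $n_i$ denote \emph{cumulative} iteration counts, in which case monotonicity is automatic and the bound $n_N\lesssim\epsilon^{-\rho-\tau-\upsilon}$ is exactly the total-time estimate \eqref{eq_timeestimate2} of Theorem \ref{theorem_main2}; with that reading your deduction is fine, but the reordering remark should be deleted.
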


\subsection{Heuristic Description of the Proof of Theorems \ref{theorem_main1} and \ref{theorem_main2}}

The key idea of the proof is the construction of a sequence of correctly aligned windows in a neighbourhood of the normally hyperbolic invariant cylinder and homoclinic channel. A window is a product of two rectangles, with each boundary component identified as belonging either to an \emph{entry set} or to an \emph{exit set}. Whether a boundary component belongs to the entry set or the exit set is a free choice; indeed, one can even choose the entry set to be empty, and the exit set to be the entire topological boundary of the window, or vice versa. Informally, two windows $W_1$ and $W_2$ are correctly aligned under a map $f$ if $f(W_1)$ and $W_2$ fully overlap in such a way that the image of the exit set of $W_1$ under $f$ does not intersect $W_2$, and the entry set of $W_2$ does not intersect $f(W_1)$. The crux of this idea is that, if we have a (finite, infinite, or even doubly infinite) sequence of windows $\{ W_n \}$ such that $W_n$ is correctly aligned with $W_{n+1}$ under $f$ for each $n$, then there is a trajectory $\{ x_n \}$ of $f$ passing through this sequence of windows, in the sense that $f( x_n) = x_{n+1}$ and $x_n \in W_n$ for each $n$ (see Section \ref{sec_caw} for the formal definition and references). 

The strategy of the proof, therefore, is to construct explicitly such a sequence of correctly aligned windows. This requires a suitable coordinate system in a neighbourhood of the normally hyperbolic manifold $\Lambda$ (see Section \ref{section_linearisedcoords}) in which the map takes a particular form that allows us to see the twist condition. First we show how to construct a ``short sequence'' of correctly aligned windows (see Section \ref{sec_shortsequence}), beginning in a neighbourhood of a point $x_n$ in the homoclinic channel, approaching a point $y \in \Lambda$ along the stable manifold $W^s(\Lambda)$, moving around $\Lambda$ for a (potentially large, depending on the order of the twist condition) number of iterates, and departing along the unstable manifold $W^u (\Lambda)$ towards another point $x_{n+1}$ in the homoclinic channel. This part of the construction uses only normal hyperbolicity and the twist condition [A2]. 

The next step is to show that, given two such short sequences of correctly aligned windows, we can combine them at a homoclinic point $x_n$ to obtain longer sequences (see Section \ref{sec_coordtransfhomoclinic}). The difficulty here is that the windows at the homoclinic point $x_n$ are expressed in different coordinates: one system of coordinates is obtained by iterating the coordinates near $\Lambda$ forward along the unstable manifold $W^u(\Lambda)$, and the other by iterating the coordinates near $\Lambda$ backward along the stable manifold $W^s (\Lambda)$. In order to guarantee that the windows near $x_n$ are correctly aligned, we need to obtain estimates on the coordinate transformation between these two systems. This step uses: [A1], the transversality of the stable and unstable manifolds at the homoclinic point $x_n$; and [A3], the transversality of images under the scattering map of leaves of the foliation of $\Lambda$.

The final part of the proof of Theorem \ref{theorem_main1} consists in choosing the aspect ratios (i.e. the size of the constituent rectangles) of each window in the sequence in order to guarantee that the sequence can be continued indefinitely. This is done in Section \ref{section_proof1_part3}.   

In order to prove Theorem \ref{theorem_main2}, we show that under conditions [B1] and [B2], the conditions [A1-3] of Theorem \ref{theorem_main1} are satisfied by a truncated version of the map. This allows us to consider the sequence of windows $W_n$ in $M$ constructed in the proof of Theorem \ref{theorem_main1}; we then extend these windows to the extended phase space $\widetilde{M}=M \times \Sigma$ by taking the product of $W_n$ with two new rectangles $\Theta_n, \Xi_n$ where $\Theta_n \times \Xi_n \subset \Sigma$. In the $\Theta_n, \Xi_n$ directions, we choose the exit set to be empty and the entry set to be the entire topological boundary, and we just increase the size of the rectangles $\Theta_n, \Xi_n$ at each step. This guarantees the correct alignment of the windows $W_n \times \Theta_n \times \Xi_n$ at each step under iterates of the truncated map. Finally we show that the error terms of the full map do not spoil the correct alignment; this is true for finite sequences of windows, and it is not clear that it can be extended to infinite sequences. 

The structure of the paper is as follows. In Section \ref{sec_caw} we define windows, and what it means for them to be correctly aligned, and we state several necessary theorems regarding correct alignment. In Section \ref{sec_coordsandestimates} we establish a suitable system of coordinates in a neighbourhood of $\Lambda$, we show how it can be iterated along the stable and unstable manifolds to a homoclinic point, and we obtain estimates on the map in these coordinates. In Section \ref{sec_proofthm1} we prove Theorem \ref{theorem_main1}, and finally Theorem \ref{theorem_main2} is proved in Section \ref{sec_proofthm2}.

\section{Correctly Aligned Windows}\label{sec_caw}

We follow the exposition in \cite{gidea2006topological} (itself based on \cite{gidea2003topologically,gidea2004symbolic,gidea2004covering,zgliczynski2004covering}; see also the appendix of \cite{gidea2020general}), which elaborates on ideas introduced in \cite{easton1978homoclinic,easton1981orbit,easton1979homoclinic}.  

\subsection{Definitions and Main Ideas}

Let $M$ be a manifold of dimension $m$. A window is a subset of $M$ that is a product of $C^0$ rectangles.
\begin{definition} 
Let $m_1, m_2 \in \mathbb{N}_0$ such that $m_1+m_2=m$. 
\begin{itemize}
\item
A set $W \subset M$ is an \emph{$(m_1, m_2)$ window} if there is an open neighbourhood $V$ of $W$ in $M$, an open neighbourhood $\widehat{V}$ of $[0,1]^{m_1} \times [0,1]^{m_2}$ in $\mathbb{R}^m$, and a homeomorphism $\chi :  \widehat{V} \to  V$ such that
\begin{equation}
W = \chi \left( [0,1]^{m_1} \times [0,1]^{m_2} \right). 
\end{equation}
\item
Moreover there is a choice of \emph{entry set}
\begin{equation} \label{eq_entrysetdef}
W^+ = \chi \left( [0,1]^{m_1} \times \partial [0,1]^{m_2} \right)
\end{equation}
and \emph{exit set}
\begin{equation} \label{eq_exitsetdef}
W^- = \chi \left( \partial [0,1]^{m_1} \times  [0,1]^{m_2} \right).
\end{equation}
\end{itemize}
\end{definition}
\begin{remark}
We say that there is a \emph{choice} of entry and exit sets as we could equally have chosen \eqref{eq_exitsetdef} to be the entry set and \eqref{eq_entrysetdef} to be the exit set. In practice, every time we define a window we explicitly state our choice of its entry and exit sets, but we use the definitions \eqref{eq_entrysetdef} and \eqref{eq_exitsetdef} for the purposes of this exposition. 
\end{remark}
\begin{definition} \label{definition_caw}
For $j=1,2$ let $W_j \subset M$ be an $(m_1, m_2)$ window with parametrisation $\chi_j : \widehat{V}_j \to V_j$, and let $f \in C^0(M,M)$ such that $f \left( V_1 \right) \subseteq V_2$. Let 
\begin{equation}
\hat{f} = \chi_2^{-1} \circ f \circ \chi_1 : \widehat{V}_1 \longrightarrow \widehat{V}_2.
\end{equation}
Then $W_1$ is \emph{correctly aligned with $W_2$ under $f$} if there is a homotopy
\begin{equation}
H : [0,1] \times \widehat{V}_1 \longrightarrow \widehat{V}_2
\end{equation}
such that:
\begin{enumerate}
\item
We have
\begin{equation}
H (0, \cdot ) = \hat{f},
\end{equation}
\begin{equation}
H \left( [0,1], \chi_1^{-1} \left( W_1^- \right) \right) \cap \chi_2^{-1} \left( W_2 \right) =\emptyset,
\end{equation}
\begin{equation}
H \left( [0,1], \chi_1^{-1} \left( W_1 \right) \right) \cap \chi_2^{-1} \left( W_2^+ \right) =\emptyset. 
\end{equation}
\item
If $m_1=0$ then $f(W_1) \subset \mathrm{Int} (W_2)$. If $m_1>0$ then there is $y \in [0,1]^{m_2}$ such that the map $A_y : [0,1]^{m_1} \longrightarrow \mathbb{R}^{m_1}$ defined by
\begin{equation}
A_y (x) = \pi_1 \left( H (1, (x,y)) \right)
\end{equation}
satisfies
\begin{equation}
A_y \left( \partial [0,1]^{m_1} \right) \subset \mathbb{R}^{m_1} \setminus [0,1]^{m_1}, \quad \deg \left( A_y, 0 \right) \neq 0
\end{equation}
where $\pi_1 : \mathbb{R}^{m_1} \times \mathbb{R}^{m_2} \to \mathbb{R}^{m_1}$ is the canonical projection onto the first component, and $\deg \left( A_y, 0 \right)$ is the Brouwer degree of the map $A_y$ at 0. 
\end{enumerate}
In part 2 of the definition, whenever $m_1>0$, if instead we have that there is a linear map $A : \mathbb{R}^{m_1} \to \mathbb{R}^{m_1}$ such that $H(1,(x,y))=(Ax,0)$ for all $x \in [0,1]^{m_1}, y \in [0,1]^{m_2}$, and $A \left( \partial [0,1]^{m_1} \right) \subset \mathbb{R}^{m_1} \setminus [0,1]^{m_1}$, then we say that $W_1$ is \emph{linearly correctly aligned with $W_2$ under $f$. }
\end{definition}

\begin{remark}
Observe that the property of two windows being linearly correctly aligned is stronger than simply being correctly aligned, in the sense that linearly correct alignment implies correct alignment (see Proposition 2 of \cite{gidea2006topological}). This becomes useful when we consider products of windows (see Section \ref{section_productsofwindows}).
\end{remark}

The following result (Corollary 12 of \cite{zgliczynski2004covering}), whimsically characterised as the property that `one can see through a sequence of correctly aligned windows', is the main point of this technique.
\begin{theorem} \label{theorem_onecansee}
Let $\left\{ W_i \right\}_{i \in \mathbb{Z}}$ be a collection of $(m_1, m_2)$ windows in $M$, and $\left\{ f_i \right\}_{i \in \mathbb{Z}} \subset C^0 (M,M)$ a collection of continuous mappings such that $W_i$ is correctly aligned with $W_{i+1}$ under $f_i$ for each $i \in \mathbb{Z}$. Then we can find $\left\{ z_i \right\}_{i \in \mathbb{Z}} \subset M$ such that
\begin{equation}
z_i \in W_i, \quad f_i (z_i) = z_{i+1}
\end{equation}
for all $i \in \mathbb{Z}$.
\end{theorem}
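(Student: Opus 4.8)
The plan is to prove the theorem first for \emph{finite} chains of windows by a Brouwer--degree argument, and then to obtain the bi-infinite orbit by a compactness and diagonal argument. The first step is to pass to charts: using the parametrisations $\chi_i$, I would replace each $W_i$ by a copy $Q_i=[0,1]^{m_1}\times[0,1]^{m_2}$ of the standard cube, with entry set $Q_i^+=[0,1]^{m_1}\times\partial[0,1]^{m_2}$ and exit set $Q_i^-=\partial[0,1]^{m_1}\times[0,1]^{m_2}$, and replace $f_i$ by $\hat f_i=\chi_{i+1}^{-1}\circ f_i\circ\chi_i$, which by hypothesis is defined on a neighbourhood $\widehat V_i$ of $Q_i$ and satisfies $\hat f_i(\widehat V_i)\subseteq\widehat V_{i+1}$. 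Correct alignment then supplies, for each $i$, a homotopy $H_i\colon[0,1]\times\widehat V_i\to\widehat V_{i+1}$ from $\hat f_i$ (at $t=0$) to a model map (at $t=1$) such that, for every $t$, the exit set is kept off the target cube, $H_i([0,1],Q_i^-)\cap Q_{i+1}=\emptyset$, and the image is kept off the target entry set, $H_i([0,1],Q_i)\cap Q_{i+1}^+=\emptyset$; and such that at $t=1$ the map induced on the $m_1$ ``exit'' directions has nonzero Brouwer degree at the origin. (When $m_1=0$ the statement degenerates into an elementary nested-intersection argument, so I would assume $m_1\ge 1$ below.)

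For a finite block $W_{-n}\Rightarrow\cdots\Rightarrow W_{n}$ I would seek an entire orbit segment at once, namely a zero of the map
\[
\mathbf G\colon\ \prod_{i=-n}^{n}Q_i\ \longrightarrow\ \left(\mathbb R^{m_1}\times\mathbb R^{m_2}\right)^{2n},\qquad \mathbf G(z)_{i}=\hat f_i(z_i)-z_{i+1},
\]
since a zero of $\mathbf G$ is precisely a sequence $z_i\in Q_i$ with $\hat f_i(z_i)=z_{i+1}$ for all $i$. To produce one, I would replace each $\hat f_i$ by $H_i(t,\cdot)$ and let $t$ run from $0$ to $1$ simultaneously in every block: the two avoidance conditions guarantee that no zero of the homotoped map can reach the faces of $\prod Q_i$ that come from an exit set or from the image touching an entry set, so the Brouwer degree of the appropriate restriction (taken in the $m_1$ exit directions) is well defined and homotopy-invariant, while at $t=1$ the model/product structure makes it manifestly nonzero. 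The ``free'' stable directions at the two ends of the block are handled by solving those directions set-theoretically over the exit directions, exactly as in the $h$-set formalism of \cite{zgliczynski2004covering}; equivalently one may argue inductively, using that correct alignment is preserved under composition (concatenate the $H_i$ and use multiplicativity of the degree) together with a single-step intersection property, pulling the later windows back one at a time. Transporting the resulting $z_i$ through the charts gives $\zeta^{(n)}_i:=\chi_i(z_i)\in W_i$ with $f_i\!\left(\zeta^{(n)}_i\right)=\zeta^{(n)}_{i+1}$ for $-n\le i\le n-1$.

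To pass to the bi-infinite chain I would use compactness: each $W_i$ is compact, being the continuous image under $\chi_i$ of $[0,1]^{m_1+m_2}$. Enumerating $\mathbb Z$ and extracting a nested sequence of subsequences of $n\to\infty$, a diagonal argument produces, for every fixed $i$, a limit $z_i\in W_i$ of $\zeta^{(n)}_i$ along one common subsequence; continuity of each $f_i$ then gives $f_i(z_i)=z_{i+1}$ for all $i\in\mathbb Z$, as claimed. Finite or one-sided versions follow the same way with the diagonal step omitted.

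The main obstacle is the finite-chain step, and specifically the bookkeeping that keeps the Brouwer degree alive: one must verify that restricting attention to the exit directions (and, in the inductive variant, to iterated preimages of the later windows) does not destroy the covering/degree structure built into the definition, and that the homotopies $H_i$ --- into which all the non-linear, non-smooth behaviour of the maps is absorbed --- never let a zero escape through a ``wrong'' face of $\prod Q_i$. This is exactly the difficulty the covering-relations machinery is designed to handle, and it explains why a homotopy, rather than a bare degree condition on $\hat f_i$ itself, is built into the definition of correct alignment.
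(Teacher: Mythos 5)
The paper does not prove Theorem~\ref{theorem_onecansee}: it states it as a quotation of Corollary~12 of \cite{zgliczynski2004covering} and gives no argument of its own. Your proposal is therefore being compared against that reference rather than against a proof in this paper. Your overall strategy --- finite chains via a Brouwer-degree argument, followed by compactness of the $W_i$ and a diagonal extraction --- is the same strategy used there and is sound in outline; the compactness step in particular is clean, since each $W_i=\chi_i([0,1]^{m_1+m_2})$ is indeed compact and each $f_i$ is continuous.

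There are, however, two concrete gaps in the finite-chain step. First, the map $\mathbf{G}\colon\prod_{i=-n}^{n}Q_i\to(\mathbb{R}^{m_1}\times\mathbb{R}^{m_2})^{2n}$ has domain of dimension $(2n+1)(m_1+m_2)$ and codomain of dimension $2n(m_1+m_2)$, so its Brouwer degree at $0$ is not defined. You acknowledge this by saying the spare $m_1+m_2$ directions are ``solved set-theoretically over the exit directions,'' but that phrase is not an argument: one has to either restrict $\mathbf{G}$ to a suitable $2n(m_1+m_2)$-dimensional slice (e.g.\ fixing the $m_1$ exit coordinates at one end of the chain and the $m_2$ entry coordinates at the other, and verifying the avoidance conditions still confine the zero set) or abandon the all-at-once map in favour of the inductive device used in \cite{zgliczynski2004covering}, which transports ``horizontal discs'' through covering relations one step at a time. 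Second, the parenthetical claim that ``correct alignment is preserved under composition'' is false in general: from $W_0$ correctly aligned with $W_1$ under $f_0$ and $W_1$ with $W_2$ under $f_1$ one cannot conclude that $W_0$ is correctly aligned with $W_2$ under $f_1\circ f_0$, and indeed \cite{zgliczynski2004covering} never asserts this. The inductive variant you gesture at should instead be phrased via the disc-transport lemma, not via composing alignments. With these two points repaired, the sketch matches the argument in the cited reference.
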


The property of two windows being correctly aligned under a map is stable under perturbation in the sense of the next result (Theorem 13 of \cite{zgliczynski2004covering}).
\begin{theorem}\label{theorem_cawstable}
Suppose $W_1, W_2 \subset M$ are $(m_1, m_2)$ windows such that $W_1$ is correctly aligned with $W_2$ under a map $f \in C^0(M,M)$. Then there is an open neighbourhood $\U$ of $f$ in $C^0(M,M)$ with respect to the compact-open topology such that $W_1$ is correctly aligned with $W_2$ under $\tilde{f}$ for all $\tilde{f} \in \U$. 
\end{theorem}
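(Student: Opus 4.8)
\emph{Proof proposal.} The idea is that the homotopy witnessing correct alignment is stable under small perturbations of $f$, and that a witnessing homotopy for a nearby map $\tilde f$ can be obtained by prepending to the given one a short straight-line segment in the target chart. Concretely, let $H:[0,1]\times\widehat V_1\to\widehat V_2$ be the homotopy coming with the correct alignment of $W_1$ with $W_2$ under $f$, and write $\hat f=\chi_2^{-1}\circ f\circ\chi_1$ and $\hat{\tilde f}=\chi_2^{-1}\circ\tilde f\circ\chi_1$. For $\tilde f$ close enough to $f$ I would define
\begin{equation}
\widetilde H(s,z)=
\begin{cases}
(1-2s)\,\hat{\tilde f}(z)+2s\,\hat f(z), & s\in[0,1/2],\\[2pt]
H(2s-1,z), & s\in[1/2,1].
\end{cases}
\end{equation}
This is continuous, it satisfies $\widetilde H(0,\cdot)=\hat{\tilde f}$, and crucially $\widetilde H(1,\cdot)=H(1,\cdot)$, so part 2 of Definition~\ref{definition_caw} (the Brouwer-degree condition on $A_y(x)=\pi_1(\widetilde H(1,(x,y)))$ when $m_1>0$) is inherited verbatim from $f$; when $m_1=0$ one instead needs $\tilde f(W_1)\subset\mathrm{Int}(W_2)$, which persists for $\tilde f$ near $f$ because $f(W_1)$ is a compact subset of the open set $\mathrm{Int}(W_2)$.

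The substance is to check that the two avoidance conditions of part 1 of Definition~\ref{definition_caw} survive for $\widetilde H$, and here I would use compactness decisively. The sets $\chi_1^{-1}(W_1)=[0,1]^{m_1+m_2}$ and $\chi_1^{-1}(W_1^-)=\partial[0,1]^{m_1}\times[0,1]^{m_2}$ are compact, so $H([0,1],\chi_1^{-1}(W_1))$ and $H([0,1],\chi_1^{-1}(W_1^-))$ are compact subsets of the open set $\widehat V_2\subset\mathbb{R}^m$. Hence the numbers $\delta_0:=\mathrm{dist}\bigl(H([0,1],\chi_1^{-1}(W_1)),\,\mathbb{R}^m\setminus\widehat V_2\bigr)$, $\delta_1:=\mathrm{dist}\bigl(H([0,1],\chi_1^{-1}(W_1^-)),\,\chi_2^{-1}(W_2)\bigr)$, and $\delta_2:=\mathrm{dist}\bigl(H([0,1],\chi_1^{-1}(W_1)),\,\chi_2^{-1}(W_2^+)\bigr)$ are all strictly positive, the last two precisely because the avoidance conditions hold for $H$; set $\delta=\min\{\delta_0,\delta_1,\delta_2\}$.

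The last step is to convert the compact-open smallness of $\tilde f-f$ into a uniform bound on the coordinate representatives. Since $M$ is metrizable the compact-open topology on $C^0(M,M)$ agrees with uniform convergence on compacta, and $f(W_1)$ is a compact subset of the open set $V_2$; using the uniform continuity of $\chi_2^{-1}$ on a fixed compact neighbourhood of $f(W_1)$ in $V_2$, one obtains an open neighbourhood $\U$ of $f$ such that every $\tilde f\in\U$ has $\tilde f(W_1)\subset V_2$ and $\sup_{z\in[0,1]^{m_1+m_2}}\bigl|\hat{\tilde f}(z)-\hat f(z)\bigr|<\delta$. For such $\tilde f$ and $s\in[0,1/2]$ one has $|\widetilde H(s,z)-H(0,z)|\le|\hat{\tilde f}(z)-\hat f(z)|<\delta$ for all $z$; taking $\delta\le\delta_0$ this keeps $\widetilde H$ inside $\widehat V_2$, taking $\delta\le\delta_1$ gives $\widetilde H(s,z)\notin\chi_2^{-1}(W_2)$ for $z\in\chi_1^{-1}(W_1^-)$, and taking $\delta\le\delta_2$ gives $\widetilde H(s,z)\notin\chi_2^{-1}(W_2^+)$ for $z\in\chi_1^{-1}(W_1)$; on $s\in[1/2,1]$ all three hold because $\widetilde H$ coincides with $H$ there. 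Thus $\widetilde H$ witnesses the correct alignment of $W_1$ with $W_2$ under $\tilde f$ for every $\tilde f\in\U$.

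I expect the only delicate point to be the bookkeeping in this last step: ensuring that the perturbed map and the prepended segment stay inside the chart domains $V_2$ and $\widehat V_2$, and translating the abstract compact-open neighbourhood into the concrete uniform bound $\delta$ on $\hat{\tilde f}-\hat f$ over $[0,1]^{m_1+m_2}$. Everything else is a routine consequence of compactness and the fact that the degree/interior condition at time $1$ is left untouched by prepending a homotopy.
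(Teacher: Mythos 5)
The paper does not prove this statement; it is quoted verbatim as Theorem~13 of the cited reference~\cite{zgliczynski2004covering}, so there is no internal proof to compare against. Your argument is, however, the standard one for this kind of stability result, and it is essentially correct: prepending the straight-line homotopy from $\hat{\tilde f}$ to $\hat f$ leaves the time-$1$ map $H(1,\cdot)$ (hence the Brouwer-degree condition of part~2) untouched, and the avoidance conditions of part~1 survive because the compact sets $H\bigl([0,1],\chi_1^{-1}(W_1)\bigr)$ and $H\bigl([0,1],\chi_1^{-1}(W_1^-)\bigr)$ sit at positive distance from, respectively, $\mathbb{R}^m\setminus\widehat V_2$, $\chi_2^{-1}(W_2)$, and $\chi_2^{-1}(W_2^+)$.

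The one real gap is the domain of $\widetilde H$. Definition~\ref{definition_caw} requires $\hat{\tilde f}=\chi_2^{-1}\circ\tilde f\circ\chi_1$ to be defined on all of $\widehat V_1$, i.e.\ $\tilde f(V_1)\subseteq V_2$, and requires the witnessing homotopy to map $[0,1]\times\widehat V_1$ into $\widehat V_2$. You only control $\tilde f$ on $W_1$ (and the difference $\hat{\tilde f}-\hat f$ only on the compact cube $[0,1]^{m_1+m_2}$), which is not enough: since $V_1$ is open, no compact-open neighbourhood of $f$ can force $\tilde f(V_1)\subseteq V_2$ for the original $V_1$, nor keep the convex segment $(1-2s)\hat{\tilde f}(z)+2s\hat f(z)$ inside $\widehat V_2$ for $z$ ranging over all of $\widehat V_1$. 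The standard fix, which you gesture at but do not carry out, is to first shrink the parametrising neighbourhood: choose an open $\widehat V_1'\supset[0,1]^{m_1+m_2}$ with $\overline{\widehat V_1'}$ compact in $\widehat V_1$, set $V_1'=\chi_1(\widehat V_1')$, and note that $H$ restricted to $[0,1]\times\widehat V_1'$ still witnesses correct alignment. Now $\overline{V_1'}$ is compact, so a compact-open neighbourhood $\U$ of $f$ can guarantee both $\tilde f(\overline{V_1'})\subset V_2$ and $\sup_{z\in\overline{\widehat V_1'}}\lvert\hat{\tilde f}(z)-\hat f(z)\rvert<\delta$, after which your $\delta_0,\delta_1,\delta_2$ argument (with the distances recomputed over $\overline{\widehat V_1'}$ and $\chi_1^{-1}(W_1^-)$) goes through. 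With that adjustment, the proof is complete.
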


\subsection{Products of Windows} \label{section_productsofwindows}

Suppose $M_j$ is a manifold of dimension $k_j$ for $j=1,2$, and suppose the manifold
\begin{equation} \label{eq_productmanifold}
M = M_1 \times M_2
\end{equation}
is equipped with the product topology. For $j=1,2$ let $W_j \subset M_j$ be an $(m_j,n_j)$ window with parametrisation $\chi_j : \widehat{V}_j \to V_j$ where $m_j,n_j \in \mathbb{N}_0$ such that $k_j=m_j+n_j$. Suppose moreover
\begin{equation}
W_j^+ = \chi_j \left( [0,1]^{m_j} \times \partial [0,1]^{n_j} \right), \quad W_j^- = \chi_j \left( \partial [0,1]^{m_j} \times [0,1]^{n_j} \right).
\end{equation}
Now, the sets $\widehat{V} = \widehat{V}_1 \times \widehat{V}_2 \subset \mathbb{R}^{k_1} \times \mathbb{R}^{k_2}$ and $V = V_1 \times V_2 \subset M$ are open in the product topology, and $W=W_1 \times W_2 \subset V$. Define $\chi : \widehat{V} \to V$ by
\begin{equation}
\chi (x,y) = \left( \chi_1 (x), \chi_2 (y) \right)
\end{equation}
where $x \in \widehat{V}_1$, $y \in \widehat{V}_2$. Define the entry and exit sets of $W$ to be
\begin{equation} \label{eq_productentryexit}
\begin{dcases}
W^+ = \left( W_1^+ \times W_2 \right) \cup \left( W_1 \times W_2^+ \right) \\
W^- = \left( W_1^- \times W_2 \right) \cup \left( W_1 \times W_2^- \right).
\end{dcases}
\end{equation}
It can thus be seen that $W$ is an $(m_1 + m_2, n_1 + n_2)$ window in $M$ with parametrisation $\chi : \widehat{V} \to V$. In this case we say that $W$ is the product of the windows $W_1, W_2$, and we write $W = W_1 \times W_2$. 

Suppose now that $f \in C^0 (M,M)$, and for $(x,y) \in M_1 \times M_2$ we write
\begin{equation}
f(x,y) = \left( f_1 (x,y), f_2 (x,y) \right)
\end{equation}
where $f_j (x,y) \in M_j$. The following result was proved in \cite{gidea2006topological}.

\begin{theorem} \label{theorem_cawproduct}
Let $W = W_1 \times W_2$, $\widetilde{W} = \widetilde{W}_1 \times \widetilde{W}_2$ be $(m_1 + m_2, n_1 + n_2)$ windows in $M = M_1 \times M_2$. Suppose
\begin{enumerate}[(i)]
\item
$W_1$ is linearly correctly aligned with $\widetilde{W}_1$ under $f_1 ( \cdot, y)$ for each $y \in M_2$; and
\item
$W_2$ is linearly correctly aligned with $\widetilde{W}_2$ under $f_2 ( x, \cdot)$ for each $x \in M_1$.
\end{enumerate}
Then $W$ is correctly aligned with $\widetilde{W}$ under $f$. 
\end{theorem}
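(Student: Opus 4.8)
The plan is to produce the homotopy required by Definition~\ref{definition_caw} by amalgamating, factor by factor, the homotopies supplied by hypotheses~(i) and~(ii), and then to verify each clause of that definition ``block by block''. Denote by $\chi_j$ and $\widetilde\chi_j$ the given parametrisations of $W_j$ and $\widetilde W_j$, with domains $\widehat V_j$ and $\widehat V_j'$, and write points of $\widehat V_1\subset\mathbb R^{m_1}\times\mathbb R^{n_1}$ as $(a_1,b_1)$ (the exit coordinate $a_1$, the entry coordinate $b_1$), points of $\widehat V_2\subset\mathbb R^{m_2}\times\mathbb R^{n_2}$ as $(a_2,b_2)$; the product windows carry the parametrisations $\chi=\chi_1\times\chi_2$ and $\widetilde\chi=\widetilde\chi_1\times\widetilde\chi_2$, and after reordering coordinates the exit block of $W$ is $(a_1,a_2)$ and the entry block is $(b_1,b_2)$. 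For $y\in M_2$ let $H^{(1)}_y\colon[0,1]\times\widehat V_1\to\widehat V_1'$ realise the linear correct alignment of $W_1$ with $\widetilde W_1$ under $f_1(\cdot,y)$, so that $H^{(1)}_y(0,\cdot)=\widetilde\chi_1^{-1}\circ f_1(\cdot,y)\circ\chi_1$ and $H^{(1)}_y(1,(a_1,b_1))=(A^{(1)}_y a_1,0)$ with $A^{(1)}_y$ linear and $A^{(1)}_y(\partial[0,1]^{m_1})\subset\mathbb R^{m_1}\setminus[0,1]^{m_1}$; define $H^{(2)}_x,A^{(2)}_x$ symmetrically from~(ii). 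Throughout I assume, as holds in the applications and is implicit in the statement, that $y\mapsto H^{(1)}_y,A^{(1)}_y$ and $x\mapsto H^{(2)}_x,A^{(2)}_x$ are continuous. Then I would set
\[
 H\bigl(t,(a_1,b_1,a_2,b_2)\bigr)=\Bigl(H^{(1)}_{\chi_2(a_2,b_2)}\bigl(t,(a_1,b_1)\bigr),\ H^{(2)}_{\chi_1(a_1,b_1)}\bigl(t,(a_2,b_2)\bigr)\Bigr),
\]
a continuous map $[0,1]\times\widehat V\to\widehat V'$ with $\widehat V=\widehat V_1\times\widehat V_2$, $\widehat V'=\widehat V_1'\times\widehat V_2'$.

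I would then dispatch the three ``soft'' clauses. Unwinding definitions gives $H(0,\cdot)=\widehat f\coloneqq\widetilde\chi^{-1}\circ f\circ\chi$, because the first factor at $t=0$ is $\widetilde\chi_1^{-1}\bigl(f_1(\chi_1(a_1,b_1),\chi_2(a_2,b_2))\bigr)$ and similarly for the second. In window coordinates $\widetilde\chi^{-1}(\widetilde W)$ is the product cube, $\widetilde\chi^{-1}(\widetilde W^+)$ is the part of it on which the $b_1$- or $b_2$-block meets its boundary, $\chi^{-1}(W^-)$ is the part of the product cube on which the $a_1$- or $a_2$-block meets its boundary, and $\chi^{-1}(W)$ is the product cube. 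Hence if $a_1\in\partial[0,1]^{m_1}$ then for every $t$ the first factor of $H(t,\cdot)$ lies outside $\widetilde\chi_1^{-1}(\widetilde W_1)$ by the correct-alignment property of $H^{(1)}_{\chi_2(a_2,b_2)}$, so $H(t,\cdot)\notin\widetilde\chi^{-1}(\widetilde W)$; the case $a_2\in\partial[0,1]^{m_2}$ is symmetric, giving $H([0,1],\chi^{-1}(W^-))\cap\widetilde\chi^{-1}(\widetilde W)=\emptyset$. Likewise, a point of $H([0,1],\chi^{-1}(W))$ landing in $\widetilde\chi^{-1}(\widetilde W^+)$ would have one of its two factors in some $\widetilde\chi_j^{-1}(\widetilde W_j^+)$, contradicting the correct alignment on that factor, so $H([0,1],\chi^{-1}(W))\cap\widetilde\chi^{-1}(\widetilde W^+)=\emptyset$. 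All of this is mechanical once the coordinate bookkeeping is set.

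The one substantive clause is part~2 of Definition~\ref{definition_caw}. Fix any $\bar y=(\bar b_1,\bar b_2)\in[0,1]^{n_1+n_2}$ and put $A_{\bar y}(a_1,a_2)\coloneqq\pi_1 H\bigl(1,((a_1,a_2),(\bar b_1,\bar b_2))\bigr)$, with $\pi_1$ the projection onto the exit block; by linearity of the endpoints,
\[
 A_{\bar y}(a_1,a_2)=\bigl(A^{(1)}_{\chi_2(a_2,\bar b_2)}\,a_1,\ A^{(2)}_{\chi_1(a_1,\bar b_1)}\,a_2\bigr).
\]
On the face $a_1\in\partial[0,1]^{m_1}$ the first component lies outside $[0,1]^{m_1}$, and on the face $a_2\in\partial[0,1]^{m_2}$ the second lies outside $[0,1]^{m_2}$, so $A_{\bar y}(\partial[0,1]^{m_1+m_2})$ misses the unit cube, in particular misses $0$. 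For the degree, observe first that each $A^{(1)}_y$ is invertible: if it had a kernel it would send the vertex $0\in\partial[0,1]^{m_1}$ to $0\in[0,1]^{m_1}$, violating its boundary condition; hence $\deg(A^{(1)}_y,0)$ is the sign of $\det A^{(1)}_y$, equal to $\pm1$, and likewise for $A^{(2)}_x$. Then I would decouple the cross-dependence through
\[
 A^s_{\bar y}(a_1,a_2)=\bigl(A^{(1)}_{\chi_2(s a_2,\bar b_2)}\,a_1,\ A^{(2)}_{\chi_1(s a_1,\bar b_1)}\,a_2\bigr),\qquad s\in[0,1],
\]
which is continuous by the standing assumption and, by the same face-by-face argument (each $A^{(1)}_z(\partial[0,1]^{m_1})$ and $A^{(2)}_w(\partial[0,1]^{m_2})$ avoids $0$), never carries $\partial[0,1]^{m_1+m_2}$ to $0$. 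Therefore $\deg(A_{\bar y},0)=\deg(A^0_{\bar y},0)$, and $A^0_{\bar y}=A^{(1)}_{\chi_2(0,\bar b_2)}\times A^{(2)}_{\chi_1(0,\bar b_1)}$ is a genuine product of invertible linear maps, whose degree at $0$ is the product of the two nonzero signs, hence $\pm1\neq0$. (When $m_1=0$ or $m_2=0$ the corresponding block is a point and this argument is vacuous there.) This establishes that $W$ is correctly aligned with $\widetilde W$ under $f$.

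The main obstacle is precisely this last clause. The endpoint of the naïve product homotopy is \emph{not} a product of linear maps — the exit-direction linear map carried by each factor is evaluated at a point depending on the \emph{other} factor's exit coordinates — so its Brouwer degree cannot be read off directly and must be computed after first decoupling the two blocks by an auxiliary homotopy. Legitimising that homotopy (and indeed $H$ itself) is what forces the hypothesis that the per-parameter alignment data depend continuously on the parameter; all the remaining work is a careful transcription of the factorwise data through the product parametrisation.
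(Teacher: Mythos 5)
The paper does not prove this result; it cites \cite{gidea2006topological}, where the product lemma originates. Your proof is a faithful reconstruction of the argument there: the product homotopy $H$, the factorwise verification of the soft clauses, and in particular the auxiliary decoupling homotopy $A^s_{\bar y}$ for the degree clause are all the right moves. You correctly identify the substantive issue --- that the endpoint of the naïve product homotopy is not a product of linear maps because $A^{(1)}$ is evaluated at a point depending on the other factor's exit coordinate $a_2$, so its degree cannot be read off directly --- and the decoupling homotopy is exactly how the cited reference handles it. You are also right that continuity of $y\mapsto H^{(1)}_y$ and $x\mapsto H^{(2)}_x$ must be assumed (otherwise $H$ is not continuous); that hypothesis is implicit in the intended applications.

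One step, however, is wrong as written: the invertibility of $A^{(1)}_y$. You argue that a nontrivial kernel ``would send the vertex $0\in\partial[0,1]^{m_1}$ to $0\in[0,1]^{m_1}$,'' but a linear map sends $0$ to $0$ unconditionally, kernel or not, so this reasoning is circular. The underlying difficulty is that with the paper's literal parametrisation by $[0,1]^{m_1}$, the clause defining \emph{linear} correct alignment is unsatisfiable for a genuine linear $A$: since $0$ is a vertex of $[0,1]^{m_1}$ and $A(0)=0\in[0,1]^{m_1}$, the boundary condition $A\bigl(\partial[0,1]^{m_1}\bigr)\subset\mathbb{R}^{m_1}\setminus[0,1]^{m_1}$ fails automatically. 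The reference \cite{gidea2006topological} parametrises windows by cubes centred at the origin, so that $0$ is an interior point; with that (intended) convention, invertibility follows cleanly: if $Av=0$ for some $v\neq0$, rescale $v$ to lie on the boundary of the centred cube, and then $Av=0$ is an interior point, contradicting the boundary condition. You should state that you are using the centred-cube convention, since both this step and the repeated assertion that $A^s_{\bar y}$ never carries the boundary to $0$ rely on $0$ being interior; once that is made explicit, the proof is complete and matches the standard approach.
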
 

\begin{remark}
In general, the manifolds $M$ in our work have the structure \eqref{eq_productmanifold} only locally. However this is enough to apply Theorem \ref{theorem_cawproduct}, since two windows being correctly aligned under a map is a local property.
\end{remark}

\section{Coordinates and Estimates}\label{sec_coordsandestimates}

\subsection{A Suitable System of Coordinates}\label{section_linearisedcoords}

Suppose we are in the setting of Theorem \ref{theorem_main1}, so $M$ is a $C^r$ manifold of dimesion $2(m+n)$, and $F \in \Diff^r (M)$ has a normally hyperbolic invariant manifold $\Lambda \simeq \mathbb{T}^n \times [0,1]^n$ in $M$ satisfying [A1-3]. In order to construct correctly aligned windows, we need estimates for the map in a neighbourhood of the invariant manifolds $W^{s,u}(\Lambda)$. This requires an appropriate system of coordinates in which the twist property of the inner map $F|_{\Lambda}$ is apparent; Fenichel coordinates (described below; see \cite{jones2009generalized}) provide a starting point for the coordinate transformation. However, when we express the map $F$ in Fenichel coordinates, there are error terms that complicate the estimates. We therefore seek a further coordinate transformation in which these error terms can be ignored. There is an analogue of the Hartman-Grobman Theorem for normally hyperbolic invariant manifolds, which says that there is a neighbourhood of $\Lambda$ in which $F$ is topologically conjugate to its so-called \emph{normal map} \cite{pugh1970linearization}. These coordinates would be ideal for obtaining \emph{local} estimates in a neighbourhood of the cylinder (as in Section \ref{sec_shortsequence}), but later we need to analyse how the images of this coordinate chart fit together at a homoclinic point if we iterate it backwards (resp. forwards) along the stable (resp. unstable) manifold of $\Lambda$ (see Section \ref{sec_coordtransfhomoclinic}). This analysis requires a continuous second derivative; since the coordinates provided by \cite{pugh1970linearization} are purely topological, we instead follow Section 5.1 of \cite{gidea2006topological} to construct a suitable system of coordinates that are as smooth as required. 

Denote by $(q,p)$ the coordinates on $\Lambda$. Assuming $\Lambda$ is a $C^r$ submanifold of $M$, and the spectral gap is of size $r$ we can introduce, as in \cite{jones2009generalized}, $C^{r-1}$ coordinates $(s,u,q,p)$ in a neighbourhood $U_0$ of $\Lambda$, where $s,u$ belong to a neighbourhood of the origin in $\mathbb{R}^m$ such that:
\begin{itemize}
\item
$W^s_{\mathrm{loc}} (\Lambda) = \{u=0\}$;
\item
$W^u_{\mathrm{loc}} (\Lambda) = \{s=0\}$;
\item
$W^s_{\mathrm{loc}} (q_0,p_0) = \{u=0, \, (q,p)=(q_0,p_0) \}$; and
\item
$W^u_{\mathrm{loc}} (q_0,p_0) = \{s=0, \, (q,p)=(q_0,p_0) \}$.
\end{itemize}
In these coordinates the map $F$ takes the form
\begin{equation}
F: 
\begin{cases}
(\bar{q}, \bar{p}) \! \! \! \! \!&= f(q,p) + N_c(s,u,q,p)\\
\hphantom{(,)} \bar{s} \hphantom{\bar{p}} \! \! \! \! \! &= A_s(q,p) \, s + N_s (s,u,q,p) \\
\hphantom{(,)} \bar{u} \hphantom{\bar{p}} \! \! \! \! \! &= A_u(q,p) \, u + N_u (s,u,q,p)
\end{cases}
\end{equation}
where $f = F|_{\Lambda}$, $A_s(q,p) = D F (q,p)|_{E^s}$, $A_u(q,p) = D F (q,p)|_{E^u}$, and
\begin{equation}
N_c(0,u,q,p)=0=N_c(s,0,q,p), \quad N_s(0,u,q,p)=0=N_u(s,0,q,p). 
\end{equation}

In this paper we study the dynamics only in a small neighbourhood of the invariant manifolds $W^{s,u} (\Lambda)$. Therefore we can replace $F$ by a map $\widetilde{F}_a$ defined as follows. Choose a function $\psi \in C^{\infty} (\mathbb{R})$ such that $\psi (x) = 1$ if $|x| \leq 1$ and $\psi (x)=0$ if $|x| \geq 2$, and let $\psi_a(x) = \psi(ax)$. We then define the map
\begin{equation}\label{eq_lincoordsfatilde}
\widetilde{F}_a: 
\begin{cases}
(\bar{q}, \bar{p}) \! \! \! \! \!&= f(q,p) + \psi_a \left(s^2+u^2 \right) N_c(s,u,q,p)\\
\hphantom{(,)} \bar{s} \hphantom{\bar{p}} \! \! \! \! \! &= A_s(q,p) s + \psi_a \left(s^2+u^2 \right) N_s (s,u,q,p) \\
\hphantom{(,)} \bar{u} \hphantom{\bar{p}} \! \! \! \! \! &= A_u(q,p) u + \psi_a \left(s^2+u^2 \right) N_u (s,u,q,p).
\end{cases}
\end{equation}
Note that the map $\widetilde{F}_a$ is only $C^{r-1}$, since the coordinates $(s,u,q,p)$ are $C^{r-1}$. Moreover the normally hyperbolic invariant manifold $\Lambda$ is a $C^{r-1}$ submanifold, and the spectral gap remains of size $r$. The map $\widetilde{F}_a$ agrees with $F$ whenever $s^2 +u^2 \leq a^{-1}$. Furthermore the error terms of $\widetilde{F}_a$ are uniformly small in $C^0$, and can be made as small as necessary by increasing $a$. We thus guarantee, by choosing $a$ large enough, that the map $\widetilde{F}_a$ is globally defined on $\mathbb{T}^n \times [0,1]^n \times \mathbb{R}^{2m}$, since we have effectively set the error terms equal to 0 outside the domain of definition of $N_c, N_s, N_u$. Fix some sufficiently large $a$ once and for all, and define $\Phi=\widetilde{F}_a$. 

Since $\Phi$ is defined globally, the local unstable manifold $W^u_{\mathrm{loc}}(\Lambda)$ extends to a global unstable manifold $\widetilde{W}^u(\Lambda) = \bigcup_{n=0}^{\infty} \Phi^n \left( W^u_{\mathrm{loc}} \left( \Lambda \right) \right)$. On $\widetilde{W}^u(\Lambda)$ we have the strong unstable foliation, each leaf $\widetilde{W}^u(q,p)$ of which is uniquely determined by a point $(q,p) \in \Lambda$ in the usual way (see Section \ref{sec_prelimdefns} for definitions). Since $\widetilde{W}^u \left( \Lambda \right) = \{ s =0 \}$, the variables $(u,q,p)$ define coordinates on $\widetilde{W}^u \left( \Lambda \right)$. Now, it follows from \eqref{eq_lincoordsfatilde} that the stable manifold $\W^s = W^s \left( \widetilde{W}^u \left( \Lambda \right) \right)$ of the unstable manifold $\widetilde{W}^u \left( \Lambda \right)$ is the entire phase space $\Lambda \times \mathbb{R}^{2m}$. Moreover $\W^s$ admits a $C^{r-2}$ foliation by the leaves $\W^s (u,q,p) = W^s(0,u,q,p)$. Notice that the restriction of $\Phi$ to $\widetilde{W}^u \left( \Lambda \right)$ is 
\begin{equation}
\left. \Phi \right|_{\widetilde{W}^u \left( \Lambda \right)}: 
\begin{cases}
(\bar{q}, \bar{p}) \! \! \! \! \!&= f(q,p) \\
\hphantom{(,)} \bar{u} \hphantom{\bar{p}} \! \! \! \! \! &= A_u(q,p) u + \psi_a \left(u^2 \right) N_u (0,u,q,p).
\end{cases}
\end{equation}
We can now take a coordinate $s'$ on each $\W^s (u,q,p)$ so that $(s',u,q,p)$ are $C^{r-2}$ coordinates in a neighbourhood of $\Lambda$, and
\begin{equation}\label{eq_mapinappcoords}
\Phi: 
\begin{cases}
(\bar{q}, \bar{p}) \! \! \! \! \!&= f(q,p) \\
\hphantom{(,)} \bar{s}' \hphantom{\bar{p}} \! \! \! \! \! &= A_s(q,p) s' +  \widetilde{N}_s (s',u,q,p) \\
\hphantom{(,)} \bar{u} \hphantom{\bar{p}} \! \! \! \! \! &= A_u(q,p) u + \psi_a \left(u^2 \right) N_u (0,u,q,p)
\end{cases}
\end{equation}
where $\widetilde{N}_s$ is globally small, and $\widetilde{N}_s(0,u,q,p)=0$. Since $F$ agrees with $\Phi$ in a neighbourhood of $\Lambda$, it also takes the form \eqref{eq_mapinappcoords} in $(s',u,q,p)$ coordinates. From now on we drop the prime notation and write $(s,u,q,p)$ for this system of coordinates.

We have thus constructed $C^{r-2}$ coordinates in a neighbourhood of $\Lambda$ in which there are no error terms in the central directions, and the error terms in the hyperbolic directions are small: let $\delta_s, \delta_u>0$ such that
\begin{equation}
\sup_{(s,u,q,p)} \left\| \widetilde{N}_s (s,u,q,p) \right\| \leq \delta_s \| s \|, \quad \sup_{(u,q,p)} \left\| \psi_a \left( u^2 \right)N_u (0,u,q,p) \right\| \leq \delta_u \| u \|
\end{equation}
where we denote by $\| x \| = \max_{i=1, \ldots, m} |x_i|$ the maximum norm. Since $a$ is sufficiently large, $\delta_s, \delta_u$ are sufficiently small. Due to the normal hyperbolicity estimates \eqref{eq_normalhyperbolicitydef}, we find that
\begin{equation}
\tilde{\lambda}_- \| s \| \leq \left\| A_s (q,p) s + \widetilde{N}_s (s,u,q,p) \right\| \leq \tilde{\lambda}_+ \| s \|
\end{equation}
\begin{equation}
\tilde{\mu}_- \| u \| \leq \left\| A_u (q,p) u + \psi_a \left( u^2 \right) N(0,u,q,p) \vphantom{\widetilde{N}_s} \right\| \leq \tilde{\mu}_+ \| u \|
\end{equation}
 where $\tilde{\lambda}_{\pm} = \lambda_{\pm} \pm \delta_s$ and $\tilde{\mu}_{\pm} = \mu_{\pm} \pm \delta_u$. Since $\delta_{s,u}$ are sufficiently small, we still have 
 \begin{equation}
 0 < \tilde{\lambda}_- < \tilde{\lambda}_+ < \lambda_0^r < 1 < \mu_0^r < \tilde{\mu}_- < \tilde{\mu}_+.
 \end{equation}
 In this way we can use the linear estimates \eqref{eq_normalhyperbolicitydef} for the nonlinear map $F$ in the hyperbolic directions. We now drop the tilde notation, and simply write $\lambda_{\pm}, \mu_{\pm}$ for these adjusted hyperbolicity parameters. The following section deals with estimates in the central directions. 

\begin{remark}
Since the coordinates $(s,u,q,p)$ are $C^{r-2}$, and since we require the coordinate transformation near a homoclinic point (see Section \ref{sec_coordtransfhomoclinic}) to have two continuous derivatives, we take $r \geq 4$ in the statement of Theorem \ref{theorem_main1}. 
\end{remark}

We also require a system of coordinates near the homoclinic channel. Denote by $\U$ the neighbourhood of $\Lambda$ in which we have the $(s,u,q,p)$ coordinates, by $\widehat{\U}$ the neighbourhood of $\mathbb{T}^n \times [0,1]^n \times \{ 0 \}$ in $\mathbb{T}^n \times [0,1]^n \times \mathbb{R}^{2m}$ to which the $(s,u,q,p)$ variables belong, and by $h: \U \to \widehat{\U}$ the $C^2$ coordinate transformation constructed above. Let $x \in \Gamma \subset \left( W^s (\Lambda) \pitchfork W^u (\Lambda) \right) \setminus \Lambda$ be a transverse homoclinic point. Then there are $N_{\pm} \in \mathbb{N}$ such that
\begin{equation}
F^{N_+} (x) \in W^s_{\mathrm{loc}}(\Lambda) \cap \U, \quad F^{-N_-} (x) \in W^u_{\mathrm{loc}}(\Lambda) \cap \U.
\end{equation}
Choose a neighbourhood $V$ of $x$ in $M$ such that $F^{N_+}(V) \subset \U$ and $F^{-N_-}(V) \subset \U$. We define two coordinate systems $(s^+,u^+,q^+,p^+)$ and $(s^-,u^-,q^-,p^-)$ on $V$ via the diffeomorphisms $h^{\pm} :V \to \widehat{\U}$ defined by
\begin{equation}
h^+ = \Phi^{-N_+} \circ h \circ F^{N_+}, \quad h^- = \Phi^{N_-} \circ h \circ F^{-N_-}.
\end{equation}
From $(s^+,u^+,q^+,p^+)$ coordinates to $(s,u,q,p)$ coordinates, the map $F^{N_+}$ acts as $\Phi^{N_+}$, in the sense that
\begin{equation}
h \circ F^{N_+} \circ \left( h^+ \right)^{-1} = \Phi^{N_+}. 
\end{equation}
Similarly from $(s^-,u^-,q^-,p^-)$ coordinates to $(s,u,q,p)$ coordinates, the map $F^{-N_-}$ acts as $\Phi^{-N_-}$, in the sense that
\begin{equation}
h \circ F^{-N_-} \circ \left( h^- \right)^{-1} = \Phi^{-N_-}.
\end{equation}

\subsection{Estimates on the Shearing of a Window by the Twist Map}

Recall the inner map $f = F|_{\Lambda}$ is given by \eqref{eq_inttwistmap}, and we assume moreover the twist condition \eqref{eq_twistcondition}, where $\tilde{T}_- = \epsilon^{\tau} T_-$ for some $\tau \in \mathbb{N}_0$. We use the maximum norm: 
\begin{equation}\label{eq_maxnormdef}
\| x \| = \max_{i=1, \ldots, n} |x_i|.
\end{equation}
Fix $R>0$ with the following property. For any $p^0,p \in [0,1]^n$ we can write
\begin{equation}
g(p) = g(p^0) + D g (p^0) (p-p^0) + R_* (p^0,p)
\end{equation}
where $R_* \left(p^0,p \right)$ is a remainder term of order $O \left(\left\| p - p^0 \right\|^2 \right)$. Then the constant $R$ is chosen so that
\begin{equation}\label{eq_constr}
\left\| R_* \left(p^0,p \right) \right\| \leq R \left\| p - p^0 \right\|^2
\end{equation}
for all $p^0,p \in [0,1]^n$. 

Define a window $W = [Q \times P] \subset \Lambda$ where 
\begin{equation}
Q = [a, a + \gamma]^n \subset \mathbb{T}^n, \quad P = [b, b + \delta]^n \subset [0,1]^n. 
\end{equation}
Choose the exit set $W^-$ to be, say,
\begin{equation}
W^- = Q \times \partial P.
\end{equation}
The following estimates apply equally if the exit set is chosen to be in the $q$-direction. 

For each $j=1, \ldots, n$ define
\begin{equation}
B^0_j = [b, b+ \delta]^{j-1} \times \{ b \} \times [b, b+ \delta]^{n-j}, \quad B^1_j = [b, b+ \delta]^{j-1} \times \{ b+ \delta \} \times [b, b+ \delta]^{n-j}
\end{equation}
and let
\begin{equation}
E^0_j = Q \times B^0_j, \quad E^1_j = Q \times B^1_j
\end{equation}
be the corresponding components of the exit set, in the sense that
\begin{equation}
W^- = \bigcup_{j=1}^n \left( E^0_j \cup E^1_j \right).
\end{equation}
Suppose now that $N \in \mathbb{Z}$, and suppose we take an iterate $f^N (W)$ of $W$. Define
\begin{equation}
\Delta_j^N = \min_{(q^0,p^0) \in E^0_j} \min_{(q^1,p^1) \in E^1_j} \left\| \pi \circ f^N (q^1, p^1) - \pi \circ f^N (q^0, p^0) \right\|
\end{equation}
and
\begin{equation}
\Omega^N = \max_{(q^0,p^0),(q^1,p^1) \in W} \left\| \pi \circ f^N (q^1, p^1) - \pi \circ f^N (q^0, p^0) \right\|
\end{equation}
where $\pi : \mathbb{T}^n \times [0,1]^n \to \mathbb{T}^n$ is the canonical projection. The following lemma gives lower and upper bounds on the shearing of the window $W$ under $f^N$. 

\begin{lemma}\label{lemma_shearingestimates}
There is a positive constant $C$ such that
\begin{equation}\label{eq_shearboundlower}
\Delta_j^N \geq \epsilon^{\tau} |N| T_- \delta - |N| R \delta^2 - \gamma - C |N|^2 \epsilon^k
\end{equation}
and
\begin{equation}\label{eq_shearboundupper}
\Omega^N \leq \gamma + |N| T_+ \delta + C |N|^2 \epsilon^k
\end{equation}
where $\epsilon^{\tau}$ is the order of the twist condition, $\epsilon^k$ is the order of the error terms in the definition \eqref{eq_inttwistmap} of $f$, and the positive constant $R$ is defined by \eqref{eq_constr}. 
\end{lemma}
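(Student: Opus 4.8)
The plan is to first establish, by induction on $N$, a closed-form approximation for $\pi\circ f^N$ on $\Lambda$, and then to compare two orbits by subtraction, using the Taylor expansion of $g$ together with the twist inequality \eqref{eq_twistcondition}.

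First I would write $f(q,p)=(q+g(p)+a(q,p),\,p+b(q,p))$ where, by Definition \ref{def_nearlyintegrabletwist}, $\|a\|_{C^1},\|b\|_{C^1}\le c\,\epsilon^k$ for some $\epsilon$-independent constant $c$, and, writing $f^i(q,p)=(q_i,p_i)$, I would prove by induction that $p_i=p_0+O(i\epsilon^k)$ and $q_i=q_0+\sum_{l=0}^{i-1}g(p_l)+O(i\epsilon^k)$ with $\epsilon$-independent implied constants; the inductive step simply adds one more error term of size $O(\epsilon^k)$ in each component. Since then $\|p_l-p_0\|\le l\,c\,\epsilon^k$ and $g$ is Lipschitz with constant $T_+$ (the right-hand inequality in \eqref{eq_twistcondition}), one gets $g(p_l)=g(p_0)+O(l\epsilon^k)$, hence $\sum_{l=0}^{i-1}g(p_l)=i\,g(p_0)+O(i^2\epsilon^k)$, so that $\pi\circ f^N(q,p)=q+Ng(p)+O(N^2\epsilon^k)$ for $N\ge 0$. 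For $N<0$ I would observe that $f^{-1}$ is again a near-integrable twist map, with $g$ replaced by $-g$ (hence the same twist parameters and the same remainder constant $R$) and with error terms still of size $O(\epsilon^k)$ in $C^1$ by the quantitative inverse function theorem; applying the previous step to $f^{-1}$ then gives $\pi\circ f^N(q,p)=q+Ng(p)+O(|N|^2\epsilon^k)$ for all $N\in\mathbb Z$, with an $\epsilon$-independent constant $C$ depending only on $T_+$ and $c$.

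With this approximation, for any $(q^0,p^0),(q^1,p^1)$ I would subtract to get
\[
\pi\circ f^N(q^1,p^1)-\pi\circ f^N(q^0,p^0)=(q^1-q^0)+N\big(g(p^1)-g(p^0)\big)+E,\qquad \|E\|\le C|N|^2\epsilon^k,
\]
and expand $g(p^1)-g(p^0)=Dg(p^0)(p^1-p^0)+R_*(p^0,p^1)$ with $\|R_*\|\le R\|p^1-p^0\|^2$ by \eqref{eq_constr}. For \eqref{eq_shearboundlower}, points of $E_j^0$ and $E_j^1$ satisfy $\|q^1-q^0\|\le\gamma$ and $\|p^1-p^0\|=\delta$ in the maximum norm (the $j$-th coordinates differ by exactly $\delta$, the others by at most $\delta$), so $\|Dg(p^0)(p^1-p^0)\|\ge\epsilon^\tau T_-\delta$ by the left-hand inequality in \eqref{eq_twistcondition}, and the reverse triangle inequality followed by minimising over $E_j^0\times E_j^1$ gives the claimed lower bound. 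For \eqref{eq_shearboundupper}, points of $W$ satisfy $\|q^1-q^0\|\le\gamma$ and $\|g(p^1)-g(p^0)\|\le T_+\delta$, and the triangle inequality followed by maximising over $W$ gives the claimed upper bound.

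The only delicate point is the inductive estimate: I must check that the accumulated error after $N$ steps is genuinely $O(N^2\epsilon^k)$ rather than worse. The linear-in-$N$ part comes from summing the $O(\epsilon^k)$ error committed at each step, and the extra factor of $N$ comes from the Lipschitz spreading of $g$ evaluated along an orbit whose $p$-coordinate has already drifted by $O(N\epsilon^k)$; both contributions must be controlled with $\epsilon$-independent constants, which is exactly where the uniform $C^1$ bound on the error terms of $f$ is used (and also where it is used to invert $f$ when $N<0$). The remaining steps are routine triangle inequalities together with the already-available Taylor bound \eqref{eq_constr}.
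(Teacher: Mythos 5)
Your proof is correct and follows essentially the same route as the paper's: compare two orbits by writing $\pi\circ f^N(q,p)=q+Ng(p)+O(N^2\epsilon^k)$, subtract, expand $g$ to second order, and apply the twist bounds and the triangle/reverse-triangle inequality. The only difference is that you supply the inductive verification of the iteration estimate (and the observation that $f^{-1}$ is again a near-integrable twist map, to handle $N<0$), whereas the paper states the $O(N^2\epsilon^k)$ bound without further justification.
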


\begin{proof}
We establish \eqref{eq_shearboundlower} for the case $j=1$ as the remaining cases are analogous. Fix $q^0,q^1 \in Q$ and $p^0_*, p^1_* \in [b, b + \delta]^{n-1}$. Define
\begin{equation}
p^0 = (b, p^0_*), \quad p^1 = (b+ \delta, p^1_*)
\end{equation}
Notice that
\begin{equation}
\| p^1 - p^0 \| = \delta. 
\end{equation}
Write
\begin{equation}
(\bar{q}^j, \bar{p}^j) = f^N (q^j, p^j)
\end{equation}
for $j=0,1$ so that
\begin{equation}
\bar{q}^j = q^j + N g(p^j) + O \left(N^2 \epsilon^k \right), \quad \bar{p}^j = p^j + O(N \epsilon^k). 
\end{equation}
Let $C$ be a uniform upper bound on the terms of order $\epsilon^k$. Then we have
\begin{align}
\left\| \bar{q}^1 - \bar{q}^0 \right\| \geq{}& |N| \left\| g(p^1) - g(p^0) \right\| - \left\| q^1 - q^0 \right\| - C |N|^2 \epsilon^k \\
\geq{}& |N| \left\| D g (p^0) (p^1-p^0) + R_* (p^0,p^1) \right\| - \gamma - C |N|^2 \epsilon^k \\
\geq{}& \epsilon^{\tau} |N| T_- \delta - |N| R \delta^2 - \gamma - C |N|^2 \epsilon^k.
\end{align}

In order to prove \eqref{eq_shearboundupper}, fix $(q^j,p^j) \in W$ and let $(\bar{q}^j, \bar{p}^j) = f^N (q^j, p^j)$ for $j=0,1$. Then
\begin{align}
\left\| \bar{q}^1 - \bar{q}^0 \right\| \leq{}& \left\| q^1 - q^0 \right\| + |N| \left\| g(p^1) - g(p^0) \right\| + C |N|^2 \epsilon^k \\
\leq{}& \gamma + |N| T_+ \delta + C |N|^2 \epsilon^k.
\end{align}
\end{proof}

\section{Proof of Theorem \ref{theorem_main1}} \label{sec_proofthm1}

Denote by $\L_1, \ldots, \L_N$ a sequence of leaves of the foliation of $\Lambda$ with transverse homoclinic connections, as in the statement of Theorem \ref{theorem_main1}. Let $x_i \in \Gamma$ be a point in the homoclinic channel such that $\pi^s (x_i) \in \L_i$ and $\pi^u (x_i) \in \L_{i+1}$. The proof of Theorem \ref{theorem_main1} is divided into 3 parts. In the first part of the proof, we show how to construct a short chain of correctly aligned windows beginning at the homoclinic point $x_i$, moving near to $\L_i$, and ending at the homoclinic point $x_{i+1}$. In the second part of the proof we show that we can continue the sequence of windows across a transverse homoclinic intersection in order to glue together two short sequences. In the final part of the proof, it is shown that we can consistently choose the aspect ratios of the windows in a way that allows us to continue the sequence indefinitely. 

\subsection{Construction of a Short Sequence of Correctly Aligned Windows}\label{sec_shortsequence}

Throughout the proof we use the coordinates $(s,u,q,p)$ constructed in Section \ref{section_linearisedcoords} near the normally hyperbolic invariant manifold $\Lambda$, as well as the coordinates $(s^+,u^+,q^+,p^+)$ and $(s^-,u^-,q^-,p^-)$, defined in Section \ref{section_linearisedcoords}, near the homoclinic channel $\Gamma$. 

This part of the proof is performed in 3 steps. In the first step, we construct a window $W_i$ in $(s^+,u^+,q^+,p^+)$ coordinates centred at $x_i \in \Gamma$, and a window $\widetilde{W}_i$ in $(s,u,q,p)$ coordinates centred at $f^{N_i}(y_i^s)$ where $y_i^s  = \pi^s (x_i)$ such that $W_i$ is correctly aligned with $\widetilde{W}_i$ under $F^{N_i}$ for some $N_i \in \mathbb{N}$. In the second step, we construct a window $\widehat{W}_i$ in $(s,u,q,p)$ coordinates centred at $f^{N_i+K_i}(y_i^s)$ such that $\widetilde{W}_i$ is correctly aligned with $\widehat{W}_i$ under $F^{K_i}$ for some $K_i \in \mathbb{N}$. In the third step, we construct a window $W_i'$ in $(s^-,u^-,q^-,p^-)$ coordinates, centred at the transverse homoclinic point $x_{i+1} \in \Gamma$ such that $\widehat{W}_i$ is correctly aligned with $W_i'$ under $F^{M_i}$ for some $M_i \in \mathbb{N}$. 

In each step, the windows are defined as a product of two constituent windows. We ensure that the windows we construct are correctly aligned by ensuring that the constituent windows are \emph{linearly} correctly aligned (see Definition \ref{definition_caw} and Theorem \ref{theorem_cawproduct}). In the hyperbolic directions (i.e. the $s,u$ variables) we simply use the normal hyperbolicity estimates \eqref{eq_normalhyperbolicitydef}. In the centre directions (i.e. the $q,p$ variables) we use the shearing estimates provided by Lemma \ref{lemma_shearingestimates}. Moreover at each step we state the entry and exit sets of the constituent windows, in which case the entry and exit sets of the product of these windows is given by \eqref{eq_productentryexit}. 

Finally, let us define the \emph{centre} of a window $W$ in $(s,u,q,p)$ coordinates. Suppose we define a window
\begin{equation}
W = \left[ S \times U \right] \times \left[ Q \times P \right]
\end{equation}
where $S,U,Q,P$ is a rectangle in $s,u,q,p$ respectively, and suppose the rectangles have size
\begin{align}
\left| S \right| ={}& \max_{s_1,s_2 \in S} \| s_1 - s_2 \| =  \alpha, \quad \left| U \right| = \max_{u_1,u_2 \in U} \| u_1 - u_2 \| =  \beta, \\
\left| Q \right| ={}& \max_{q_1,q_2 \in Q} \| q_1 - q_2 \| = \gamma, \quad \left| P \right| = \max_{p_1,p_2 \in P} \| p_1 - p_2 \| = \delta,
\end{align}
where the maximum norm $\| . \|$ is defined by \eqref{eq_maxnormdef}. The \emph{centre} of $W$ is the point 
\begin{equation}
c^0 = (s^0, u^0, q^0, p^0)
\end{equation}
such that
\begin{align}
S ={}& \left[ s^0_1 - \frac{\alpha}{2}, s^0_1 + \frac{\alpha}{2} \right] \times \cdots \times \left[ s^0_m - \frac{\alpha}{2}, s^0_m + \frac{\alpha}{2} \right], \\
U ={}& \left[ u^0_1 - \frac{\beta}{2}, u^0_1 + \frac{\beta}{2} \right] \times \cdots \times \left[ u^0_m - \frac{\beta}{2}, u^0_m + \frac{\beta}{2} \right], \\
Q ={}& \left[ q^0_1 - \frac{\gamma}{2}, q^0_1 + \frac{\gamma}{2} \right] \times \cdots \times \left[ q^0_n - \frac{\gamma}{2}, q^0_n + \frac{\gamma}{2} \right], \\
P ={}& \left[ p^0_1 - \frac{\delta}{2}, p^0_1 + \frac{\delta}{2} \right] \times \cdots \times \left[ p^0_n - \frac{\delta}{2}, p^0_n + \frac{\delta}{2} \right]. 
\end{align}

\subsubsection{Step 1}

Define a window $W_i$, centred at the homoclinic point $x_i \in \Gamma$ and given in $(s^+,u^+,q^+,p^+)$ coordinates by 
\begin{equation}
W_i = \left[S_i \times U_i \right] \times \left[Q_i \times P_i \right]
\end{equation}
where $S_i, U_i, Q_i, P_i$ are rectangles in $s^+, u^+, q^+, p^+$ respectively, with sizes
\begin{equation}
|S_i| = \alpha_i, \quad |U_i| = \beta_i, \quad|Q_i| = \gamma_i, \quad |P_i| = \delta_i. 
\end{equation}
We choose the exit sets to be
\begin{equation}
[S_i \times U_i]^- = \hphantom{^-} S_i \times \partial U_i, \quad [Q_i \times P_i]^- = \hphantom{^-} Q_i \times \partial P_i. 
\end{equation}
Now choose $N_i \in \mathbb{N}$ such that $F^{N_i} (x_i) \in W^s_{\mathrm{loc}} (\Lambda) \cap \U$ where $\U$ is the neighbourhood of $\Lambda$ in which $(q,p,s,u)$ coordinates are defined. Let
\begin{equation}
\nu_i = d_{W^s (\Lambda)} (x_i, y_i^s),
\end{equation} 
where $d_{W^s (\Lambda)}$ is the distance measured along the stable manifold, and $y_i^s = \pi^s (x_i)$. Then
\begin{equation}
\nu_i \lambda_-^{N_i} \leq d_{W^s(\Lambda)} (F^{N_i}(x_i), F^{N_i}(y_i^s)) \leq \nu_i \lambda_+^{N_i}
\end{equation}
and moreover by \eqref{eq_normalhyperbolicitydef} and Lemma \ref{lemma_shearingestimates}:
\begin{itemize}
\item
In the $s$-direction, $W_i$ is contracted by $F^{N_i}$ to a size between $\alpha_i \lambda_-^{N_i}$ and $\alpha_i \lambda_+^{N_i}$;
\item
In the $u$-direction, $W_i$ is expanded by $F^{N_i}$ to a size between $\beta_i \mu_-^{N_i}$ and $\beta_i \mu_+^{N_i}$;
\item
In the $q$-direction, $W_i$ is sheared by the twist map to a size at most $\gamma_i + N_i T_+ \delta_i + C N_i \epsilon^k$; and
\item
In the $p$-direction, the change is $\pm C N_i \epsilon^k$. 
\end{itemize}

\begin{figure}[h]
    \centering
    \begin{subfigure}[b]{0.45\textwidth}
        \includegraphics[width=\textwidth]{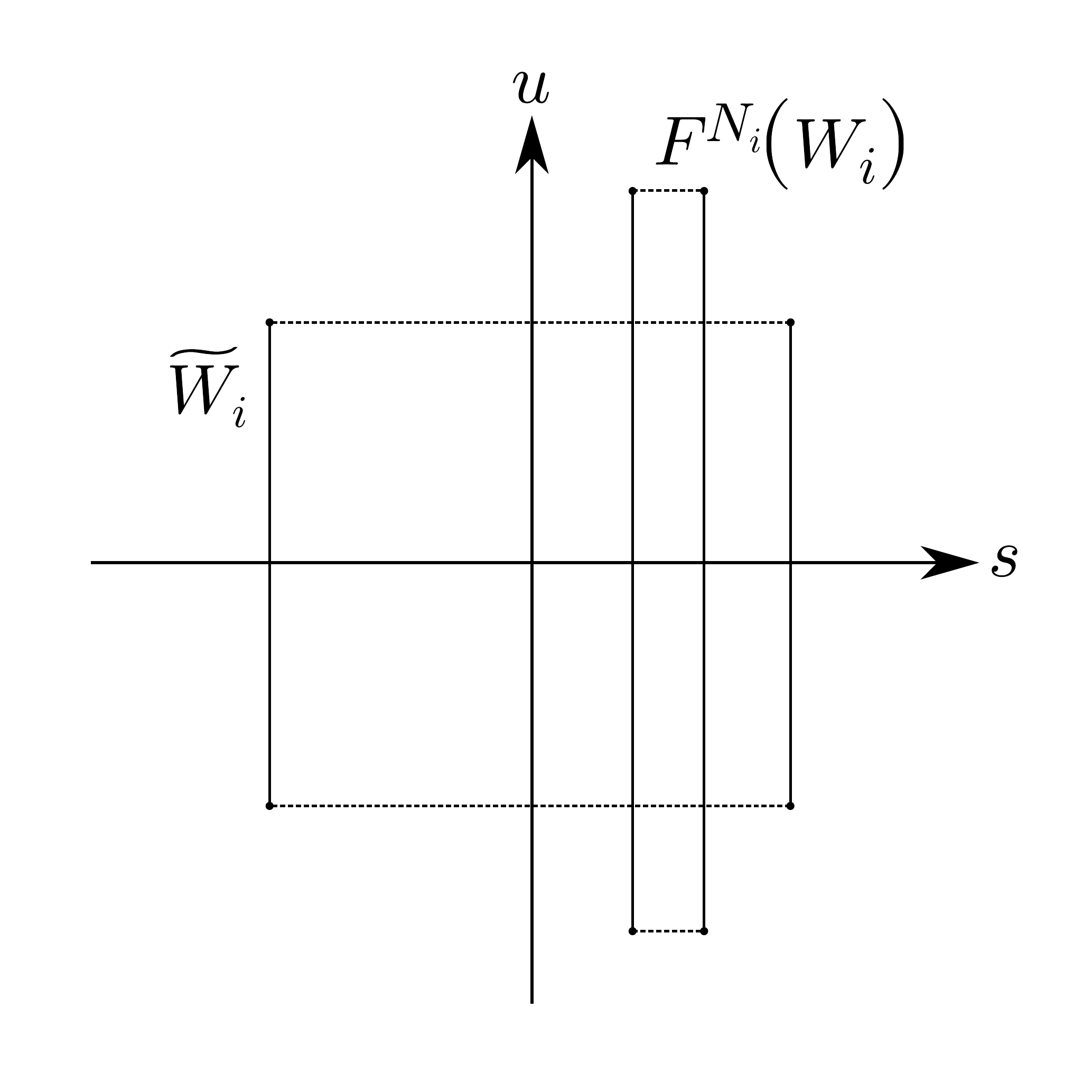}
        \caption{}
        \label{figure_step1}
    \end{subfigure} 
    \begin{subfigure}[b]{0.45\textwidth}
        \includegraphics[width=\textwidth]{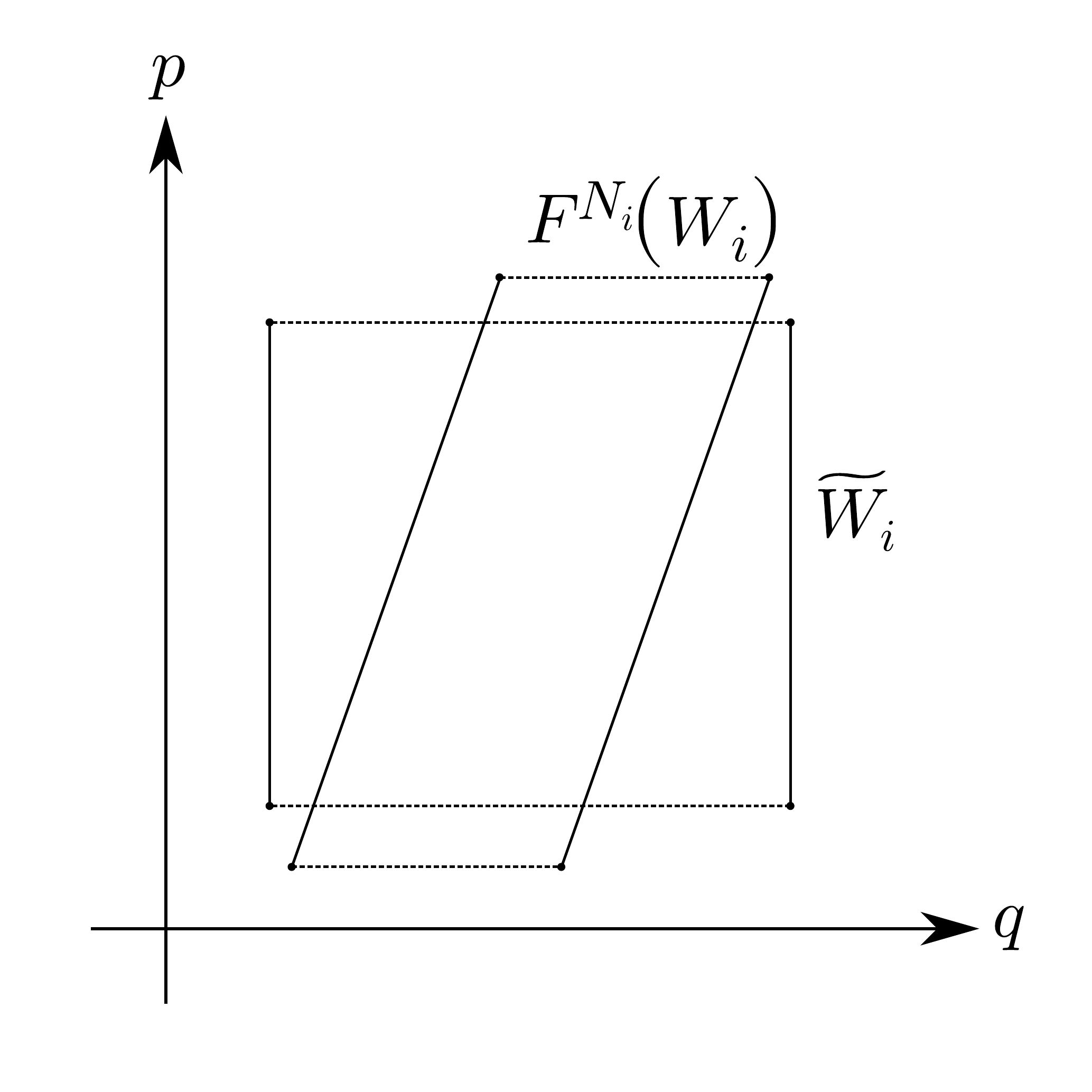}
        \caption{}
        \label{figure_step1}
    \end{subfigure}
    \caption{Step 1. The dashed edges represent exit sets.}\label{figure_step1}
\end{figure}

Define a window $\widetilde{W}_i$ centred at $f^{N_i}(y_i^s) \in \Lambda$ and given in $(s,u,q,p)$ coordinates by 
\begin{equation}
\widetilde{W}_i = \left[\widetilde{S}_i \times \widetilde{U}_i \right] \times \left[\widetilde{Q}_i \times \widetilde{P}_i \right]
\end{equation}
where $\widetilde{S}_i, \widetilde{U}_i, \widetilde{Q}_i, \widetilde{P}_i$ are rectangles in $s,u,q,p$ respectively, and where we choose the exit sets to be 
\begin{equation}
\left[\widetilde{S}_i \times \widetilde{U}_i \right]^- = \hphantom{^-} \widetilde{S}_i \times \partial \widetilde{U}_i, \quad \left[\widetilde{Q}_i \times \widetilde{P}_i \right]^- = \hphantom{^-} \widetilde{Q}_i \times \partial \widetilde{P}_i.
\end{equation}
If we choose the sizes of the constituent rectangles to be
\begin{equation}
\left| \widetilde{S}_i \right| = \tilde{\alpha}_i, \quad \left| \widetilde{U}_i \right| = \tilde{\beta}_i, \quad \left| \widetilde{Q}_i \right| = \tilde{\gamma}_i, \quad \left| \widetilde{P}_i \right| = \tilde{\delta}_i
\end{equation}
where
\begin{align}
\tilde{\alpha}_i \hphantom{^-} >& \hphantom{^-} \left(\alpha_i + 2 \nu_i \right) \lambda_+^{N_i}, \label{eq_step1_ineqs1} \\
\tilde{\beta}_i \hphantom{^-} <& \hphantom{^-} \beta_i \mu_-^{N_i}, \label{eq_step1_ineqs2} \\
\tilde{\gamma}_i \hphantom{^-} >& \hphantom{^-} \gamma_i + N_i T_+ \delta_i + C N_i^2 \epsilon^k, \label{eq_step1_ineqs3} \\
\tilde{\delta}_i \hphantom{^-} <& \hphantom{^-} \delta_i - CN_i \epsilon^k, \label{eq_step1_ineqs4}
\end{align}
then $W_i$ is correctly aligned with $\widetilde{W}_i$ under $F^{N_i}$ by Theorem \ref{theorem_cawproduct}. In Section \ref{section_proof1_part3} we explain how to choose $\alpha_i, \beta_i, \gamma_i, \delta_i$ and $N_i$ so that inequalities \eqref{eq_step1_ineqs1}, \eqref{eq_step1_ineqs2}, \eqref{eq_step1_ineqs3}, \eqref{eq_step1_ineqs4} are solvable. 

\begin{remark}\label{remark_whynuhas2}
The reason $\nu_i$ carries a factor of 2 in \eqref{eq_step1_ineqs1} is the following. Since the hyperbolic rectangle of $F^{N_i}\left(W_i \right)$ lies to one side of $\{ s=0 \}$ (see Figure \ref{figure_step1} (a)) we must choose $\tilde{\alpha}_i$ so that $\frac{\tilde{\alpha}_i}{2}$ is greater than the distance from $\{ s=0 \}$ to the outermost point on $F^{N_i}\left(W_i \right)$ in the $s$-direction, which is at most $\lambda_+^{N_i} \left( \frac{\alpha_i}{2} + \nu_i \right)$. 
\end{remark}

\subsubsection{Step 2}

Take a forward iterate $F^{K_i} \left( \widetilde{W}_i \right)$ of $\widetilde{W}_i$ that brings the centre $f^{N_i} \left(y_i^s \right)$ of $\widetilde{W}_i$ to $f^{N_i + K_i} \left(y_i^s \right)$. Define a window $\widehat{W}_i$ centred at $f^{N_i + K_i} \left(y_i^s \right)$, and given in $(s,u,q,p)$ coordinates by
\begin{equation}
\widehat{W}_i = \left[ \widehat{S}_i \times \widehat{U}_i \right] \times \left[ \widehat{Q}_i \times \widehat{P}_i \right]
\end{equation}
where $\widehat{S}_i, \widehat{U}_i, \widehat{Q}_i, \widehat{P}_i$ are rectangles in $s,u,q,p$ respectively, and where we choose the exit sets to be
\begin{equation}
\left[ \widehat{S}_i \times \widehat{U}_i \right]^- = \hphantom{^-} \widehat{S}_i \times \partial \widehat{U}_i, \quad \left[ \widehat{Q}_i \times \widehat{P}_i \right]^- = \hphantom{^-} \partial \widehat{Q}_i \times \widehat{P}_i.
\end{equation}

\begin{figure}[h]
    \centering
    \begin{subfigure}[b]{0.45\textwidth}
        \includegraphics[width=\textwidth]{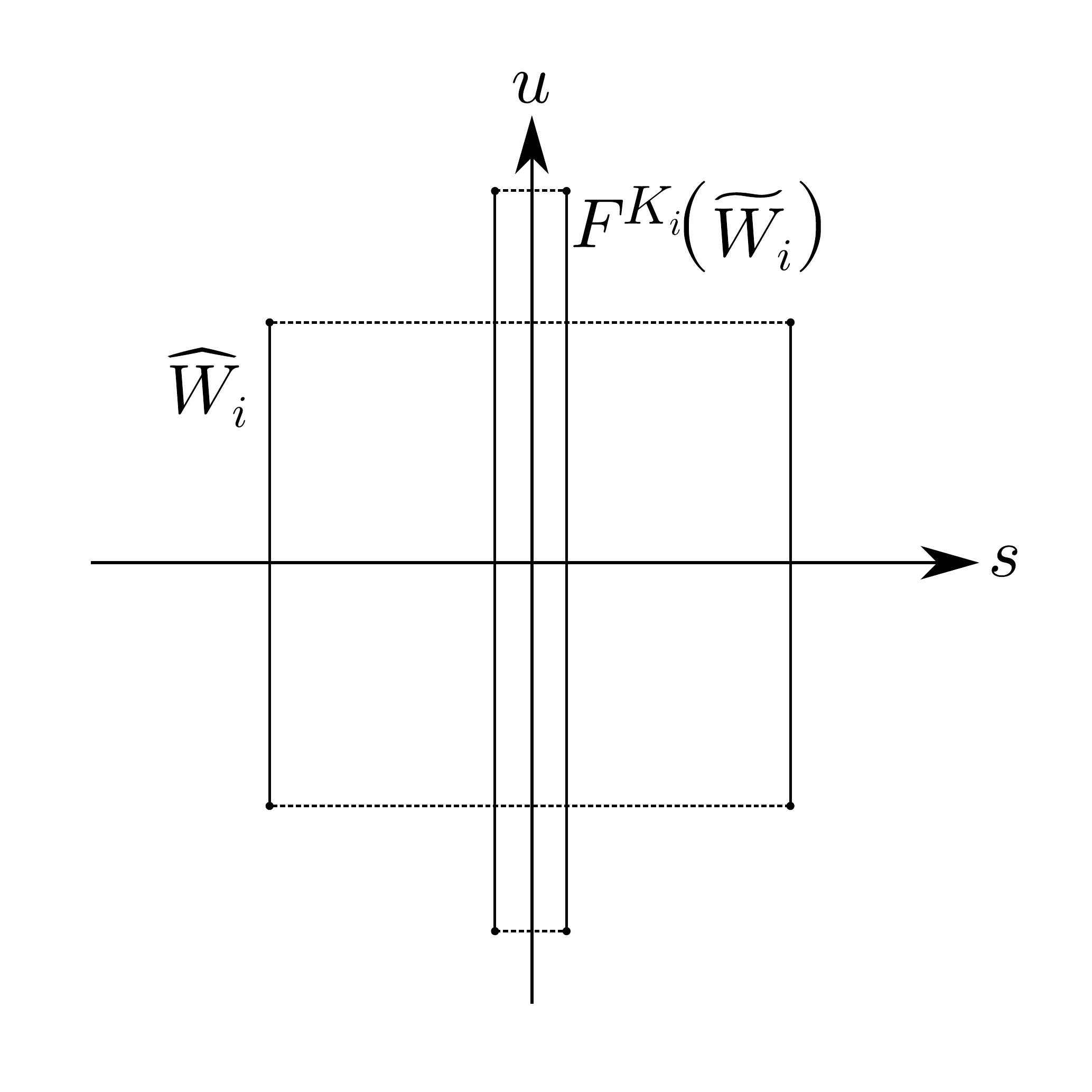}
        \caption{}
        \label{figure_step2}
    \end{subfigure} 
    \begin{subfigure}[b]{0.45\textwidth}
        \includegraphics[width=\textwidth]{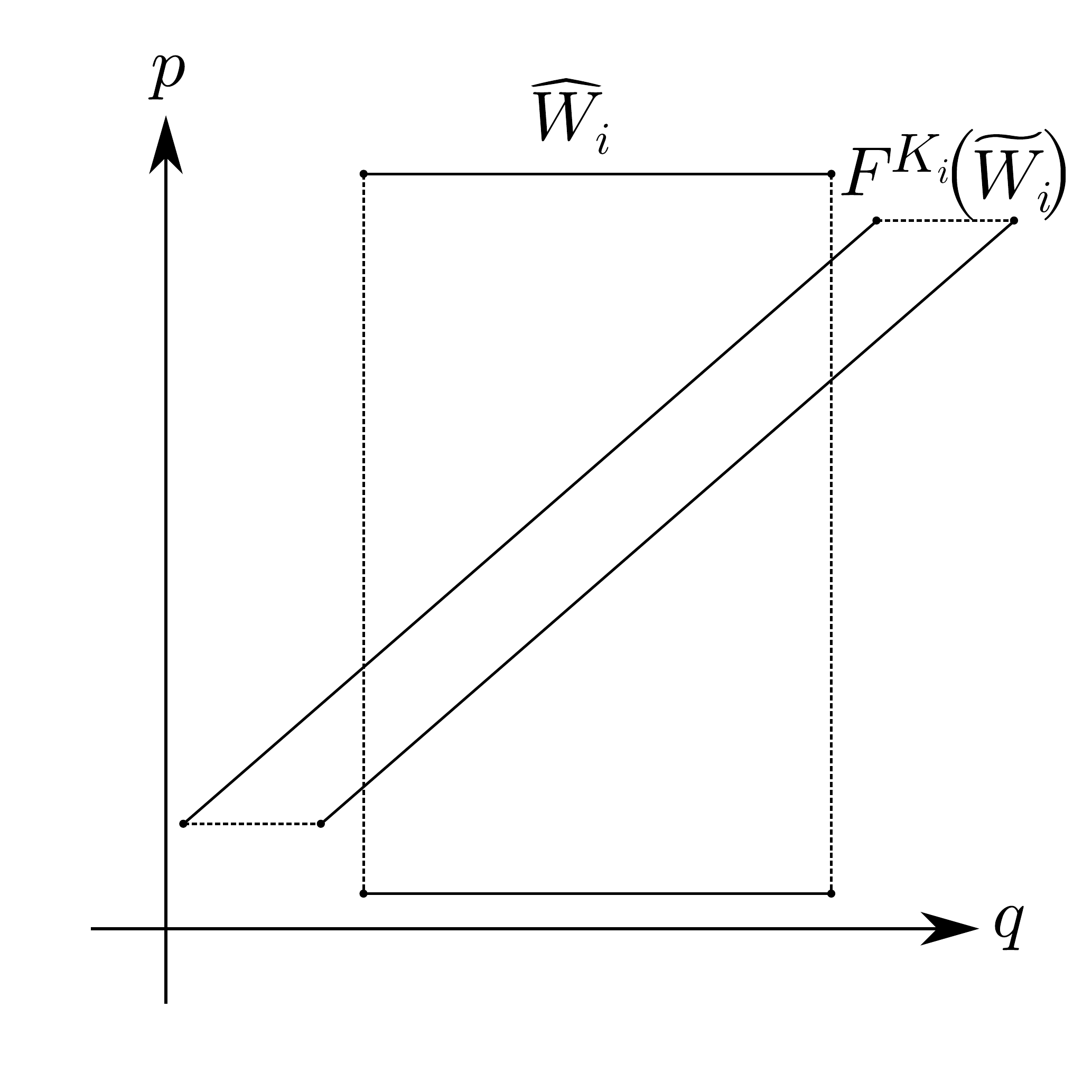}
        \caption{}
        \label{figure_step2}
    \end{subfigure}
    \caption{Step 2. The dashed edges represent exit sets.}\label{figure_step2}
\end{figure}

Notice that the exit set of $\widehat{W}_i$ is not in the same direction as the exit set of $\widetilde{W}_i$. Under the iteration $F^{K_i}$, the window $\widetilde{W}_i$ is deformed, by \eqref{eq_normalhyperbolicitydef} and Lemma \ref{lemma_shearingestimates}, as follows:
\begin{itemize}
\item
In the $s$-direction, $\widetilde{W}_i$ is contracted by $F^{K_i}$ to a size between $\tilde{\alpha}_i \lambda_-^{K_i}$ and $\tilde{\alpha}_i \lambda_+^{K_i}$;
\item
In the $u$-direction, $\widetilde{W}_i$ is expanded by $F^{K_i}$ to a size between $\tilde{\beta}_i \mu_-^{K_i}$ and $\tilde{\beta}_i \mu_+^{K_i}$;
\item
In the $q$-direction, $\widetilde{W}_i$ is sheared by the twist map to a size at least $\epsilon^{\tau} K_i T_- \tilde{\delta}_i - K_i R \tilde{\delta}_i^2 - \tilde{\gamma}_i - C K_i \epsilon^k$; and
\item
In the $p$-direction, the change is $\pm C K_i \epsilon^k$.
\end{itemize}
If we choose the sizes of the constituent rectangles of $\widehat{W}_i$ to be
\begin{equation}
\left| \widehat{S}_i \right| = \hat{\alpha}_i, \quad \left| \widehat{U}_i \right| = \hat{\beta}_i, \quad \left| \widehat{Q}_i \right| = \hat{\gamma}_i, \quad \left| \widehat{P}_i \right| = \hat{\delta}_i
\end{equation}
where
\begin{align}
\hat{\alpha}_i \hphantom{^-} >& \hphantom{^-} \tilde{\alpha}_i \lambda_+^{K_i}, \label{eq_step2_ineqs1} \\
\hat{\beta}_i \hphantom{^-} <& \hphantom{^-} \tilde{\beta}_i \mu_-^{K_i}, \label{eq_step2_ineqs2} \\
\hat{\gamma}_i \hphantom{^-} <& \hphantom{^-} \epsilon^{\tau} K_i T_- \tilde{\delta}_i - K_i R \tilde{\delta}_i^2 - \tilde{\gamma}_i - C K_i^2 \epsilon^k, \label{eq_step2_ineqs3} \\
\hat{\delta}_i \hphantom{^-} >& \hphantom{^-} \tilde{\delta}_i + C K_i \epsilon^k, \label{eq_step2_ineqs4}
\end{align}
then $\widetilde{W}_i$ is correctly aligned with $\widehat{W}_i$ under $F^{K_i}$ by Theorem \ref{theorem_cawproduct}. In Section \ref{section_proof1_part3} we explain how to choose $\tilde{\alpha}_i, \tilde{\beta}_i, \tilde{\gamma}_i, \tilde{\delta}_i$ and $K_i$ so that inequalities \eqref{eq_step2_ineqs1}, \eqref{eq_step2_ineqs2}, \eqref{eq_step2_ineqs3}, \eqref{eq_step2_ineqs4} are solvable.

\subsubsection{Step 3}

Let $x_{i+1} \in \Gamma$ be a point in the homoclinic channel for which there are $y_i^u \in \L_i$ and $y_{i+1}^s \in \L_{i+1}$ such that $\pi^u(x_{i+1}) = y_i^u$ and $\pi^s(x_{i+1}) = y_{i+1}^s$. Define a window $W_i'$ centred at $x_{i+1}$ and given in $(s^-,u^-,q^-,p^-)$ coordinates by 
\begin{equation}
W_i' = \left[ S_i' \times U_i' \right] \times \left[ Q_i' \times P_i' \right]
\end{equation}
where we choose the exit sets to be
\begin{equation}
\left[ S_i' \times U_i' \right]^- = \hphantom{^-} S_i' \times \partial U_i', \quad \left[ Q_i' \times P_i' \right]^- = \hphantom{^-} \partial Q_i' \times P_i'.
\end{equation}
Suppose we choose the sizes of the constituent rectangles of $W_i'$ to be
\begin{equation}
\left| S_i' \right| = \alpha_i', \quad \left| U_i' \right| = \beta_i', \quad \left| Q_i' \right| = \gamma_i', \quad \left| P_i' \right| = \delta_i'.
\end{equation}
We take an iterate $F^{M_i} \left(\widehat{W}_i \right)$ of $\widehat{W}_i$ for some $M_i \in \mathbb{N}$, and require that $F^{M_i} \left(\widehat{W}_i \right)$ is correctly aligned with $W_i'$ under $F^{M_i}$. Observe that this is equivalent to $W_i'$ being correctly aligned with $\widehat{W}_i$ under $F^{-M_i}$, but with the roles of entry and exit sets in the definition of correctly aligned windows reversed. 

\begin{figure}[h]
    \centering
    \begin{subfigure}[b]{0.45\textwidth}
        \includegraphics[width=\textwidth]{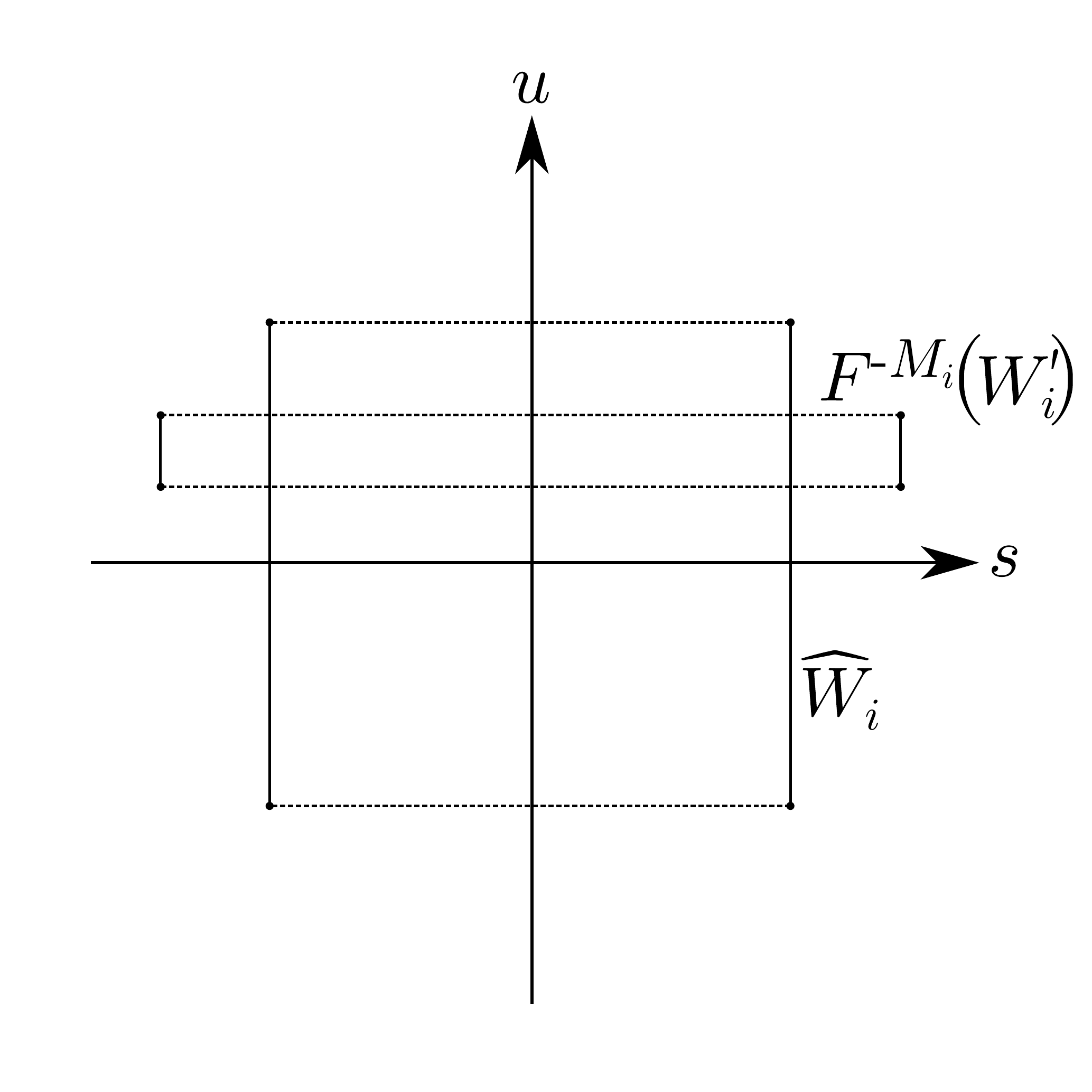}
        \caption{}
        \label{figure_step3}
    \end{subfigure} 
    \begin{subfigure}[b]{0.45\textwidth}
        \includegraphics[width=\textwidth]{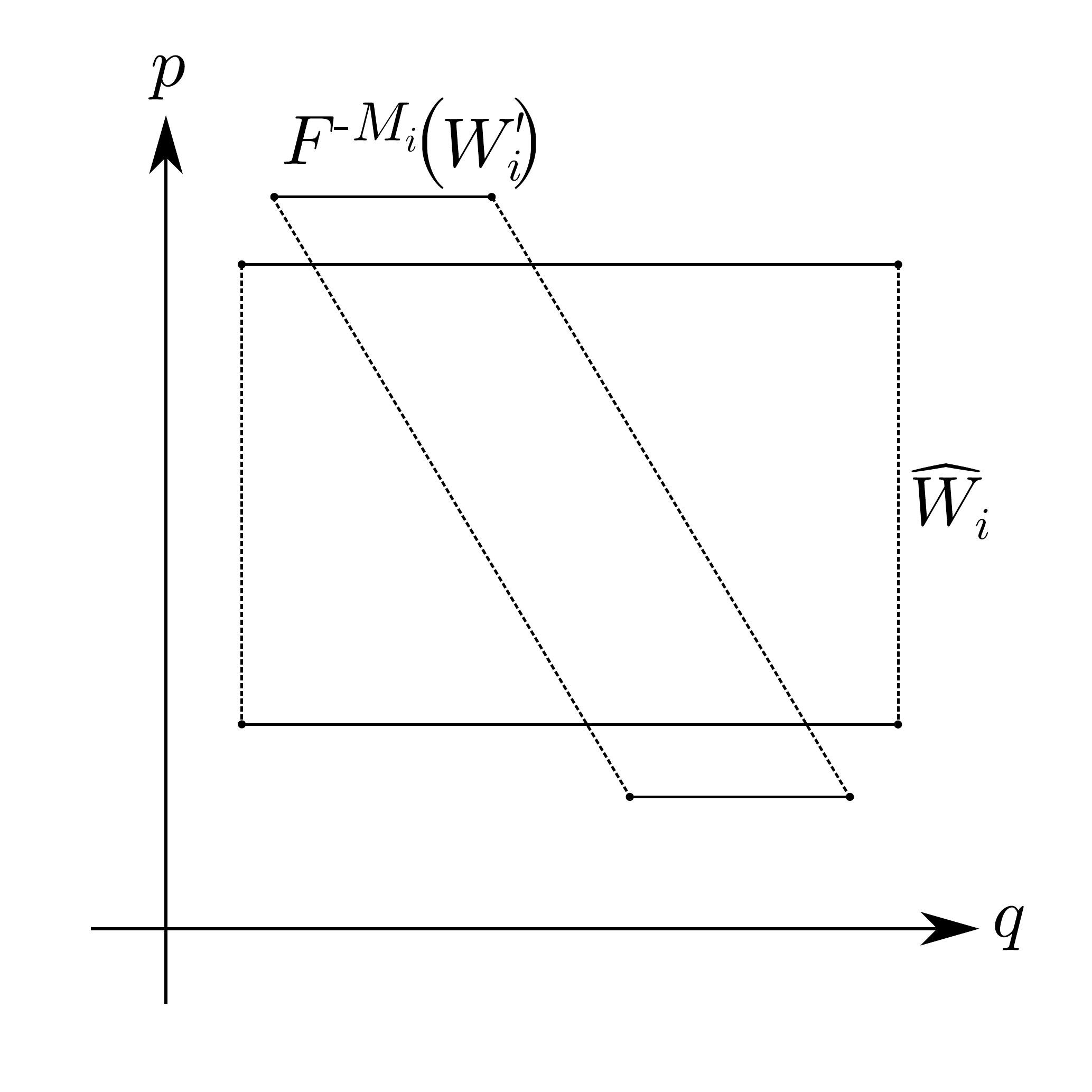}
        \caption{}
        \label{figure_step2}
    \end{subfigure}
    \caption{Step 3. The dashed edges represent exit sets.}\label{figure_step3}
\end{figure}

We can write $y_i^u = (q_i^u,p_i^u)$ and $f^{N_i+K_i}(y_i^s) = (q_i^s,p_i^s)$. Let 
\begin{equation} \label{eq_omegadistancedef}
\omega_i' = \left\| q_i^s - q_i^u \right\|, \quad \nu_i' = d_{W^u(\Lambda)} (x_{i+1},y_i^u)
\end{equation}
where $d_{W^u (\Lambda)}$ is the distance measured along the unstable manifold. Then
\begin{equation}
\nu_i' \mu_+^{-M_i} \leq d_{W^u(\Lambda)} (F^{-M_i}(x_{i+1}), F^{-M_i}(y_i^u)) \leq \nu_i' \mu_-^{-M_i},
\end{equation}
and moreover, by \eqref{eq_normalhyperbolicitydef} and Lemma \ref{lemma_shearingestimates}:
\begin{itemize}
\item
In the $s$-direction, $W_i'$ is expanded by $F^{-M_i}$ to a size between $\alpha_i' \lambda_+^{-M_i}$ and $\alpha_i' \lambda_-^{-M_i}$;
\item
In the $u$-direction, $W_i'$ is contracted by $F^{-M_i}$ to a size between $\beta_i'  \mu_+^{-M_i}$ and $\beta_i'  \mu_-^{-M_i}$;
\item
In the $q$-direction, $W_i'$ is sheared by the twist map to a size at most $\gamma_i' + M_i T_+ \delta_i' + C M_i \epsilon^k$; and
\item
In the $p$-direction, the change is $\pm C M_i \epsilon^k$.
\end{itemize}
If we choose $\alpha_i', \beta_i', \gamma_i', \delta_i'$ such that
\begin{align}
\alpha_i' \lambda_+^{-M_i} \hphantom{^-} >& \hphantom{^-} \hat{\alpha}_i, \label{eq_step3_ineqs1} \\
\left( \beta_i' + 2 \nu_i' \right) \mu_-^{-M_i} \hphantom{^-} <& \hphantom{^-} \hat{\beta}_i, \label{eq_step3_ineqs2} \\
\gamma_i' + M_i T_+ \delta_i' + C M_i^2 \epsilon^k + 2 \omega_i' \hphantom{^-} <& \hphantom{^-} \hat{\gamma}_i, \label{eq_step3_ineqs3} \\
\delta_i' - CM_i \epsilon^k \hphantom{^-} >& \hphantom{^-} \hat{\delta}_i, \label{eq_step3_ineqs4}
\end{align}
then $\widehat{W}_i$ is correctly aligned with $W_i'$ under $F^{M_i}$ by Theorem \ref{theorem_cawproduct}. In Section \ref{section_proof1_part3} we explain how to choose $\hat{\alpha}_i, \hat{\beta}_i, \hat{\gamma}_i, \hat{\delta}_i$ and $M_i$ so that inequalities \eqref{eq_step3_ineqs1}, \eqref{eq_step3_ineqs2}, \eqref{eq_step3_ineqs3}, \eqref{eq_step3_ineqs4} are solvable.

\begin{remark}
With regards to the factor of 2 carried both by $\nu_i'$ in \eqref{eq_step3_ineqs2} and by $\omega_i'$ in \eqref{eq_step3_ineqs3}, compare Remark \ref{remark_whynuhas2} with Figure \ref{figure_step3}.
\end{remark}

\subsection{Continuing the Sequence Across a Transverse Homoclinic Intersection}\label{sec_coordtransfhomoclinic}

So far, we have shown how to construct short sequences of windows $W_i, \widetilde{W}_i, \widehat{W}_i, W_i'$ beginning at the transverse homoclinic point $x_i$, passing near $\L_i$, and ending at another transverse homoclinic point $x_{i+1}$. We now want to show that this short sequence can be connected to the next short sequence $W_{i+1}, \widetilde{W}_{i+1}, \widehat{W}_{i+1}, W_{i+1}'$, which amounts to proving that $W_i'$ is correctly aligned with $W_{i+1}$ under the identity mapping. Both of these windows are centred at $x_{i+1}$, but the difficulty is that $W_i'$ is defined in the $(s^-,u^-,q^-,p^-)$ coordinates obtained by carrying the $(s,u,q,p)$ coordinates from a neighbourhood of $\L_i$ out along $W^u (\Lambda)$ to a neighbourhood of $x_{i+1}$, whereas $W_{i+1}$ is defined in the $(s^+,u^+,q^+,p^+)$ coordinates obtained by carrying the $(s,u,q,p)$ coordinates from a neighbourhood of $\L_{i+1}$ back along $W^s (\Lambda)$ to a neighbourhood of $x_{i+1}$. 

Define the window $W_{i+1}$ centred at $x_{i+1}$ and given in $(s^+ ,u^+,q^+,p^+)$ coordinates by 
\begin{equation}
W_{i+1} = \left[ S_{i+1} \times U_{i+1} \right] \times \left[ Q_{i+1} \times P_{i+1} \right]
\end{equation}
where the exit sets are
\begin{equation}
\left[ S_{i+1} \times U_{i+1} \right]^- =  S_{i+1} \times \partial U_{i+1}, \quad \left[ Q_{i+1} \times P_{i+1} \right]^- = Q_{i+1} \times \partial P_{i+1}
\end{equation}
and where the sizes of the constituent rectangles are
\begin{equation}
\left| S_{i+1} \right| = \alpha_{i+1}, \quad \left| U_{i+1} \right| = \beta_{i+1}, \quad \left| Q_{i+1} \right| = \gamma_{i+1}, \quad \left| P_{i+1} \right| = \delta_{i+1}.
\end{equation}
The following lemma gives conditions under which $W_i'$ is correctly aligned with $W_{i+1}$ under the identity mapping. 

\begin{lemma}\label{lemma_coordtransf}
There are nonnegative constants $C_j$ for $j=1, \ldots, 8$ where $C_4, C_7 >0$ and a constant $R' >0$ such that, if 
\begin{align}
\alpha_{i+1} >& \hphantom{_{i+1}} C_1 \epsilon^{\sigma} \alpha_i' + C_2 \beta_i' + R' \zeta_i^2, \label{eq_coordchange1}\\
\beta_{i+1} < & -C_3 \alpha_i' + C_4 \epsilon^{\sigma} \beta_i' - R' \zeta_i^2, \label{eq_coordchange2}\\
\gamma_{i+1} >& \hphantom{_{i+1}} C_5 \gamma_i' + C_6 \epsilon^{\upsilon} \delta_i' + R' \zeta_i^2, \label{eq_coordchange3}\\
\delta_{i+1} <& \hphantom{_{i+1}} C_7 \epsilon^{\upsilon} \gamma_i' - C_8 \delta_i' - R' \zeta_i^2, \label{eq_coordchange4}
\end{align}
where
\begin{equation}\label{eq_zetaidef}
\zeta_i = \max \left\{ \alpha_i', \beta_i', \gamma_i', \delta_i' \right\}
\end{equation}
then $W_i'$ is correctly aligned with $W_{i+1}$ under the identity mapping. 
\end{lemma}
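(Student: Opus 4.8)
The plan is to reduce the statement to an estimate on the transition map between the two coordinate charts at $x_{i+1}$, and then to exploit the product structure of the windows together with the stability of correct alignment. Write $\Theta = h^+ \circ (h^-)^{-1}$; this is a $C^2$ diffeomorphism between neighbourhoods of $h^-(x_{i+1})$ and $h^+(x_{i+1})$ in $\widehat{\U}$. Since $W_i'$ is parametrised through $(h^-)^{-1}$ and $W_{i+1}$ through $(h^+)^{-1}$, Definition \ref{definition_caw} applied with $f = \mathrm{id}$ shows that $W_i'$ is correctly aligned with $W_{i+1}$ under the identity if and only if the coordinate window $[S_i' \times U_i'] \times [Q_i' \times P_i']$ is correctly aligned with $[S_{i+1} \times U_{i+1}] \times [Q_{i+1} \times P_{i+1}]$ under $\Theta$. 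I would then Taylor expand $\Theta$ about $x_{i+1}$, writing $\Theta = \Theta(x_{i+1}) + L\,(\,\cdot\, - x_{i+1}) + \mathcal{R}$ with $L = D\Theta(x_{i+1})$ and $\|\mathcal{R}\| \leq R' \zeta_i^2$ on a neighbourhood of size comparable to $\zeta_i$, where $R'$ bounds the second derivatives of $\Theta$. The quadratic remainder is exactly what produces the $R'\zeta_i^2$ terms in \eqref{eq_coordchange1}--\eqref{eq_coordchange4}, so the task reduces to verifying correct alignment under the linear map $L$, with enough slack left over to absorb $\mathcal{R}$ (either via Theorem \ref{theorem_cawstable} or by a direct check).

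The heart of the proof is the computation of $L$ in block form with respect to the splitting into the hyperbolic coordinates $(s,u)$ and the centre coordinates $(q,p)$. Three facts pin it down: $\Theta$ maps the copy of $W^u(\Lambda)$ in the $h^-$ chart (the hyperplane $\{s^-=0\}$) to the corresponding sheet in the $h^+$ chart; $\Theta$ carries strong unstable leaves to strong unstable leaves and $W^s(\Lambda)$ to $W^s(\Lambda)$; and, along the homoclinic channel, $\Theta$ intertwines the holonomies $\pi^u$, $\pi^s$ with the scattering map $S$, so that in the $(q,p)$ variables $\Theta|_{\Gamma}$ reads as $S$. Combined with the block form \eqref{eq_mapinappcoords} of the maps out of which $\Theta$ is built, these facts show that, after an adapted choice of frame, $L$ is block diagonal for the splitting (hyperbolic)$\,\oplus\,$(centre) up to terms that are absorbed into $R'\zeta_i^2$. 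In the hyperbolic block, $L$ is the change of basis relating a frame adapted to $T_{x_{i+1}}W^u(\Lambda)$ to one adapted to $T_{x_{i+1}}W^s(\Lambda)$; by the strong transversality hypothesis [A1] and the fact that the splitting angle along $\Gamma$ is bounded below by $C\epsilon^\sigma$, this block has entries of sizes $O(1)$ and $\epsilon^\sigma$ arranged as on the right-hand sides of \eqref{eq_coordchange1} and \eqref{eq_coordchange2}, the nondegenerate $\epsilon^\sigma$ entry (coefficient $C_4>0$) being the quantitative form of transversality. In the centre block, $L$ coincides with $DS$ at $\pi^u(x_{i+1})$ read in the $(q,p)$ coordinates; assumption [A3] --- that $S(\L(p_0^*))$ meets nearby leaves transversally with angle at least $C\epsilon^\upsilon$, together with the hypothesis that consecutive leaves $\L_j$, $\L_{j+1}$ are only $\epsilon^\upsilon$ apart --- dictates that the $q\to q$ and $p\to p$ entries are $O(1)$ while the $q\to p$ and $p\to q$ entries are $O(\epsilon^\upsilon)$, with the $q\to p$ entry bounded below by $C_7\epsilon^\upsilon$, $C_7>0$. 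This yields precisely the coefficient pattern of \eqref{eq_coordchange3} and \eqref{eq_coordchange4}.

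It then remains to assemble the conclusion. Because $W_i'$ and $W_{i+1}$ are products and $L$ is block compatible with the local product structure $M_{\mathrm{hyp}} \times M_{\mathrm{cen}}$, Theorem \ref{theorem_cawproduct} reduces the claim to two independent statements: $[S_i' \times U_i']$ linearly correctly aligned with $[S_{i+1} \times U_{i+1}]$ under the hyperbolic block, and $[Q_i' \times P_i']$ linearly correctly aligned with $[Q_{i+1} \times P_{i+1}]$ under the centre block. Each is checked directly from Definition \ref{definition_caw}: the exit directions of $W_i'$ are $u^-$ and $q^-$ and those of $W_{i+1}$ are $u^+$ and $p^+$, so the map whose Brouwer degree must be nonzero is the $(u^-,q^-)\mapsto(u^+,p^+)$ part of $L$, which is nondegenerate precisely because $C_4,C_7>0$; inequalities \eqref{eq_coordchange2} and \eqref{eq_coordchange4} express that the stretching produced in the exit directions $u^+$ and $p^+$ dominates the widths $\beta_{i+1}$, $\delta_{i+1}$ of $W_{i+1}$ plus the contamination coming from the entry directions $s^-$, $p^-$ of $W_i'$ plus the quadratic error, while inequalities \eqref{eq_coordchange1} and \eqref{eq_coordchange3} express that the image of $W_i'$ stays strictly inside $W_{i+1}$ in the entry directions $s^+$ and $q^+$. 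Finally Theorem \ref{theorem_cawstable} (equivalently, the slack $R'\zeta_i^2$ built into the inequalities) upgrades correct alignment under $L$ to correct alignment under $\Theta$, which is what was to be proved.

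The main obstacle is the second step: establishing the block structure and, above all, the correct $\epsilon$-scalings of $D\Theta(x_{i+1})$, so that the degeneracy coming from the small homoclinic splitting ($\epsilon^\sigma$, hyperbolic block) is disentangled from the degeneracy coming from the weak transversality of the scattering map ($\epsilon^\upsilon$, centre block) and neither contaminates the other. A secondary point is the control of $R'$: one must verify that the $C^2$ norm of $\Theta$ does not grow too fast as $\epsilon\to0$ --- it is governed by the $C^2$ regularity of the strong foliations and of the scattering map supplied by the large spectral gap condition \eqref{eq_hyperbolicityparameters2} --- since this is what makes the eventual choice of window sizes in Section \ref{section_proof1_part3} compatible with the averaging condition $k \geq 2(\rho+\tau)+1$.
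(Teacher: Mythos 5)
Your proposal follows essentially the same route as the paper's proof: identify the transition map $\phi = h^+\circ(h^-)^{-1}$ between the $(s^-,u^-,q^-,p^-)$ and $(s^+,u^+,q^+,p^+)$ charts at $x_{i+1}$, Taylor-expand about $x_{i+1}$ with second-order remainder bounded by $R'\zeta_i^2$, exhibit the block structure of $D\phi(x_{i+1})$ with the $\epsilon^{\sigma}$ scaling in the hyperbolic $m\times m$ blocks coming from [A1] and the $\epsilon^{\upsilon}$ scaling in the centre $n\times n$ blocks coming from [A3] (which is precisely \eqref{eq_coordtransflinearisation}--\eqref{eq_coordtransflinearisationcomps} in the paper), and then read off the inequalities \eqref{eq_coordchange1}--\eqref{eq_coordchange4} as the condition that the exit directions $u^+,p^+$ are stretched past $\beta_{i+1},\delta_{i+1}$ while the entry directions $s^+,q^+$ stay inside $\alpha_{i+1},\gamma_{i+1}$. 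The only cosmetic difference is in the final assembly: you reduce to linear correct alignment of the two factors and invoke Theorem \ref{theorem_cawproduct} together with the stability/slack argument, whereas the paper estimates the quantities $\Delta^h_j,\Omega^h_j,\Delta^c_k,\Omega^c_k$ directly and checks alignment from those bounds; the two are equivalent here, and both make the $R'\zeta_i^2$ slack do the work of absorbing the nonlinear remainder.
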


\begin{proof}
Recall we assume that the stable and unstable manifolds $W^{s,u} \left( \Lambda \right)$ have equal dimension $m+2n$, where we denote by $m$ the dimension of the $s, u$ variables, and by $n$ the dimension of the $q,p$ variables. By [A1], the angle between $s^-$ and $u^+$ at $x_{i+1}$ is of order $\epsilon^{\sigma}$, as is the angle between $u^-$ and $s^+$. Moreover, by [A3], the angle between $q^-$ and $q^+$ at $x_{i+1}$ is of order $\epsilon^{\upsilon}$, as is the angle between $p^-$ and $p^+$. It follows that there is a well-defined coordinate transformation $\phi : (s^-,u^-,q^-,p^-) \mapsto (s^+,u^+,q^+,p^+)$ in a neighbourhood of $x_{i+1}$, and the linearisation of $\phi$ at $x_{i+1}$ is of the form
\begin{equation}\label{eq_coordtransflinearisation}
D \phi (x_{i+1})= \left(
\begin{matrix}
A & 0 \\
0 & B \\
\end{matrix}
\right)
\end{equation}
where 
\begin{equation}\label{eq_coordtransflinearisationcomps}
A = \left(
\begin{matrix}
\epsilon^{\sigma} A_1  & A_2 \\
A_3 & \epsilon^{\sigma} A_4  \\
\end{matrix}
\right), \quad B = \left(
\begin{matrix}
B_1 & \epsilon^{\upsilon} B_2  \\
\epsilon^{\upsilon} B_3  & B_4 \\
\end{matrix}
\right).
\end{equation} 
Here each matrix $A_j$ is of dimension $m \times m$, each matrix $B_j$ is of dimension $n \times n$, and $A_1, A_4, B_2, B_3$ are invertible.

Recall the maximum norm $\| . \|$ is defined by \eqref{eq_maxnormdef}. Fix $R' >0$ with the following property. We can write
\begin{equation}\label{eq_coordtransftaylor}
\phi(x) = \phi(x_{i+1}) + D \phi (x_{i+1}) + R_*' (x)
\end{equation}
where $R_*'(x)$ is a remainder term of order $O \left( \left\|x - x_{i+1}  \right\|^2 \right)$. Then the constant $R'$ is chosen so that
\begin{equation}\label{eq_coordtransfremainderbound}
\left\| R_*'(x) \right\| \leq R' \left\| x - x_{i+1} \right\|^2
\end{equation}
for all $x$ in some neighbourhood of $x_{i+1}$. Notice moreover that, since $W_i'$ is centred at $x_{i+1}$, we have
\begin{equation}\label{eq_zetaiinequality}
\left\| x - x_{i+1} \right\| \leq \frac{\zeta_i}{2}
\end{equation}
for all $x \in W_i'$, where $\zeta_i$ is defined by \eqref{eq_zetaidef}. 

There are $a_i', b_i' \in \mathbb{R}^m$ and $c_i', d_i' \in \mathbb{R}^n$ such that
\begin{align}
S_i' ={}& \left[ a_{i,1}', a_{i,1}' + \alpha_i' \right] \times \cdots \times \left[ a_{i,m}', a_{i,m}' + \alpha_i' \right], \\
U_i' ={}& \left[ b_{i,1}', b_{i,1}' + \beta_i' \right] \times \cdots \times \left[ b_{i,m}', b_{i,m}' + \beta_i' \right], \\
Q_i' ={}& \left[ c_{i,1}', c_{i,1}' + \gamma_i' \right] \times \cdots \times \left[ c_{i,n}', c_{i,n}' + \gamma_i' \right], \\
P_i' ={}& \left[ d_{i,1}', d_{i,1}' + \delta_i' \right] \times \cdots \times \left[ d_{i,n}', d_{i,n}' + \delta_i' \right].
\end{align}
For each $j=1, \ldots, m$ define
\begin{align}
I_{j,0}^h ={}& \left[ b_{i,1}', b_{i,1}' + \beta_i' \right] \times \cdots \times \left\{ b_{i,j}' \right\} \times \cdots \times \left[ b_{i,m}', b_{i,m}' + \beta_i' \right], \\
I_{j,1}^h ={}& \left[ b_{i,1}', b_{i,1}' + \beta_i' \right] \times \cdots \times \left\{ b_{i,j}' + \beta_i' \right\} \times \cdots \times \left[ b_{i,m}', b_{i,m}' + \beta_i' \right],
\end{align}
and for each $k=1, \ldots, n$ define
\begin{align}
I_{k,0}^c ={}& \left[ c_{i,1}', c_{i,1}' + \gamma_i' \right] \times \cdots \times \left\{ c_{i,k}' \right\} \times \cdots \times \left[ c_{i,n}', c_{i,n}' + \gamma_i' \right], \\
I_{k,1}^c ={}& \left[ c_{i,1}', c_{i,1}' + \gamma_i' \right] \times \cdots \times \left\{ c_{i,k}' + \gamma_i' \right\} \times \cdots \times \left[ c_{i,n}', c_{i,n}' + \gamma_i' \right].
\end{align}
We then let
\begin{equation}
E_{j,0}^h= S_i' \times I^h_{j,0} \times Q_i' \times P_i', \quad E_{j,1}^h= S_i' \times I^h_{j,1} \times Q_i' \times P_i'
\end{equation}
be the corresponding components of the exit set of $W_i'$ in the hyperbolic directions, and let
\begin{equation}
E_{k,0}^c= S_i' \times U_i' \times I^c_{k,0} \times P_i', \quad E_{k,1}^c= S_i' \times U_i' \times I^c_{k,1} \times P_i'
\end{equation}
be the corresponding components of the exit set of $W_i'$ in the cylindrical directions. For each $\kappa \in \left\{ s,u,q,p \right\}$ denote by $\Pi_{\kappa} : (s,u,q,p) \mapsto \kappa$ the projection onto the $\kappa$ coordinate. Define
\begin{align}
\begin{split}
\Delta^h_j ={}& \min_{z_0 \in E_{j,0}^h} \min_{z_1 \in E_{j,1}^h} \left\| \Pi_u \circ \phi \left( z_1 \right) - \Pi_u \circ \phi \left( z_0 \right) \right\|, \label{eq_coordtransfdeltajh} 
\end{split} \\
\begin{split}
\Omega^h_j ={}& \max_{z_0 \in E_{j,0}^h} \max_{z_1 \in E_{j,1}^h} \left\| \Pi_s \circ \phi \left( z_1 \right) - \Pi_s \circ \phi \left( z_0 \right) \right\|, \label{eq_coordtransfomegajh} 
\end{split} \\
\begin{split}
\Delta^c_k ={}& \min_{z_0 \in E_{k,0}^c} \min_{z_1 \in E_{k,1}^c} \left\| \Pi_p \circ \phi \left( z_1 \right) - \Pi_p \circ \phi \left( z_0 \right) \right\|, \label{eq_coordtransfdeltakc} 
\end{split} \\
\begin{split}
\Omega^c_k ={}& \max_{z_0 \in E_{k,0}^c} \max_{z_1 \in E_{k,1}^c} \left\| \Pi_q \circ \phi \left( z_1 \right) - \Pi_q \circ \phi \left( z_0 \right) \right\|. \label{eq_coordtransfomegack}  
\end{split}
\end{align}
If we choose 
\begin{equation} \label{eq_windowiplus1aspectratios}
\alpha_{i+1} > \Omega_j^h, \quad \beta_{i+1} < \Delta_j^h, \quad \gamma_{i+1} > \Omega_k^c, \quad \delta_{i+1} < \Delta_k^c
\end{equation}
for each $j=1, \ldots, m$ and each $k = 1, \ldots, n$ then $W_i'$ is correctly aligned with $W_{i+1}$ under the map $\phi$ (see Figure \ref{figure_coordchange}). Therefore, in what follows, we search for upper bounds for each $\Omega^h_j, \Omega^c_k$ and lower bounds for each $\Delta^h_j, \Delta^c_k$. In fact, we compute bounds for $\Omega^h_1, \Omega^c_1$ and $\Delta^h_1, \Delta^c_1$ as the estimates for other values of $j,k$ are analogous. 

\begin{figure}
    \centering
    \begin{subfigure}[b]{0.45\textwidth}
        \includegraphics[width=\textwidth]{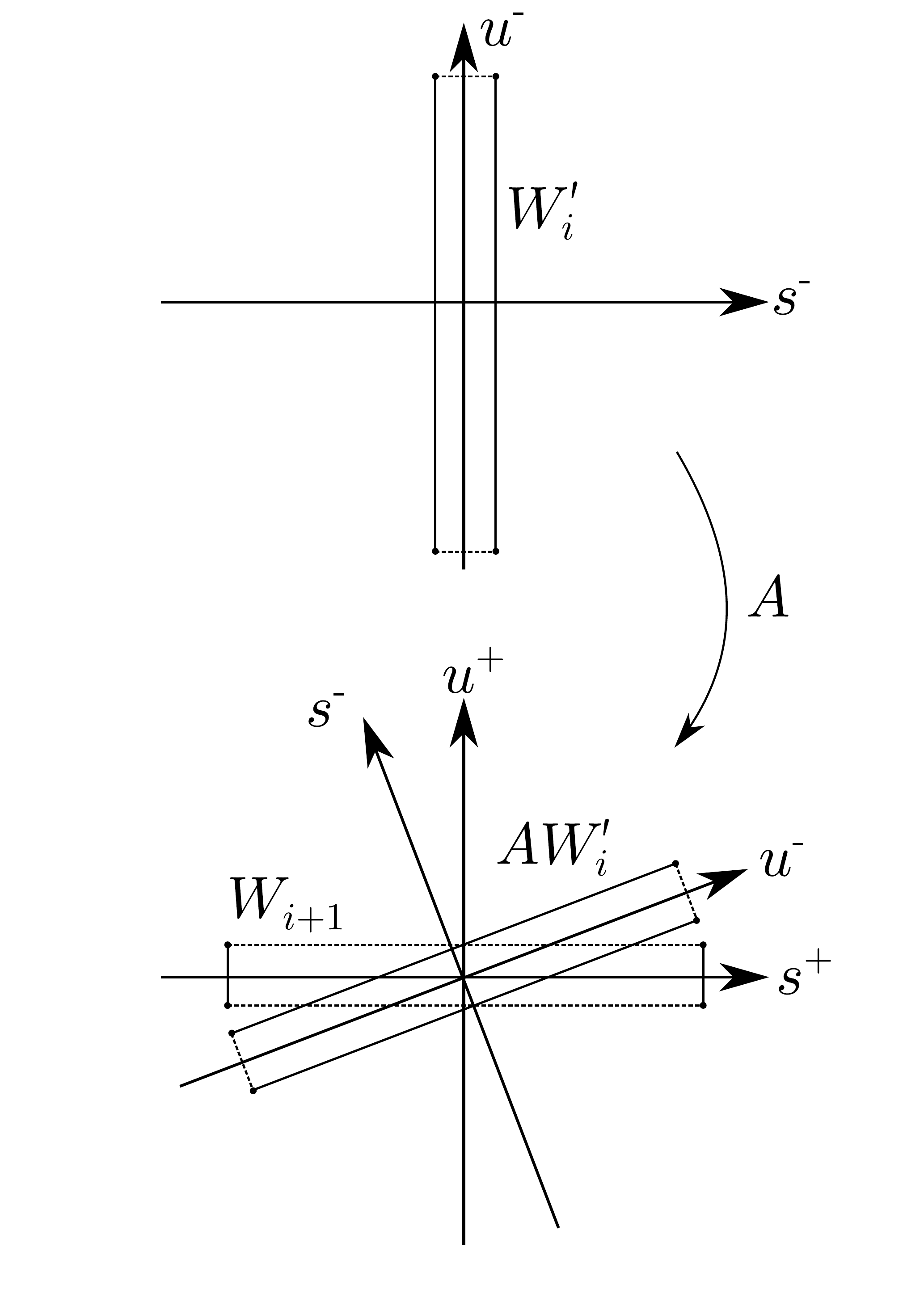}
        \caption{}
        \label{figure_coordchange}
    \end{subfigure} 
    \begin{subfigure}[b]{0.45\textwidth}
        \includegraphics[width=\textwidth]{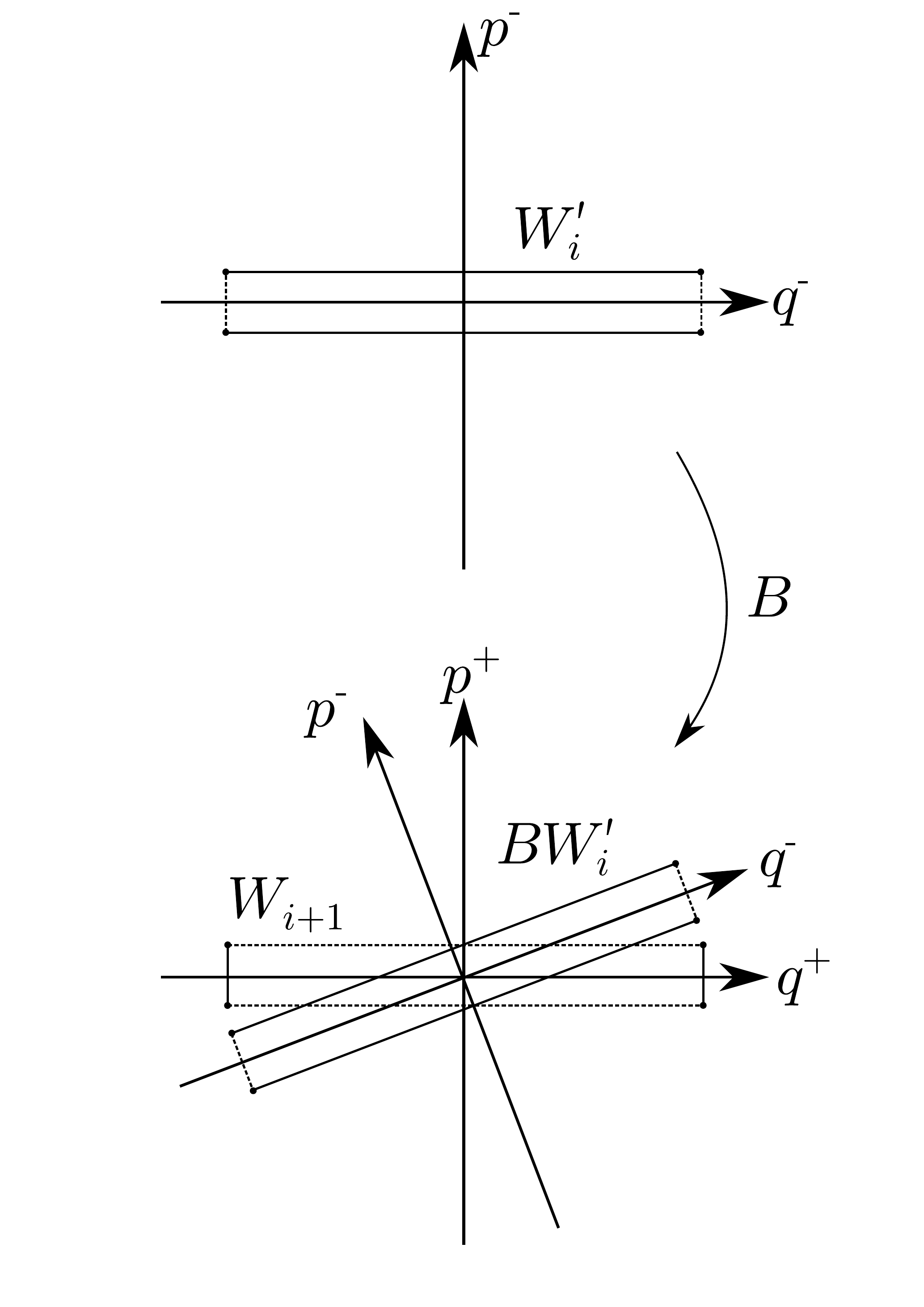}
        \caption{}
        \label{figure_coordchange}
    \end{subfigure}
    \caption{The figure illustrates how the window $W_i'$ is transformed under the linearisation $D \phi \left( x_{i+1} \right)$ of the change of coordinates $\phi : (s^-,u^-,q^-,p^-) \mapsto (s^+,u^+,q^+,p^+)$ at the homoclinic point $x_{i+1}$. In (a), we see how the hyperbolic rectangle is transformed assuming [A1], i.e. that there is an angle of order $\epsilon^{\sigma}$ between the stable and unstable manifolds at the homoclinic point $x_{i+1}$. In (b), we see how the inner (cylindrical) rectangle is transformed assuming [A3], i.e. that the angle between the image of a leaf $\L_i$ of the foliation under the scattering map and another leaf $\L_{i+1}$ is of order $\epsilon^{\upsilon}$. Note that the rectangles are not centred at the origin. The dashed edges of the rectangles represent the exit sets.}\label{figure_coordchange}
\end{figure}

For $j=0,1$ let $s_j^h \in S_i'$, $q_j^h \in Q_i'$, $p_j^h \in P_i'$, and
\begin{equation}
u_j^* \in \left[ b_{i,2}', b_{i,2}' + \beta_i' \right] \times \cdots \times \left[ b_{i,m}', b_{i,m}' + \beta_i' \right].
\end{equation} 
Define $u^h_0 = \left( b_{i,1}', u_0^* \right)$, $u^h_1 = \left( b_{i,1}' + \beta_i', u_1^* \right)$, and $z_j^h = \left( s_j^h, u_j^h, q_j^h, p_j^h \right)$ for $j=0,1$. Observe that all points in $E_{1,j}^h$ are of the form $z_j^h$. Moreover, notice that
\begin{equation}\label{eq_hypucompdiff}
\left\| u_1^h - u_0^h \right\| = \beta_i'. 
\end{equation}
Due to \eqref{eq_coordtransflinearisation} and \eqref{eq_coordtransflinearisationcomps}, the hyperbolic part of $D \phi \left( x_{i+1} \right) \left( z_1^h - z_0^h \right)$ is
\begin{equation}\label{eq_coordtransfhyperbolicpart}
\left(
\begin{matrix}
\epsilon^{\sigma} A_1  & A_2 \\
A_3 & \epsilon^{\sigma} A_4  
\end{matrix}
\right) \left(
\begin{matrix}
s_1^h - s_0^h \\
u_1^h - u_0^h
\end{matrix}
\right) = \left(
\begin{matrix}
\epsilon^{\sigma} A_1 \left(s_1^h - s_0^h \right) + A_2 \left( u_1^h - u_0^h \right) \\
A_3 \left(s_1^h - s_0^h \right) + \epsilon^{\sigma} A_4 \left( u_1^h - u_0^h \right)
\end{matrix}
\right). 
\end{equation}
It follows from \eqref{eq_coordtransftaylor}, \eqref{eq_coordtransfremainderbound}, \eqref{eq_zetaiinequality}, \eqref{eq_hypucompdiff}, and \eqref{eq_coordtransfhyperbolicpart} that
\begin{align}
\left\| \Pi_s \circ \phi \left( z_1^h \right) - \Pi_s \circ \phi \left( z_0^h \right) \right\| \leq{}& \left\| \Pi_s \left( D \phi \left( x_{i+1} \right) \left( z_1^h - z_0^h \right) \right) \right\| \\
& \qquad + R' \left( \left\| z_1^h - x_{i+1} \right\|^2 + \left\| z_0^h - x_{i+1} \right\|^2 \right) \\
\leq& \left\| \epsilon^{\sigma} A_1 \left(s_1^h - s_0^h \right) + A_2 \left( u_1^h - u_0^h \right) \right\| + \frac{1}{2} R' \zeta_i^2 \\
\leq& C_1 \epsilon^{\sigma} \alpha_i' + C_2 \beta_i' + R' \zeta_i^2
\end{align}
and
\begin{align}
\left\| \Pi_u \circ \phi \left( z_1^h \right) - \Pi_u \circ \phi \left( z_0^h \right) \right\| \geq{}& \left\| \Pi_u \left( D \phi \left( x_{i+1} \right) \left( z_1^h - z_0^h \right) \right) \right\| \\
& \qquad - R' \left( \left\| z_1^h - x_{i+1} \right\|^2 + \left\| z_0^h - x_{i+1} \right\|^2 \right) \\
\geq{}& \left\| A_3 \left(s_1^h - s_0^h \right) + \epsilon^{\sigma} A_4 \left( u_1^h - u_0^h \right) \right\| - \frac{1}{2} R' \zeta_i^2 \\
\geq{}& - \left\| A_3 \left(s_1^h - s_0^h \right) \right\| + \epsilon^{\sigma} \left\| A_4 \left(u_1^h - u_0^h \right) \right\| - R' \zeta_i^2 \\
\geq{}& - C_3 \alpha_i' + C_4 \epsilon^{\sigma} \beta_i' - R' \zeta_i^2
\end{align}
where $\zeta_i$ is defined by \eqref{eq_zetaidef}, where $C_j$ is the matrix norm of $A_j$ for $j=1,2,3$, and where $C_4>0$ is such that $\left\| A_4 v \right\| \geq C_4 \left\| v \right\|$ for all $v \in \mathbb{R}^m$. That we can choose such a strictly positive $C_4$ follows from the invertibility of $A_4$. Combining the previous inequalities with \eqref{eq_coordtransfdeltajh}, \eqref{eq_coordtransfomegajh}, and \eqref{eq_windowiplus1aspectratios} yields \eqref{eq_coordchange1} and \eqref{eq_coordchange2}. 

Now, for $j=0,1$, let $s_j^c \in S_i'$, $u_j^c \in U_i'$, $p_j^c \in P_i'$, and
\begin{equation}
q_j^* \in \left[ c_{i,2}', c_{i,2}' + \gamma_i' \right] \times \cdots \times \left[ c_{i,n}', c_{i,n}' + \gamma_i' \right].
\end{equation}
Define $q_0^c = \left( c_{i,1}', q_0^* \right)$, $q_1^c = \left( c_{i,1}' + \gamma_i', q_1^* \right)$, and $z_j^c = \left( s_j^c, u_j^c, q_j^c, p_j^c \right)$ for $j=0,1$. Then all points in $E_{1,j}^c$ are of the form $z_j^c$, and furthermore
\begin{equation}\label{eq_cylqcompdiff}
\left\| q_1^c - q_0^c \right\| = \gamma_i'
\end{equation}
Due to \eqref{eq_coordtransflinearisation} and \eqref{eq_coordtransflinearisationcomps}, the cylindrical part of $D \phi \left( x_{i+1} \right) \left( z_1^c - z_0^c \right)$ is
\begin{equation}\label{eq_coordtransfcylindricalpart}
\left(
\begin{matrix}
B_1 & \epsilon^{\upsilon} B_2  \\
\epsilon^{\upsilon} B_3  & B_4 \\
\end{matrix}
\right) \left(
\begin{matrix}
q_1^c - q_0^c \\
p_1^c - p_0^c \\
\end{matrix}
\right) = \left(
\begin{matrix}
B_1 \left( q_1^c - q_0^c \right) + \epsilon^{\upsilon} B_2 \left( p_1^c - p_0^c \right) \\
\epsilon^{\upsilon} B_3 \left( q_1^c - q_0^c \right) + B_4 \left( p_1^c - p_0^c \right)\\
\end{matrix}
\right).
\end{equation}
It follows from \eqref{eq_coordtransftaylor}, \eqref{eq_coordtransfremainderbound}, \eqref{eq_zetaiinequality}, \eqref{eq_cylqcompdiff}, and \eqref{eq_coordtransfcylindricalpart} that
\begin{align}
\left\| \Pi_q \circ \phi \left( z_1^c \right) - \Pi_q \circ \phi \left( z_0^c \right) \right\| \leq{}& \left\| \Pi_q \left( D \phi \left( x_{i+1} \right) \left( z_1^c - z_0^c \right) \right) \right\| \\
& \qquad + R' \left( \left\| z_1^c - x_{i+1} \right\|^2 + \left\| z_0^c - x_{i+1} \right\|^2 \right) \\
\leq& \left\|  B_1 \left(q_1^c - q_0^c \right) + \epsilon^{\upsilon} B_2 \left( p_1^c - p_0^c \right) \right\| + \frac{1}{2} R' \zeta_i^2 \\
\leq& C_5 \gamma_i' + C_6 \epsilon^{\upsilon} \delta_i' + R' \zeta_i^2
\end{align}
and
\begin{align}
\left\| \Pi_p \circ \phi \left( z_1^c \right) - \Pi_p \circ \phi \left( z_0^c \right) \right\| \geq{}& \left\| \Pi_p \left( D \phi \left( x_{i+1} \right) \left( z_1^c - z_0^c \right) \right) \right\| \\
& \qquad - R' \left( \left\| z_1^c - x_{i+1} \right\|^2 + \left\| z_0^c - x_{i+1} \right\|^2 \right) \\
\geq{}& \left\| \epsilon^{\upsilon} B_3 \left(q_1^c - q_0^c \right) + B_4 \left( p_1^c - p_0^c \right) \right\| - \frac{1}{2} R' \zeta_i^2 \\
\geq{}& \epsilon^{\upsilon}  \left\| B_3 \left(q_1^c - q_0^c \right) \right\| - \left\| B_4 \left(p_1^c - p_0^c \right) \right\| - R' \zeta_i^2 \\
\geq{}& C_7 \epsilon^{\upsilon} \gamma_i' - C_8 \delta_i' - R' \zeta_i^2
\end{align}
where $\zeta_i$ is defined by \eqref{eq_zetaidef}, where $C_j$ is the matrix norm of $B_j$ for $j=5,6,8$, and where $C_7>0$ is such that $\left\| B_3 v \right\| \geq C_7 \left\| v \right\|$ for all $v \in \mathbb{R}^n$. As above, the reason that we can choose $C_7$ to be strictly positive is due to the invertibility of $B_3$. The previous inequalities combined with \eqref{eq_coordtransfdeltakc}, \eqref{eq_coordtransfomegack}, and \eqref{eq_windowiplus1aspectratios} give \eqref{eq_coordchange3} and \eqref{eq_coordchange4}. 
\end{proof}

\subsection{Construction of Long Sequences of Correctly Aligned Windows}\label{section_proof1_part3}

In this section we show how to choose the aspect ratios of the windows at each step (see Table \ref{table_aspectratiosorder} for a summary) so that the process can be continued indefinitely, completing the proof of Theorem \ref{theorem_main1}.

Fix $\eta > 0$ as in the statement of Theorem \ref{theorem_main1}. Let us first show how to choose $\alpha_i', \beta_i', \gamma_i', \delta_i'$ so that the inequalities of Lemma \ref{lemma_coordtransf} are solvable; in particular, we require the right-hand side of inequalities \eqref{eq_coordchange2} and \eqref{eq_coordchange4} to be positive. Define $\kappa = \max \left\{ \sigma, \upsilon \right\}$ and suppose we choose
\begin{equation}
\alpha_i' = \epsilon^{2 \kappa} \alpha_*, \quad \beta_i' = \gamma_i' = \epsilon^{\kappa} \zeta_*, \quad  \delta_i' = \epsilon^{2 \kappa} \delta_*
\end{equation}
where
\begin{equation}
\begin{dcases}
0 < \zeta_* < \frac{1}{R'} \min \left\{ C_4, C_7 \right\}, \\
0 < \alpha_* < \min \left\{ \zeta_*, \frac{C_4}{C_3} \zeta_* \left( 1 - \frac{R'}{C_4} \zeta_* \right) \right\}, \\
0 < \delta_* < \min \left\{ \zeta_*, \frac{C_7}{C_8} \zeta_* \left( 1 - \frac{R'}{C_7} \zeta_* \right) \right\}.
\end{dcases}
\end{equation}
It follows that $\beta_i' = \gamma_i' = \zeta_i$, where $\zeta_i$ is defined by \eqref{eq_zetaidef}, and so the right-hand side of \eqref{eq_coordchange2} is 
\begin{equation}
- C_3 \alpha_i' + C_4 \epsilon^{\sigma} \beta_i' - R' \left( \beta_i' \right)^2 \geq \epsilon^{2 \kappa} \left( C_4 \beta_* - C_3 \alpha_* - R' \beta_*^2 \right) > 0.
\end{equation}
Similarly, the right-hand side of \eqref{eq_coordchange4} is
\begin{equation}
C_7 \epsilon^{\upsilon} \gamma_i' - C_8 \delta_i' - R' \left( \gamma_i' \right)^2 \geq \epsilon^{2 \kappa} \left( C_7 \gamma_* - C_8 \delta_* - R' \gamma_*^2 \right) > 0.
\end{equation}
Therefore we can choose $\beta_{i+1}, \delta_{i+1} >0$ of order $O \left( \epsilon^{2 \kappa} \right)$, and $\alpha_{i+1}, \gamma_{i+1} > 0$ of order $O \left(\epsilon^{\kappa}\right)$ (or larger, but not smaller) so that inequalities \eqref{eq_coordchange1}, \eqref{eq_coordchange2}, \eqref{eq_coordchange3}, \eqref{eq_coordchange4} are satisfied. Moreover, we choose
\begin{equation}
\alpha_{i+1}, \beta_{i+1}, \delta_{i+1} < \eta. 
\end{equation}
Notice that $\beta_{i+1}, \delta_{i+1}$ can be chosen as small as we like. 

\begin{table}
\begin{center}
\begin{tabular}{|c|c|c|c|c|c|}
\hline
& ``  '' & $\tilde{}$ & $\hat{}$ & $'$ \\
\hline
$\alpha$ & 1 & 1 & $ \epsilon^{2 \kappa}$ & $\epsilon^{2 \kappa}$ \\[0.5em]
$\beta$ & $ \epsilon^{2 \kappa}$ & $\epsilon^{2 \kappa}$ & 1 & $\epsilon^{\kappa}$ \\[0.5em]
$\gamma$ & $\epsilon^{\kappa}$ & $\epsilon^{\kappa}$ & 1 & $\epsilon^{\kappa}$ \\[0.5em]
$\delta$ & $\epsilon^{2 \kappa}$ & $\epsilon^{\rho}$ & $\epsilon^{2 \kappa}$ & $\epsilon^{2 \kappa}$ \\[0.5em]
\hline 
\end{tabular}
\caption{\label{table_aspectratiosorder}The order of the size of each rectangle at each step in the proof. In addition, we choose the iterates $N_i, M_i = O(1)$ and $K_i = O(\epsilon^{-\rho - \tau})$ to be sufficiently large. }
\end{center}
\end{table}

The next major constraint when choosing the aspect ratios is that $\hat{\gamma}_i$ must be chosen in \eqref{eq_step2_ineqs3} to be of order $O(1)$, and we must be able to choose it as close to 1 as we like, for the following reason. By assumption, there is a point $z \in \L_i$ such that $S (z) \in \L_{i+1}$, and $S \left( U \cap \L_i \right)$ is transverse to $\L_{i+1}$ at $S(z)$, where $S: U \subseteq \Lambda \to \Lambda$ is a branch of the scattering map. It is essential that $z \in \widehat{W}_i$, and so we must be able to choose $\widehat{W}_i$ as wide as necessary in the $q$-direction. This requirement can be seen mathematically in \eqref{eq_step3_ineqs3}: if $\hat{\gamma}_i$ does not dominate $M_i T_+ \delta_i' + C M_i^2 \epsilon^k + 2 \omega_i'$, then $\gamma_i'$ cannot be chosen to be positive. 

Suppose we have $\tilde{\delta}_i = \epsilon^{\rho} \delta_*$ where $\rho, \delta_*$ are to be determined. It follows from \eqref{eq_step1_ineqs4} that $\tilde{\delta}_i = O \left( \epsilon^{2 \kappa} \right)$, and so
\begin{equation}\label{eq_rhogequpsilon}
\rho \geq 2 \kappa.
\end{equation}
Moreover, the first two terms on the right-hand side of \eqref{eq_step2_ineqs3} are
\begin{equation} \label{eq_largetwistexplanation}
K_i \epsilon^{\rho + \tau} \delta_* \left( T_- - R \delta_* \epsilon^{\rho - \tau} \right).
\end{equation}
Since we require that this is positive, we must have 
\begin{equation}\label{eq_rhogeqtau}
\rho \geq \tau
\end{equation}
and $\delta_* < \frac{T_-}{R}$. It follows from \eqref{eq_rhogequpsilon} and \eqref{eq_rhogeqtau} that the value $\rho = \max \{2 \sigma, 2 \upsilon, \tau \}$ defined in \eqref{eq_rhodef} suffices.

Now, for the right-hand side of \eqref{eq_step2_ineqs3} to be positive we require that \eqref{eq_largetwistexplanation} dominates $\tilde{\gamma}_i + C K_i^2 \epsilon^k$, which is true whenever $K_i = O \left(\epsilon^{-\rho - \tau} \right)$ is sufficiently large, where we have used \eqref{eq_kavgcondition}. Due to \eqref{eq_kavgcondition}, the inequality \eqref{eq_step2_ineqs4} is also solvable for $\hat{\delta}_i = O(\epsilon^{2 \kappa})$. Therefore we can again choose $\delta_i'>0$ in \eqref{eq_step3_ineqs4} to be of order $O\left(\epsilon^{2 \kappa}\right)$. Since we can choose $\tilde{\delta}_i$ to be as small as we like (due to \eqref{eq_step1_ineqs4}), we can subsequently choose each $\hat{\delta}_i, \delta_i'$ in \eqref{eq_step2_ineqs4}, \eqref{eq_step3_ineqs4} respectively to be as small as required. 

If we choose $M_i = O(1)$, then \eqref{eq_step3_ineqs4} is solvable. Let us explain how to find $\hat{\gamma}_i$ satisfying \eqref{eq_step3_ineqs3}. If we define $\mathbb{T} = \mathbb{R} / \mathbb{Z}$, then $\omega_i' \leq \frac{1}{2}$, where $\omega_i'$ is defined in \eqref{eq_omegadistancedef}. We may assume, by slightly shifting $y_i^u$ if necessary (to some point $\tilde{y}_i^u$ such that $S \left(\tilde{y}_i^u \right)$ lies in a sufficiently small $O(\eta)$-neighbourhood of $\L_{i+1}$), that $\omega_i' <\frac{1}{2}$. Notice that, by increasing $K_i$ and shrinking $\tilde{\gamma}_i$ if necessary, we can choose $\hat{\gamma}_i$ to be as close to 1 as required. In this way we can find some positive $\gamma_i' = O(1)$ so that the left-hand side of \eqref{eq_step3_ineqs3} is less than 1. Therefore \eqref{eq_step3_ineqs3} is solvable, and we can choose $\gamma_i' = O\left(\epsilon^{\kappa}\right)$.

Suppose we choose $N_i = O(1)$. Then, by \eqref{eq_step1_ineqs1}, $\tilde{\alpha}_i$ is of order 1. Since $K_i = O \left( \epsilon^{- \rho - \tau} \right)$ and since $\lambda_+ \in (0,1)$, we can choose $\hat{\alpha}_i$ to be of order $\epsilon^{2 \kappa}$ in \eqref{eq_step2_ineqs1}. Since $M_i = O (1)$, we can in turn choose $\alpha_i'$ to be of order $\epsilon^{2 \kappa}$ due to \eqref{eq_step3_ineqs1}. Moreover, shrinking $\hat{\alpha}_i$ in \eqref{eq_step2_ineqs1} allows us to shrink $\alpha_i'$ in \eqref{eq_step3_ineqs1} if necessary.
 
Now, since $\beta_i$ is of order $O\left( \epsilon^{2 \kappa}\right)$, so too is $\tilde{\beta}_i$ due to \eqref{eq_step1_ineqs2}. Due to \eqref{eq_step2_ineqs2}, since $K_i = O \left( \epsilon^{- \rho - \tau} \right)$ and since $\mu_- >1$, we can choose $\hat{\beta}_i$ of order 1. Finally, choosing $M_i = O(1)$ large enough, we can ensure that $\hat{\beta}_i - 2 \nu_i' \mu_-^{-M_i} >0$ so that \eqref{eq_step3_ineqs2} is solvable by a positive choice of $\beta_i' = O \left( \epsilon^{\kappa} \right)$.  

We have thus shown that these choices can be made consistently. Combining this construction with Theorem \ref{theorem_onecansee} implies the existence of a trajectory $\{ z_i \}$ as in Theorem \ref{theorem_main1}. Since we choose $M_i, N_i$ to be of order $O(1)$, $K_i$ of order $O \left(\epsilon^{- \rho - \tau} \right)$, the time taken to move from a neighbourhood of $x_i$ to a neighbourhood of $x_{i+1}$ is
\begin{equation}
N_i + K_i + M_i = O \left(\epsilon^{- \rho - \tau} \right).
\end{equation}
In order to move a distance of order $1$ in the $p$-direction, we must choose $N$ consecutive leaves of the foliation connected by the scattering map where $N=O(\epsilon^{-\upsilon})$, and so the time is of order $\epsilon^{-\rho - \tau - \upsilon}$. This concludes the proof of Theorem \ref{theorem_main1}.

\section{Proof of Theorem \ref{theorem_main2}}\label{sec_proofthm2}

Let the notation be as in the statement of Theorem \ref{theorem_main2}, and fix $\eta > 0$. Denote by $\widehat{\Sigma} = \mathbb{R}^{\ell_1} \times [0,1]^{\ell_2}$ the universal cover of $\Sigma$. Suppose we have lifted the dynamics to the covering space $\widehat{M} = M \times \widehat{\Sigma}$ of $\widetilde{M}$. For convenience, we do not change the notation of the lifted mappings. Fix $p^*_1, \ldots, p^*_N \in [0,1]^n$ and $\xi^*_1 \in \mathrm{Int} \left( [0,1]^{\ell_2} \right)$ as in the statement of Theorem \ref{theorem_main2}. Let $F= \widetilde{G} (\cdot; \xi^*_1) \in \Diff^4 (M)$. By [B1], $F$ satisfies the assumptions of Theorem \ref{theorem_main1}. Therefore, by the proof of Theorem \ref{theorem_main1}, there are windows $\widetilde{W}_1, \ldots, \widetilde{W}_N \subset M$ and $n_j \in \mathbb{N}$ such that 
\begin{equation}
d \left( z,  \L_j \right) < \frac{\eta}{2}
\end{equation}
for all $z \in \widetilde{W}_j$, and $\widetilde{W}_j$ is correctly aligned with $\widetilde{W}_{j+1}$ under $F^{n_j}$. For convenience we drop the tilde notation and write simply $W_j$. By Theorem \ref{theorem_cawstable}, there is a neighbourhood $\V$ of $F$ in $\Diff^4 (M)$ such that $W_j$ is correctly aligned with $W_{j+1}$ under $\widetilde{F}^{n_j}$ for each $j=1, \ldots, N-1$ and each $\widetilde{F} \in \V$. Moreover, our assumptions on $F$ imply that there is a $K>0$ such that the neighbourhood $\V$ is of order $\epsilon^K$, in the sense that there is some $R>0$ independent of $\epsilon$ such that the ball of radius $R \epsilon^K$ centred at $F$ in $\Diff^4 (M)$ is contained in $\V$. Therefore there is some $a^*>0$ (that may depend on $\epsilon$) such that if we define
\begin{equation}
\Xi^* = \left[ \xi^*_{1,1} - a^*, \xi^*_{1,1} + a^* \right] \times \cdots \times \left[ \xi^*_{1,\ell_2} - a^*, \xi^*_{1,\ell_2} + a^* \right] \subset \mathrm{Int} \left( [0,1]^{\ell_2} \right),
\end{equation}
where $\xi_1^* = \left( \xi_{1,1}^*, \ldots, \xi_{1,\ell_2,}^* \right)$, then $\widetilde{G} ( \cdot ; \xi) \in \V$ for all $\xi \in \Xi^*$. Moreover, if we choose $L \in \mathbb{N}$ large enough, we may assume that $G( \cdot, \theta, \xi) = \widetilde{G} ( \cdot ; \xi) + O(\epsilon^L) \in \V$ for all $\theta \in \mathbb{R}^{\ell_1}, \xi \in \Xi^*$. 

For each $j=2, \ldots, N$ let
\begin{equation}
\Omega_j = \sum_{k=1}^{j-1} n_k.
\end{equation}
Since $L \in \mathbb{N}$ is sufficiently large, we can find positive constants $C_j$ (independent of $\epsilon$) such that, if we let $\Xi_1 = \{ \xi_1^* \}$, and
\begin{equation}
\Xi_j = \left[ \xi_{1,1}^* - C_j \Omega_j \epsilon^L, \xi_{1,1}^* + C_j \Omega_j \epsilon^L \right] \times \cdots \times \left[ \xi_{1,\ell_2}^* - C_j \Omega_j \epsilon^L, \xi_{1,\ell_2}^* + C_j \Omega_j \epsilon^L \right]
\end{equation}
for $j=2, \ldots, N$, then
\begin{equation}
\Xi_j \subset \Xi^*
\end{equation}
for each $j=1, \ldots, N$, and the $\xi$ component of $\Psi^{n_j}(z, \theta, \xi)$ lies in $\mathrm{Int} \left( \Xi_{j+1} \right)$ for each $z \in W_j, \theta \in \mathbb{R}^{\ell_1}, \xi \in \Xi_j$. 

Choose any $\zeta_1^{\pm} \in \mathbb{R}$ such that $\zeta_1^- < \zeta_1^+$, and consider the rectangle
\begin{equation}
\Theta_1 = \left[ \zeta_1^-, \zeta_1^+ \right]^{\ell_1} \subset \mathbb{R}^{\ell_1}. 
\end{equation}
For each $j=2, \ldots, N$ we can find $\zeta_j^{\pm} \in \mathbb{R}$ such that $\zeta_j^- < \zeta_j^+$, and such that if 
\begin{equation}
\Theta_j = \left[ \zeta_j^-, \zeta_j^+ \right]^{\ell_1},
\end{equation}
then the $\theta$ component of $\Psi^{n_j}(z, \theta, \xi)$ lies in $\mathrm{Int} (\Theta_{j+1})$ for each $z \in W_j, \theta \in \Theta_j, \xi \in [0,1]^{\ell_2}$.

Now, for each $j=1, \ldots, N$ choose the entry and exit sets of the rectangles $\Theta_j, \Xi_j$ to be
\begin{equation}
\Theta_j^+ = \partial \Theta_j, \quad \Theta_j^- = \emptyset, \quad \Xi_j^+ = \partial \Xi_j, \quad \Xi_j^- = \emptyset.
\end{equation}
Define 
\begin{equation}
\W_j = W_j \times \Theta_j \times \Xi_j \subset \widehat{M}
\end{equation}
where the entry and exit sets $\W_j^{\pm}$ are given by the product formula \eqref{eq_productentryexit}. Clearly the sets $\W_j$ are windows. By construction, $\W_j$ is correctly aligned with $\W_{j+1}$ under $\Psi^{n_j}$ for each $j=1, \ldots, N$ since the error terms are small (by assumption [B1]), and
\begin{equation} \label{eq_hatwindowsneartori}
d \left( (z, \theta, \xi), \widetilde{\L}_j \right) < \eta
\end{equation}
for each $(z, \theta, \xi) \in \W_j$ if $L$ is sufficiently large, where $\widetilde{\L}_j = \widetilde{\L}(p^*_j, \xi^*_j)$ with $p^*_1, \ldots, p^*_N$ and $\xi^*_1$ as chosen earlier, and for some $\xi^*_2, \ldots, \xi^*_N \in [0,1]^{\ell_2}$. Therefore, combining \eqref{eq_hatwindowsneartori} and Theorem \ref{theorem_onecansee}, we see that there are $w_1, \ldots, w_N \in \widehat{M}$ such that
\begin{equation}
w_{j+1} = \Psi^{n_j} (w_j)
\end{equation}
and
\begin{equation}
d \left(w_j, \widetilde{\L}_j \right) < \eta. 
\end{equation}
Moreover the time estimate \eqref{eq_timeestimate2} follows from the time estimate \eqref{eq_timeestimate1}.

\bibliographystyle{abbrv}
\bibliography{caw_diffusion_refs} 

\end{document}